\def\R{\mathbb{R}}
\def\Z{\mathbb{Z}}
\def\N{\mathbb{N}}
\def\C{\mathbb{C}}
\def\k{\textit {\textbf k}}
\newcommand\cR{\mathcal R}
\newcommand\Ec{{E_{\rm c}}}
\newcommand\cN{\mathcal N}
\newcommand\cI{\mathcal I}
\newcommand\cG{\mathcal G}
\newcommand\cU{\mathcal U}
\newcommand\cS{\mathcal S}
\newcommand\tr{{\rm Tr} \,}
\newcommand\dps{\displaystyle }
\newtheorem{theorem}{Theorem}[section]
\newtheorem{lemma}[theorem]{Lemma}
\newtheorem{e-proposition}[theorem]{Proposition}
\newtheorem{e-definition}[theorem]{Definition\rm}
\newtheorem{remark}{Remark} 
\def\qed{\relax
     \ifmmode
       ~\hfill\Box
     \else
        \unskip\nobreak ~\hfill$\Box$
      \fi \par}
\newtheorem{Proof}{Proof}
\newenvironment{proof}{\begin{Proof}\rm}{\qed\end{Proof}}
\begin{document}

\title{Numerical analysis of the planewave discretization of
  some orbital-free and Kohn-Sham models}
\author{Eric Canc\`es\footnote{Universit\'e Paris-Est, CERMICS,
    Project-team Micmac, INRIA-Ecole des Ponts,  6 \& 8 avenue Blaise
    Pascal, 77455 Marne-la-Vall\'ee Cedex 2, France.}, Rachida
  Chakir\footnote{UPMC Univ Paris 06, UMR 7598 LJLL, Paris, F-75005 France ;
CNRS, UMR 7598 LJLL, Paris, F-75005 France} 
$\,$ and Yvon Maday$^\dag$\footnote{Division of Applied Mathematics, Brown
  University, Providence, RI, USA} } 
%
%
\maketitle

\begin{abstract}
We provide {\it a priori} error estimates for the spectral and
pseudospectral Fourier (also called planewave) discretizations of the
periodic Thomas-Fermi-von Weizs\"acker (TFW) model and for the spectral
discretization of the Kohn-Sham
model, within the local density approximation (LDA). These models
allow to compute approximations of the ground state energy and density
of molecular systems in the condensed phase. The TFW model is stricly
convex with respect to the electronic density, and allows for a
comprehensive analysis. This is not the case for the Kohn-Sham LDA
model, for which the uniqueness of the ground state electronic density
is not guaranteed. Under a coercivity assumption on the second order
optimality condition, we prove that for large enough energy cut-offs,
the discretized Kohn-Sham LDA problem has a minimizer in the
vicinity of any Kohn-Sham ground state, and that this minimizer is unique up to
unitary transform. We then derive optimal  {\it a priori} error estimates
for the spectral discretization method.
\end{abstract}

\selectlanguage{english}
\section{Introduction}
\label{sec:introduction}

Density Functional Theory (DFT) is a powerful method
for computing ground state electronic energies and
densities in quantum chemistry, materials science, molecular biology and
nanosciences. The models originating from DFT can be classified into two
categories: the orbital-free models and the Kohn-Sham models. 
The Thomas-Fermi-von Weizs\"acker (TFW) model falls into the first
category. It is not very much used in practice, but is
interesting from a mathematical viewpoint~\cite{BenguriaBrezisLieb,CattoLeBrisLions2,Lieb}. It indeed serves as a toy
model for the analysis of the more complex electronic structure models
routinely used by Physicists and Chemists. At the other extremity of the
spectrum, the Kohn-Sham models \cite{DreizlerGross,KohnSham} are among the most widely used models in
Physics and Chemistry, but are much more difficult to deal with. We focus
here on the numerical analysis of the TFW model on the one hand, and of
the Kohn-Sham model, within the local density approximation (LDA), on
the other hand. More precisely, we are interested in the spectral and
pseudospectral Fourier, more commonly called planewave, discretizations
of the periodic versions of these two models. In this context, the
simulation domain, sometimes referred to as the supercell, is the unit cell
of some periodic lattice of $\R^3$. In the TFW model, periodic boundary
conditions 
(PBC) are imposed to the density; in the Kohn-Sham framework, they are
imposed to the Kohn-Sham orbitals (Born-von Karman PBC). Imposing
PBC at the boundary of the simulation cell is a standard method to
compute condensed phase properties with a limited number of atoms in the
simulation cell, hence at a moderate computational cost.
 
This article is organized as follows. In Section~\ref{sec:Fourier}, we
briefly introduce the functional setting used in the formulation and
the analysis of the planewave discretization of orbital-free and
Kohn-Sham models. In Section~\ref{sec:TFW}, we provide {\it a priori}
error estimates for the planewave discretization of the TFW model, including numerical integration. In
Section~\ref{sec:KS}, we deal with the Kohn-Sham LDA model.

\section{Basic Fourier analysis for planewave discretization methods}
\label{sec:Fourier}

Throughout this article, we denote by $\Gamma$ the simulation cell, by
$\cR$ the periodic lattice, and by
$\cR^\ast$ the dual lattice. For
simplicity, we assume that $\Gamma=[0,L)^3$  ($L > 0$), in which case 
$\cR$ is the cubic lattice $L\Z^3$, and $\cR^\ast = \frac{2\pi}L \Z^3$.
Our arguments can be easily extended to the general case. For $k \in
\cR^\ast$, we denote by $e_k(x)=|\Gamma|^{-1/2} \, e^{ik\cdot x}$ the
planewave with wavevector $k$. The family $(e_k)_{k \in \cR^\ast}$
forms an orthonormal basis of 
$$
L^2_\#(\Gamma,\C):=\left\{ u \in L^2_{\rm loc}(\R^3,\C) \; | \; u \mbox{
    $\cR$-periodic} \right\},
$$
and for all $u \in L^2_\#(\Gamma,\C)$,
$$
u(x) = \sum_{k \in \cR^\ast} \widehat u_k \, e_k(x) \qquad \mbox{with} \qquad
\widehat u_k=(e_k,u)_{L^2_\#} = |\Gamma|^{-1/2} \int_\Gamma u(x)
e^{-ik\cdot x} \, dx.
$$
In our analysis, we will mainly consider real valued functions. We
therefore introduce the Sobolev spaces of real valued $\cR$-periodic functions 
$$
H^s_\#(\Gamma) :=\left\{ u(x) =  \sum_{k \in \cR^\ast} \widehat u_k \,
  e_k(x) \; | \; \sum_{k \in \cR^\ast} (1+|k|^2)^s |\widehat
  u_k|^2 < \infty \mbox{ and } \forall k,  \; \widehat u_{-k}=\widehat u_k^\ast  \right\},
$$
$s \in \R$ (here and in the sequel $a^\ast$ denotes the complex conjugate of the complex number $a$), endowed with the inner products
$$
(u,v)_{H^s_\#} =  \sum_{k \in \cR^\ast} (1+|k|^2)^s \,
\widehat u_k^\ast \, \widehat v_k.
$$ 
For $N_c \in \N$, we denote by
\begin{equation} 
\label{eq_1}
V_{N_c} = \left\{ \sum_{k \in \cR^\ast \, | \, |k| \le \frac{2\pi}L N_c} c_k
  e_k \; | \; \forall k, \; c_{-k}=c_k^\ast \right\}
\end{equation} 
(the constraints $c_{-k}=c_k^\ast$ imply that the functions of $V_{N_c}$
are real valued). For all $s \in \R$, and each $v
\in H^s_\#(\Gamma)$, the best approximation of $v$ in $V_{N_c}$ for {\it any}
$H^r_\#$-norm, $r \le s$, is
$$
\Pi_{N_c} v = \sum_{\k\in \cR^\ast \, | \, |\k| \le \frac{2\pi}L N_c}
\widehat v_k e_k. 
$$
The more regular $v$ (the regularity being measured in terms of the
Sobolev norms $H^r$), the faster the convergence of this truncated
series to $v$: for all real numbers $r$ and $s$ with $r \le s$, we
have for each $v\in H^s_\#(\Gamma)$,
\begin{eqnarray}
\|v - \Pi_{N_c}v\|_{H^r_\#} = \min_{v_{N_c}
  \in V_{N_c}} \|v-v_{N_c}\|_{H^r_\#} & \le & 
\left( \frac{L}{2\pi} \right)^{s-r} \, N_c^{-(s-r)} \|v-
\Pi_{N_c}v\|_{H^s_\#} 
\nonumber \\
&\le & 
\left( \frac{L}{2\pi} \right)^{s-r} \, N_c^{-(s-r)} \|v\|_{H^s_\#}.
\label{eq:app-Fourier}
\end{eqnarray}
For $N_g \in \N \setminus \left\{0\right\}$, we denote by 
$\widehat{\phi}^{{\rm FFT},N_g}$ the discrete Fourier transform 
on the carterisan grid $\cG_{N_g}:=\frac{L}{N_g} \, \Z^3$ of the
function $\phi \in C^0_\#(\Gamma,\C)$, where
$$
 C^0_\#(\Gamma,\C) :=\left\{ u \in C^0(\R^3,\C) \; | \; u \mbox{
    $\cR$-periodic} \right\}.
$$
Recall that
if $\phi = \sum_{k \in \cR^\ast} \widehat \phi_k \, e_k \in C^0_\#(\Gamma,\C)$,
the discrete Fourier transform of $\phi$ is the $N_g\cR^\ast$-periodic
sequence $\widehat{\phi}^{{\rm FFT},N_g}=(\widehat{\phi}^{{\rm FFT},N_g}_{k})_{k
  \in \cR^\ast}$ where  
$$
\widehat{\phi}^{{\rm FFT},N_g}_{k}
= \frac{1}{N_g^3} \sum_{x \in \cG_{N_g} \cap \Gamma} \phi(x) e^{-ik
  \cdot x} = |\Gamma|^{-1/2} \sum_{K \in \cR^\ast} \widehat \phi_{k+N_gK}.
$$
We now introduce the subspaces 
$$
W_{N_g}^{\rm 1D}  \; = \; \left| \begin{array}{lll}
\dps \mbox{Span} \left\{ e^{ily} \; | \; l \in \frac{2\pi}L \Z, \; |l| \le 
\frac{2\pi}L \left( \frac{N_g-1}2 \right) \right\}
 & \quad  (N_g \mbox{ odd}), \\
 \dps \mbox{Span} \left\{ e^{ily} \; | \; l \in \frac{2\pi}L \Z, \; |l| \le 
\frac{2\pi}L \left( \frac{N_g}2 \right) \right\} \oplus \C
(e^{i \pi N_gy/L}+e^{-i \pi N_gy/L}) 
& \quad  (N_g \mbox{ even}),
\end{array} \right.
$$
($W_{N_g}^{\rm 1D} \in C^\infty_\#([0,L),\C)$ and $\mbox{dim}(W_{N_g}^{\rm
  1D}) = N_g$), and 
$W_{N_g}^{\rm 3D} = W_{N_g}^{\rm 1D} \otimes W_{N_g}^{\rm 1D} \otimes
W_{N_g}^{\rm 1D}$.
Note that $W_{N_g}^{\rm 3D}$ is a subspace of $H^s_\#(\Gamma,\C)$ of dimension
$N_g^3$, for all $s \in \R$, and that if $N_g$ is odd,
$$
W_{N_g}^{\rm 3D} =  \mbox{Span} \left\{ e_k \; | \; k \in \cR^\ast =
  \frac{2\pi}L \Z^3, \;
  |k|_\infty \le  \frac{2\pi}L \left( \frac{N_g-1}2 \right) \right\}
\qquad  \qquad  (N_g \mbox{ odd}).
$$
It is then possible to define
the interpolation projector $\cI_{N_g}$ from $C^0_\#(\Gamma,\C)$ onto
$W_{N_g}^{\rm 3D}$ by $[\cI_{N_g}(\phi)](x) = \phi(x)$ for all 
$x \in \cG_{N_g}$. It holds
\begin{equation} \label{eq:integration_formula_INg}
\forall \phi \in C^0_\#(\Gamma,\C), \quad 
\int_\Gamma \cI_{N_g}(\phi) =  \sum_{x \in \cG_{N_g} \cap \Gamma} \left(
  \frac{L}{N_g} \right)^3 \phi(x).
\end{equation}
The coefficients of the expansion of
$\cI_{N_g}(\phi)$ in the canonical 
basis of $W_{N_g}^{\rm 3D}$ is given by the discrete Fourier transform of
$\phi$. In particular, when $N_g$ is odd, we have the simple relation
$$
\cI_{N_g}(\phi)  =  
|\Gamma|^{1/2} \dps \sum_{k \in \cR^\ast \, | \, 
|k|_\infty \le \frac{2\pi}L \left( \frac{N_g-1}2 \right)} \widehat{\phi}^{{\rm
      FFT},N_g}_{k} \, e_k  \qquad \qquad  (N_g \mbox{ odd}).
$$
It is easy to check that if $\phi$ is real-valued, then so is
$\cI_{N_g}(\phi)$.

We will assume in the sequel that $N_g \ge 4N_c+1$. We will then have for
all $v_{4N_c} \in V_{4N_c}$, 
\begin{equation} \label{eq:exact_integration}
\int_\Gamma v_{4N_c} = \sum_{x \in \cG_{N_g} \cap \Gamma}
\left( \frac{L}{N_g} \right)^3 v_{4N_c}(x) = \int_\Gamma \cI_{N_g} (v_{4N_c}).
\end{equation}

The following lemma gathers some technical results which will be useful
for the numerical analysis of the planewave discretization of
orbital-free and Kohn-Sham models.

\begin{lemma} \label{lem:technical}
Let $N_c \in \N^\ast$ and $N_g \in \N^\ast$ such that $N_g \ge 4N_c+1$.
  \begin{enumerate}
  \item Let $V$ be a function of $C^0_\#(\Gamma,\C)$ and $v_{N_c}$
    and $w_{N_c}$ be two functions of $V_{N_c}$. Then 
\begin{eqnarray}
\int_\Gamma \cI_{N_g}(V v_{N_c}w_{N_c}) &=& 
\int_\Gamma \cI_{N_g}(V) v_{N_c} w_{N_c} ;  \label{eq:ineg_INg_2} \\
\left| \int_\Gamma \cI_{N_g}(V |v_{N_c}|^2) \right| &\le & \|V\|_{L^\infty} 
\|v_{N_c}\|_{L^2_\#}^2.  \label{eq:bound_INg1}
\end{eqnarray}

  \item Let $s > 3/2$, $0 \le r \le s$, and $V$ a function of
    $H^s_\#(\Gamma)$. Then,
\begin{eqnarray}
\left\| (1-\cI_{N_g})(V) \right\|_{H^r_\#} & \le &
C_{r,s} N_g^{-(s-r)} \|V\|_{H^s_\#};  \label{eq:ineg_INg_0} \\
\left\| \Pi_{2N_c}(\cI_{N_g}(V)) \right\|_{L^2_\#} & \le &
\left( \int_\Gamma \cI_{N_g}(|V|^2) \right)^{1/2};  \label{eq:ineg_INg_4} \\
\left\| \Pi_{2N_c}(\cI_{N_g}(V)) \right\|_{H^s_\#} & \le &
(1+C_{s,s}) \|V\|_{H^s_\#},  \label{eq:ineg_INg_5}
\end{eqnarray}
for constants $C_{r,s}$ independent of $V$. Besides if there exists $m
> 3$ and $C \in \R_+$ such that $|\widehat V_k| \le C |k|^{-m}$, then
there exists a constant $C_V$ independent of $N_c$ and $N_g$ such that
\begin{eqnarray}
\left\| \Pi_{2N_c}(1-\cI_{N_g})(V) \right\|_{H^r_\#} & \le & 
C_V N_c^{r+3/2} N_g^{-m}.  \label{eq:ineg_INg_6} 
\end{eqnarray}
 \item Let $\phi$ be a Borel function from
$\R_+$ to $\R$ such that there exists $C_\phi \in \R_+$ for which
$|\phi(t)| \le C_\phi (1+t^2)$ for all $t \in \R_+$. Then, for all $v_{N_c}
\in V_{N_c}$,
\begin{eqnarray}
\left| \int_\Gamma \cI_{N_g}(\phi(|v_{N_c}|^2)) \right| &\le & C_\phi \left(
  |\Gamma|+\|v_{N_c}\|_{L^4_\#}^4 \right). \label{eq:bound_INg2} 
\end{eqnarray}
  \end{enumerate}
\end{lemma}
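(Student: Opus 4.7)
The plan rests on two principles: the quadrature identity (\ref{eq:integration_formula_INg}), which rewrites $\int_\Gamma\cI_{N_g}(\phi)$ as a Riemann sum on $\cG_{N_g}$; and the aliasing formula, which identifies the Fourier coefficients of $\cI_{N_g}(V)$ inside the $W_{N_g}^{\rm 3D}$-window with $\sum_{K\in\cR^\ast}\widehat V_{k+N_gK}$ (and zero outside). The hypothesis $N_g\ge 4N_c+1$ enters in two ways: through the exact quadrature (\ref{eq:exact_integration}) on $V_{4N_c}$, and through the separation estimate $|k+(2\pi N_g/L)K'|\gtrsim (2\pi N_g/L)|K'|_\infty$ valid for $|k|_\infty\le 4\pi N_c/L$ and $K'\in\Z^3\setminus\{0\}$.

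For Part 1, the strategy for (\ref{eq:ineg_INg_2}) is to show that both sides equal the common quadrature sum $(L/N_g)^3\sum_{x\in\cG_{N_g}\cap\Gamma}V(x)v_{N_c}(x)w_{N_c}(x)$: the left-hand side directly via (\ref{eq:integration_formula_INg}), and the right-hand side by expanding $\cI_{N_g}(V)$ in the canonical basis of $W_{N_g}^{\rm 3D}$ and applying Plancherel, using that all Fourier modes of $v_{N_c}w_{N_c}\in V_{2N_c}$ lie inside the DFT window (precisely because $4N_c\le N_g-1$). Then (\ref{eq:bound_INg1}) follows at once: taking $w_{N_c}=v_{N_c}$, bounding $|V(x)|\le\|V\|_{L^\infty}$ pointwise, and recognising the residual sum as $\int_\Gamma|v_{N_c}|^2$ by exact quadrature on $V_{2N_c}\subset V_{4N_c}$.

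For Part 2, the inequality (\ref{eq:ineg_INg_0}) is the classical trigonometric-interpolation error bound: split the Fourier coefficients of $(1-\cI_{N_g})V$ into a tail part $\widehat V_k$ (for $k$ outside the $W_{N_g}^{\rm 3D}$-window) and an aliasing part $-\sum_{K\neq 0}\widehat V_{k+N_gK}$ (inside), then weight by $(1+|k|^2)^r$ and exploit $s>3/2$ to control the shifted tails. The bound (\ref{eq:ineg_INg_4}) follows from the discrete Parseval identity $\|\cI_{N_g}(V)\|_{L^2_\#}^2=\int_\Gamma\cI_{N_g}(|V|^2)$ combined with $\Pi_{2N_c}$ being an orthogonal projection. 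The bound (\ref{eq:ineg_INg_5}) comes by triangle inequality from $\Pi_{2N_c}\cI_{N_g}V=\Pi_{2N_c}V+\Pi_{2N_c}(\cI_{N_g}V-V)$ together with (\ref{eq:ineg_INg_0}) at $r=s$. The hardest estimate is (\ref{eq:ineg_INg_6}): for $|k|\le 2\pi(2N_c)/L$, one bounds the low-mode aliasing coefficient $\sum_{K\neq 0}\widehat V_{k+N_gK}$ pointwise by $C\sum_{K'\in\Z^3\setminus\{0\}}|k+(2\pi N_g/L)K'|^{-m}$; invoking the separation estimate above and convergence of $\sum_{K'\neq 0}|K'|_\infty^{-m}$ (where the assumption $m>3$ is used) gives a pointwise bound of order $N_g^{-m}$. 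Summing $(1+|k|^2)^r|\cdot|^2$ over the $O(N_c^3)$ modes with $|k|\le 2\pi(2N_c)/L$ and using $(1+|k|^2)^r\lesssim N_c^{2r}$ yields, after square-rooting, the announced factor $N_c^{r+3/2}N_g^{-m}$.

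For Part 3, applying (\ref{eq:integration_formula_INg}) converts the integral into a quadrature sum, and the hypothesis $|\phi(t)|\le C_\phi(1+t^2)$ gives $|\int_\Gamma\cI_{N_g}(\phi(|v_{N_c}|^2))|\le C_\phi(L/N_g)^3\sum_x(1+|v_{N_c}(x)|^4)$. The constant piece contributes $C_\phi|\Gamma|$; for the quartic piece, since $|v_{N_c}|^4\in V_{4N_c}$, exact quadrature (\ref{eq:exact_integration}) reduces the sum to $C_\phi\|v_{N_c}\|_{L^4_\#}^4$. The only genuinely delicate point throughout is thus (\ref{eq:ineg_INg_6}), where the interplay between the pointwise decay of $\widehat V_k$, the separation of aliased modes, and the $N_c$-weighted summation must be tracked carefully to obtain the optimal exponent.
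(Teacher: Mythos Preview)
Your proposal is correct and follows essentially the same approach as the paper's proof. The only cosmetic difference is in (\ref{eq:ineg_INg_2}): where you invoke Plancherel to match the right-hand side to the quadrature sum, the paper instead uses that $\cI_{N_g}(V)$ agrees with $V$ on the grid points and then that $\cI_{N_g}(V)\,v_{N_c}w_{N_c}$ is exactly integrated by the quadrature rule; both arguments are equivalent and rest on the same Fourier bookkeeping.
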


\begin{proof} 
For $z_{2N_c} \in V_{2N_c}$, it holds
\begin{eqnarray}
\int_\Gamma \cI_{N_g}(V z_{2N_c}) &=& \sum_{x \in \cG_{N_g} \cap \Gamma}  \left( \frac{L}{N_g} \right)^3 V(x) z_{2N_c}(x)  \nonumber \\
& = & \sum_{x \in \cG_{N_g} \cap \Gamma} \left( \frac{L}{N_g} \right)^3 
(\cI_{N_g}(V))(x) z_{2N_c}(x)  \nonumber \\
& = & \int_\Gamma \cI_{N_g}(V) \,
z_{2N_c} \label{eq:equality_INg}
\end{eqnarray}
since $\cI_{N_g}(V) z_{2N_c} \in V_{N_g+2 N_c} \subset V_{2 N_g}$ is exactly integrated. The function
$v_{N_c}w_{N_c}$ being in $V_{2N_c}$, (\ref{eq:ineg_INg_2}) 
is proved. Moreover, as $|v_{N_c}|^2 \in V_{4N_c}$, it
follows from (\ref{eq:exact_integration}) that 
\begin{eqnarray*}
\left| \int_\Gamma \cI_{N_g}(V |v_{N_c}|^2 ) \right| & = &
\left|  \sum_{x \in \cG_{N_g} \cap \Gamma} \left( \frac{L}{N_g}
  \right)^3  V(x)  |v_{N_c}(x)|^2 \right| \\
& \le &  \|V\|_{L^\infty} \left|  \sum_{x \in \cG_{N_g}
    \cap \Gamma} \left( \frac{L}{N_g} 
  \right)^3    |v_{N_c}(x)|^2 \right| \\
& = &  \|V\|_{L^\infty} \int_\Gamma |v_{N_c}|^2.
\end{eqnarray*}
Hence (\ref{eq:bound_INg1}). 
The estimate (\ref{eq:ineg_INg_0}) is
proved in \cite{CHQZ}. To prove (\ref{eq:ineg_INg_4}), we
notice that
\begin{eqnarray*}
\|\Pi_{2N_c}(I_{N_g}(V))\|_{L^2_\#}^2 & \le  & \|I_{N_g}(V)\|_{L^2_\#}^2
\\
& = & \int_\Gamma (I_{N_g}(V))^\ast  (I_{N_g}(V)) \\
& = &  \sum_{x \in \cG_{N_g} \cap \Gamma} \left( \frac{L}{N_g} \right)^3 (I_{N_g}(V))(x)^\ast
(I_{N_g}(V))(x) \\
& = &  \sum_{x \in \cG_{N_g} \cap \Gamma} \left( \frac{L}{N_g} \right)^3 |V(x)|^2 \\
& = &  \int_\Gamma I_{N_g}(|V|^2).
\end{eqnarray*}
The bound (\ref{eq:ineg_INg_5}) is a straightforward
consequence of (\ref{eq:ineg_INg_0}):
\begin{eqnarray*}
\|\Pi_{2N_c}(I_{N_g}(V))\|_{H^s_\#} & \le & \|I_{N_g}(V)\|_{H^s_\#} \le 
\|V\|_{H^s_\#} + \|(1-I_{N_g})(V)\|_{H^s_\#}  \le 
(1+C_{s,s}) \|V\|_{H^s_\#}.
\end{eqnarray*}
Now, we notice that
\begin{eqnarray}
\Pi_{2N_c} (\cI_{N_g}(V)) &=& |\Gamma|^{1/2} 
\sum_{k \in \cR^\ast \, | \, |k| \le \frac{4\pi}{L}N_c} \widehat
V_k^{{\rm FFT},N_g} e_k 
\nonumber \\
& = &  \sum_{k \in \cR^\ast \, | \, |k| \le \frac{4\pi}{L}N_c} \left(
  \sum_{K \in \cR^\ast}  
\widehat V_{k+N_g K} \right)  e_k \label{eq:Pi2NINg}.
\end{eqnarray}
From (\ref{eq:Pi2NINg}), we obtain
\begin{eqnarray*}
\left\| \Pi_{2N_c}(1-\cI_{N_g})(V) \right\|_{H^s_\#}^2 & = &
 \sum_{k \in \cR^\ast \, | \, |k| \le \frac{4\pi}{L}N_c} (1+|k|^2)^s \left| 
\sum_{K \in \cR^\ast \setminus \left\{0\right\}} 
\widehat V_{k+N_g K} \right|^2 \\ & \le  & \left( \sum_{k \in \cR^\ast \, |
  \, |k| \le \frac{4\pi}{L}N_c} (1+ |k|^2)^s \right) 
 \max_{k \in \cR^\ast \, | \, |k| \le \frac{4\pi}{L}N_c}  \left| 
\sum_{K \in \cR^\ast \setminus \left\{0\right\}} 
\widehat V_{k+N_g K} \right|^2 .
\end{eqnarray*}
On the one hand,
$$
\sum_{k \in \cR^\ast \, | \, 
|k| \le \frac{4\pi}{L}N_c} (1+ |k|^2)^s \mathop{\sim}_{N_c \to
  \infty} \frac{32\pi}{2s+3} \left( \frac{4\pi}L \right)^{2s} \, N_c^{2s+3},  
$$
and on the other hand, we have for each $k \in \cR^\ast$ such that $|k|
\le \frac{4\pi}{L}N_c$, 
\begin{eqnarray*}
 \left| \sum_{K \in \cR^\ast \setminus \left\{0\right\}} 
\widehat V_{k+N_g K} \right| & \le & C 
 \sum_{K \in \cR^\ast \setminus \left\{0\right\}} \frac{1}{|k+N_gK|^m}
 \\
& \le & C \, C_0 \left( \frac{L}{2\pi} \right)^{m} N_g^{-m} 
\end{eqnarray*}
where
$$
C_0 = \max_{y \in \R^3 \, | \, |y| \le 1/2} \sum_{K \in \Z^3 \setminus
  \left\{0\right\}}  \frac{1}{|y-K|^m}.
$$
The estimate (\ref{eq:ineg_INg_6}) then easily follows. Let us finally prove
(\ref{eq:bound_INg2}). Using  (\ref{eq:integration_formula_INg}) and
(\ref{eq:exact_integration}), we have
\begin{eqnarray*}
\left| \int_\Gamma \cI_{N_g}(\phi(|v_{N_c}|^2)) \right| &=&
\left| \sum_{x \in \cG_{N_g} \cap \Gamma} \left( \frac{L}{N_g}
  \right)^3  \phi(|v_{N_c}(x)|^2) \right| \\
& \le & C_\phi
\left| \sum_{x \in  \cG_{N_g} \cap \Gamma} \left( \frac{L}{N_g}
  \right)^3  (1+|v_{N_c}(x)|^4) \right| \\
& = &  C_\phi \int_\Gamma (1+|v_{N_c}|^4)  = 
C_\phi \left( |\Gamma|+\|v_{N_c}\|_{L^4_\#}^4 \right).
\end{eqnarray*}
This completes the proof of Lemma~\ref{lem:technical}.
\end{proof}

\section{Planewave approximation of the TFW model}
\label{sec:TFW}

In the TFW model, as well as in any orbital-free model, the ground state
electronic density of the system is obtained by minimizing an explicit
functional of the density. Denoting by $\cN$ the number of electrons in
the simulation cell and by
$$
{\mathfrak R}_\cN = \left\{ \rho \ge 0 \; | \; \sqrt\rho \in
  H^1_\#(\Gamma), \; \int_\Gamma \rho = \cN \right\}
$$
the set of admissible densities, the TFW problem reads
\begin{equation} 
I^{\rm TFW}  =  \inf \left\{ {\mathcal E}^{\rm TFW}(\rho), \; \rho \in
  {\mathfrak R}_\cN \right\} \label{eq:minTFWrho}, 
\end{equation}
where 
$$\dps
{\mathcal E}^{\rm TFW}(\rho)  =  \frac{C_{\rm W}}2 \int_\Gamma |\nabla
\sqrt \rho|^2 +  
C_{\rm TF} \int_\Gamma \rho^{5/3} + \int_\Gamma \rho V^{\rm ion} + \frac
12 D_\Gamma(\rho,\rho).
$$

\noindent 
$C_{\rm W}$ is a positive real number ($C_{\rm W}=1$, $1/5$ or $1/9$
depending on the context \cite{DreizlerGross}), and $C_{\rm TF}$ is 
the Thomas-Fermi constant: $C_{\rm TF}=\frac{10}3(3\pi^2)^{2/3}$.
The last term of the TFW energy models the periodic Coulomb energy: for
$\rho$ and $\rho'$ in $H^{-1}_\#(\Gamma)$,
$$
D_\Gamma(\rho,\rho'):= 4 \pi \sum_{k \in \cR^\ast \setminus
  \left\{0\right\}} |k|^{-2} \widehat \rho_k^\ast \, 
\widehat \rho'_k.
$$
We finally make the assumption that $V^{\rm ion}$ is a $\cR$-periodic 
potential such that 
\begin{equation} \label{eq:hypothesis}
\exists m > 3, \; C \ge 0 \mbox{ s.t. }  \forall k \in \cR^\ast, \;
|\widehat V^{\rm ion}_k| \le C |k|^{-m}.
\end{equation}
Note that this implies that $V^{\rm ion}$ is in
$H^{m-3/2-\epsilon}(\Gamma)$ for all $\epsilon > 0$, hence in $C^0_\#(\Gamma)$ since $m-3/2-\epsilon > 3/2$ for $\epsilon$ small enough. 
It is convenient to reformulate the TFW model in terms of
$v=\sqrt{\rho}$. It can be easily seen that
\begin{equation} 
I^{\rm TFW}  =  \inf \left\{ E^{\rm TFW}(v), \; v \in H^1_\#(\Gamma),
  \; \int_\Gamma |v|^2 = \cN \right\}, \label{eq:minTFWu} 
\end{equation}
where 
$$
E^{\rm TFW}(v)  =  \frac{C_{\rm W}}2 \int_\Gamma |\nabla v|^2 + 
C_{\rm TF} \int_\Gamma |v|^{10/3} + \int_\Gamma  V^{\rm ion} |v|^2 + \frac
12 D_\Gamma(|v|^2,|v|^2).
$$

Let $F(t)=C_{\rm TF}t^{5/3}$ and $f(t) = F'(t) = \frac 53 C_{\rm
  TF}t^{2/3}$. The function $F$ is in $C^1([0,+\infty))\cap 
C^\infty((0,+\infty))$, is strictly convex on $[0,+\infty)$, and
for all $(t_1,t_2) \in \R_+ \times \R_+$,
\begin{equation} \label{eq:fff}
|f(t_2^2)t_2- f(t_1^2)t_2 - 2f'(t_1^2)t_1^2(t_2-t_1)| 
\le \frac{70}{27} C_{\rm TF} \max(t_1^{1/3},t_2^{1/3}) \, |t_2-t_1|^2.
\end{equation}

The first and second derivatives of
$E^{\rm TFW}$   are respectively given by
\begin{eqnarray*}
&& \langle {E^{\rm TFW}}'(v),w \rangle_{H^{-1}_\#,H^1_\#} 
= 2 \langle {\cal H}^{{\rm TFW}}_{|v|^2}v,w \rangle; \\
&& \langle {E^{\rm TFW}}''(v)w_1,w_2 \rangle_{H^{-1}_\#,H^1_\#} 
= 2 \langle {\cal H}^{{\rm TFW}}_{|v|^2}w_1,w_2 \rangle + 4D_\Gamma(vw_1,vw_2)+ 4 \int_\Gamma
f'(|v|^2) |v|^2 w_1w_2, 
\end{eqnarray*}
where we have denoted by  ${\cal H}^{{\rm TFW}}_{\rho}$ the TFW Hamiltonian associated with the density $\rho$
$$
{\cal H}^{{\rm TFW}}_{\rho} = - \frac{C_{\rm W}}2 \Delta + f(\rho) + V^{\rm ion} +
V_{\rho}^{\rm Coulomb},   
$$
 where 
$$
V_{\rho}^{\rm Coulomb}(x) = 4\pi
\sum_{k \in \cR^\ast \setminus \left\{0\right\}} |k|^{-2}
\widehat{\rho}_k e_k(x)
$$
is the $\cR$-periodic Coulomb potential generated by the $\cR$-periodic charge
distribution~$\rho$. Recall that $V^{\rm Coulomb}_\rho$ can also be
defined as the unique solution in $H^1_\#(\Gamma)$ to 
$$
\left\{ \begin{array}{l}
\dps - \Delta V_\rho^{\rm Coulomb} = 4\pi \left(
  \rho-|\Gamma|^{-1}\int_\Gamma\rho \right) \\
\dps \int_\Gamma V_\rho^{\rm Coulomb} = 0.
\end{array} \right.
$$

Let us recall (see  
\cite{Lieb} and the proof of Lemma~2 in \cite{CCM}) that 
\begin{itemize}
\item (\ref{eq:minTFWrho}) has a unique
minimizer $\rho^0$, and that the minimizers of 
  (\ref{eq:minTFWu}) are $u$ and $-u$, where
  $u=\sqrt{\rho^0}$;
\item  $u$ is in
$H^{m+1/2-\epsilon}_\#(\Gamma)$ for each $\epsilon
> 0$ (hence in
  $C^{2}_\#(\Gamma)$ since $m+1/2-\epsilon > 7/2$ for $\epsilon$ small
  enough);
\item $u > 0$ on $\R^3$;
\item $u$  satisfies the Euler equation 
$$
{\cal H}^{{\rm TFW}}_{|u|^2}(u) = - \frac{C_{\rm W}}2 \Delta u + \left(\frac 53 C_{\rm TF} u^{4/3} +
V^{\rm ion}  + V_{u^2}^{\rm Coulomb} \right) u = \lambda u 
$$
for some $\lambda \in \R$, (the
ground state eigenvalue of ${\cal H}^{{\rm TFW}}_{\rho^0}$, that is
non-degenerate).  
\end{itemize}

The planewave discretization of the TFW model is obtained by choosing 
\begin{enumerate}
\item an energy cut-off $\Ec > 0$ or, equivalently, a finite
  dimensional Fourier space $V_{N_c}$, the integer $N_c$ being related
  to $\Ec$ through the relation $N_c :=
[\sqrt{2\Ec} \, L/2\pi]$;
\item a cartesian grid $\cG_{N_g}$ with step size
$L/N_g$ where $N_g \in \N^\ast$ is such
that $N_g \ge 4N_c+1$, 
\end{enumerate}
and by considering the finite dimensional minimization problem 
\begin{equation} 
I^{\rm TFW}_{N_c,N_g}  =  \inf \left\{ E^{\rm TFW}_{N_g}(v_{N_c}),
  \; v_{N_c} \in V_{N_c},
  \; \int_\Gamma |v_{N_c}|^2 = \cN \right\}, \label{eq:minTFWuN} 
\end{equation}
where
\begin{eqnarray*} 
E^{\rm TFW}_{N_g}(v_{N_c}) & = & \frac{C_{\rm W}}2 \int_\Gamma
|\nabla v_{N_c}|^2 +  
C_{\rm TF} \int_\Gamma \cI_{N_g}(|v_{N_c}|^{10/3}) + \int_\Gamma
\cI_{N_g}(V^{\rm ion}) |v_{N_c}|^2 \\ && + \frac 12
D_\Gamma(|v_{N_c}|^2,|v_{N_c}|^2),
\end{eqnarray*} 
$\cI_{N_g}$ denoting the interpolation operator introduced in the 
previous section. The Euler equation associated with
(\ref{eq:minTFWuN}) can be written as a nonlinear eigenvalue problem
$$
\forall v_{N_c} \in V_{N_c}, \quad \langle (\widetilde
{\cal H}^{{\rm TFW}, N_g}_{|u_{N_c,N_g}|^2}  - \lambda_{N_c,N_g})
u_{N_c,N_g},v_{N_c} \rangle_{H^{-1}_\#,H^1_\#} = 0,
$$
where we have denoted by
$$
\widetilde {\cal H}^{{\rm TFW}, N_g}_{\rho} =  - \frac{C_{\rm W}}2 \Delta +
\cI_{N_g}\left(\frac 53 C_{\rm TF} \rho^{2/3} + V^{\rm ion}\right) +
V_{\rho}^{\rm Coulomb}    
$$
the pseudospectral TFW Hamiltonian associated with the density $\rho$,
and by $\lambda_{N_c,N_g}$ the Lagrange multiplier of the constraint
$\int_\Gamma |v_{N_c}|^2 = \cN$.
We therefore have
$$
- \frac{C_{\rm W}}2 \Delta u_{N_c,N_g}+ \Pi_{N_c} \left[
\left(\cI_{N_g}\left( \frac 53 C_{\rm TF}|u_{N_c,N_g}|^{4/3} + V^{\rm ion}\right) + V_{|u_{N_c,N_g}|^2}^{\rm Coulomb}\right)u_{N_c,N_g}
\right]  = \lambda_{N_c,N_g} u_{N_c,N_g}.
$$
Under the condition that $N_g \ge 4N_c+1$, we have for all $\phi \in C^0_\#(\Gamma)$,  
$$
\forall (k,l) \in \cR^\ast \times \cR^\ast \mbox{ s.t. } |k|
,|l| \le \frac{2\pi}L N_c, \quad \int_\Gamma \cI_{N_g}(\phi) \, 
e_k^\ast \, e_l  = \widehat{\phi}^{{\rm FFT}}_{k-l},
$$
so that,  $\widetilde {\cal H}^{{\rm TFW}}_{u_{N_c,N_g}}$ is defined on $V_{N_c}$  by
the Fourier matrix 
\begin{eqnarray*}
[\widehat {\cal H}^{{\rm TFW}, N_g}_{|u_{N_c,N_g}|^2}]_{kl} &=& \frac{C_{\rm W}}2 |k|^2 \delta_{kl} + 
\frac 53 C_{\rm TF} \widehat{(|u_{N_c,N_g}|^{4/3})}_{k-l}^{{\rm FFT},N_g}
+ \widehat{(V^{\rm ion})}_{k-l}^{{\rm FFT},N_g} \\ &&  
+4\pi \frac{\widehat{(|u_{N_c,N_g}|^2)}_{k-l}^{{\rm FFT},N_g}}{|k-l|^2}
\left( 1 - \delta_{kl} \right),
\end{eqnarray*}
where, by convention, the last term of the right hand side is equal to
zero for $k=l$.

\medskip

\noindent
We also introduce the variational approximation of (\ref{eq:minTFWu}) 
\begin{equation}
I^{\rm TFW}_{N_c}  =  \inf \left\{ E^{\rm TFW}(v_{N_c}),
  \; v_{N_c} \in V_{N_c},
  \; \int_\Gamma |v_{N_c}|^2 = \cN \right\}. \label{eq:minTFWuV} 
\end{equation}
Any minimizer $u_{N_c}$ to (\ref{eq:minTFWuV}) satisfies the
elliptic equation
\begin{equation} \label{eq:Euler_VNc}
- \frac{C_{\rm W}}2 \Delta u_{N_c} + \Pi_{N_c} \left[ 
\frac 53 C_{\rm TF} |u_{N_c}|^{4/3} u_{N_c}+ V^{\rm ion}u_{N_c} + V_{|u_{N_c}|^2}^{\rm Coulomb} u_{N_c} 
 \right] = \lambda_{N_c}u_{N_c},
\end{equation}
for some $\lambda_{N_c} \in \R$.

\medskip

The main result of this section is an extension of results previously obtained by A. Zhou~\cite{Zhou}.

\medskip

\begin{theorem} \label{Th:TFW} For each $N_c \in \N$, we denote by
  $u_{N_c}$ a minimizer to (\ref{eq:minTFWuV}) such that  
  $(u_{N_c},u)_{L^2_\#} \ge 0$ and, for each $N_c
  \in \N$ and $N_g \ge 4N_c+1$, we 
  denote by $u_{N_c,N_g}$ a minimizer to (\ref{eq:minTFWuN}) such that 
  $(u_{N_c,N_g},u)_{L^2_\#} \ge 0$. Then for $N_c$ large enough, $u_{N_c}$
  and $u_{N_c,N_g}$ are unique, and the following estimates hold true
\begin{eqnarray}
\|u_{N_c}-u\|_{H^s_\#} &\le& C_{s,\epsilon} N_c^{-(m-s+1/2-\epsilon)};
\label{eq:estim_Nc_u} \\
|\lambda_{N_c}-\lambda| &\le & C_\epsilon N_c^{-(2m-1-\epsilon)};
\label{eq:estim_Nc_lambda} \\
\gamma \|u_{N_c}-u\|_{H^1_\#}^2 \le I^{\rm TFW}_{N_c} -I^{\rm TFW}
&\le& C \|u_{N_c}-u\|_{H^1_\#}^2;
\label{eq:estim_Nc_I}
\\
\|u_{N_c,N_g}-u_{N_c}\|_{H^s_\#} &\le & C_s \, 
N_c^{3/2+(s-1)_+} N_g^{-m}; \label{eq:estim_NcNg_u}  \\ 
|\lambda_{N_c,N_g}-\lambda_{N_c}| &\le& C 
  N_c^{3/2}N_g^{-m};\label{eq:estim_NcNg_lambda} \\
|I^{\rm TFW}_{N_c,N_g} -I^{\rm TFW}_{N_c}| &\le & C
N_c^{3/2}N_g^{-m} , \label{eq:estim_NcNg_I}
\end{eqnarray}
for all $- m +3/2 < s < m+1/2$ and $\epsilon > 0$, and for some
constants $\gamma > 0$, $C_{s,\epsilon} \ge 0$, $C_\epsilon \ge 0$, $C 
\ge 0$ and $C_s \ge 0$ independent of $N_c$ and $N_g$.
\end{theorem}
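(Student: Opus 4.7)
The proof naturally splits into two independent blocks: first, the purely variational Galerkin estimates (\ref{eq:estim_Nc_u})--(\ref{eq:estim_Nc_I}) comparing $u_{N_c}$ to $u$, and second, the quadrature perturbation estimates (\ref{eq:estim_NcNg_u})--(\ref{eq:estim_NcNg_I}) comparing $u_{N_c,N_g}$ to $u_{N_c}$. The workhorse throughout is the non-degeneracy of the continuous ground state: since $u > 0$ and $\lambda$ is a simple eigenvalue of $\mathcal{H}^{\rm TFW}_{\rho^0}$, the second derivative $E^{\rm TFW}{}''(u)$ is coercive on $\{u\}^\perp \subset H^1_\#$, i.e.\ there exists $\gamma>0$ with $\langle E^{\rm TFW}{}''(u) w,w\rangle \ge \gamma \|w\|_{H^1_\#}^2$ for all $w \perp_{L^2} u$. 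This coercivity underlies both blocks.

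For the variational block, I first establish existence and convergence: testing $I^{\rm TFW}_{N_c}$ against the renormalized $\sqrt{\cN}\,\Pi_{N_c}u / \|\Pi_{N_c}u\|_{L^2_\#}$ and using $u \in H^{m+1/2-\epsilon}_\#$ together with (\ref{eq:app-Fourier}) yields $I^{\rm TFW}_{N_c} - I^{\rm TFW} \to 0$. Coercivity and the sign condition $(u_{N_c},u)_{L^2_\#}\ge 0$ then force $u_{N_c}\to u$ in $H^1_\#$ and give uniqueness for $N_c$ large. To turn this into (\ref{eq:estim_Nc_u}), I decompose $u_{N_c}-u$ into its $L^2$-components parallel and perpendicular to $u$; the parallel part is $O(\|u_{N_c}-u\|_{L^2}^2)$ from the normalization constraint, and for the perpendicular part I subtract the Euler equations and apply coercivity of $E^{\rm TFW}{}''(u)$ against $\Pi_{N_c}(u_{N_c}-u) - (u_{N_c}-u)$, obtaining an $H^1$ rate equal to the best approximation rate $N_c^{-(m-1/2-\epsilon)}$. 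Bootstrapping to general $H^s_\#$ for $s\le m+1/2-\epsilon$ is then performed on the Galerkin Euler--Lagrange equation (\ref{eq:Euler_VNc}): applying $(-\Delta)^{s/2}$ and using that the nonlinear terms $|u_{N_c}|^{4/3}u_{N_c}$, $V^{\rm ion}u_{N_c}$, $V_{|u_{N_c}|^2}^{\rm Coulomb}u_{N_c}$ are controlled in $H^{s-2}_\#$ in terms of $\|u_{N_c}\|_{H^{s}_\#}$ (algebra property of $H^s$ for $s > 3/2$) delivers (\ref{eq:estim_Nc_u}).

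The eigenvalue bound (\ref{eq:estim_Nc_lambda}) comes from the standard doubling trick: writing $\lambda_{N_c}-\lambda = \cN^{-1}\langle E^{\rm TFW}{}'(u_{N_c})-E^{\rm TFW}{}'(u), u\rangle_{H^{-1}_\#,H^1_\#}/2$ plus lower-order corrections, Taylor-expanding and exploiting that $u$ is a critical point yields an $O(\|u_{N_c}-u\|_{H^1}^2) = O(N_c^{-(2m-1-2\epsilon)})$ bound. The energy double inequality (\ref{eq:estim_Nc_I}) follows directly from the Taylor formula $I^{\rm TFW}_{N_c}-I^{\rm TFW} = \tfrac{1}{2}\langle E^{\rm TFW}{}''(u)(u_{N_c}-u),u_{N_c}-u\rangle + o(\|u_{N_c}-u\|_{H^1_\#}^2)$ combined with coercivity (lower bound) and continuity of $E^{\rm TFW}{}''$ (upper bound).

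For the quadrature block, I subtract the Euler equations satisfied by $u_{N_c,N_g}$ and $u_{N_c}$ inside $V_{N_c}$, getting a linear equation for $\delta u := u_{N_c,N_g}-u_{N_c}$ of the form $\mathcal{L}_{N_c}\delta u = \Pi_{N_c}[(\cI_{N_g}-\mathrm{Id})(\tfrac{5}{3}C_{\rm TF}|u_{N_c}|^{4/3}+V^{\rm ion})\,u_{N_c}] + (\lambda_{N_c,N_g}-\lambda_{N_c})u_{N_c}+\mathrm{n.l.}$, where $\mathcal{L}_{N_c}$ is the Galerkin restriction of $E^{\rm TFW}{}''(u_{N_c})/2$, coercive on $\{u_{N_c}\}^\perp$ for $N_c$ large by continuity of the coercivity property in $N_c$. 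The eigenvalue perturbation $\lambda_{N_c,N_g}-\lambda_{N_c}$ is controlled by testing against $u_{N_c}$ and invoking (\ref{eq:ineg_INg_2})--(\ref{eq:bound_INg1}) to eliminate the quadrature on $|u_{N_c}|^2$. The main quantitative inputs are (\ref{eq:ineg_INg_6}) applied to $V^{\rm ion}$, which gives $\|\Pi_{2N_c}(1-\cI_{N_g})V^{\rm ion}\|_{H^r_\#}\le C_V N_c^{r+3/2}N_g^{-m}$, together with (\ref{eq:ineg_INg_0}) applied to the smooth nonlinear term $|u_{N_c}|^{4/3}$ (uniformly bounded in $H^{m+1/2-\epsilon}_\#$ thanks to the previous block and $u_{N_c}\to u > 0$). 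Inverting $\mathcal{L}_{N_c}$ delivers (\ref{eq:estim_NcNg_u}) for $s=1$; for general $s$ I apply an inverse estimate in $V_{N_c}$ (picking up the factor $N_c^{(s-1)_+}$) and combine with the Euler equation. Finally, (\ref{eq:estim_NcNg_I}) follows from (\ref{eq:estim_NcNg_u}) via expanding $E^{\rm TFW}_{N_g}(u_{N_c,N_g})-E^{\rm TFW}(u_{N_c})$ and bounding each quadrature error by Lemma~\ref{lem:technical}.

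The principal obstacle will be the bootstrap to high-order $H^s_\#$ estimates, because the nonlinear term $|u|^{4/3}u$ is only $C^{1,1/3}$ near zero; the argument requires using that $u>0$ is bounded below on the compact torus (inherited by $u_{N_c}$ via $H^1\hookrightarrow C^0$ convergence for $N_c$ large), so that $|u_{N_c}|^{4/3}$ is smooth on the range of values taken, restoring full elliptic regularity. Matching the sharp $N_c^{3/2+(s-1)_+}N_g^{-m}$ exponent in (\ref{eq:estim_NcNg_u}) also requires carefully using the $\Pi_{2N_c}$ localization in (\ref{eq:ineg_INg_6}) rather than the naive bound.
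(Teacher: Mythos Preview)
Your overall architecture matches the paper's, but there is one genuine gap that affects both the $H^s$ estimate for $s<1$ and the eigenvalue estimate.

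\medskip

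\textbf{Missing duality argument.} Your bootstrap for~(\ref{eq:estim_Nc_u}) only runs upward from $s=1$; you give no mechanism for the optimal $L^2$ or $H^{-r}$ rates (which are part of the claim, since the range is $-m+3/2<s<m+1/2$). The paper obtains these via an Aubin--Nitsche argument: one introduces the adjoint problem
\[
\psi_w \in u^\perp,\qquad \langle (E^{\rm TFW}{}''(u)-2\lambda)\psi_w,v\rangle = \langle w,v\rangle \quad \forall v\in u^\perp,
\]
uses elliptic regularity $\|\psi_w\|_{H^{r+2}_\#}\le C_r\|w\|_{H^r_\#}$ for $0\le r<m-3/2$, and pairs $u_{N_c}-u$ against $\psi_w-\Pi^1_{V_{N_c}\cap u^\perp}\psi_w$ to gain one extra power of $N_c^{-1}$ per Sobolev index. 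The $H^s$ bounds for $s\in(1,m+1/2)$ are then obtained not by elliptic bootstrap on the projected Euler equation (which is awkward because of the $\Pi_{N_c}$), but by writing $u_{N_c}-u=(u_{N_c}-\Pi_{N_c}u)+(\Pi_{N_c}u-u)$ and applying the inverse inequality in $V_{N_c}$ to the first piece together with the $L^2$ rate.

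\medskip

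\textbf{The eigenvalue estimate is not quadratic.} Your claim that the ``doubling trick'' gives $|\lambda_{N_c}-\lambda|=O(\|u_{N_c}-u\|_{H^1_\#}^2)$ is correct for linear eigenvalue problems but fails here: the density-dependence of ${\cal H}^{\rm TFW}_{\rho}$ produces a term that is only linear in the error. Concretely, the paper derives
\[
\lambda_{N_c}-\lambda = \cN^{-1}\Big[\langle ({\cal H}^{\rm TFW}_{\rho^0}-\lambda)(u_{N_c}-u),(u_{N_c}-u)\rangle + \int_\Gamma w_{N_c}(u_{N_c}-u)\Big],
\]
where $w_{N_c}=\frac{f(|u_{N_c}|^2)-f(|u|^2)}{u_{N_c}-u}|u_{N_c}|^2+V^{\rm Coulomb}_{|u_{N_c}|^2}(u_{N_c}+u)$ is bounded in $H^{m-3/2-\epsilon}_\#$ but \emph{not} small. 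The second integral is therefore only controlled by $\|u_{N_c}-u\|_{H^{-r}_\#}$ with $r$ close to $m-3/2$, and this is exactly what delivers the rate $N_c^{-(2m-1-\epsilon)}$. Without the duality step above you cannot close this estimate.

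\medskip

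\textbf{Quadrature block.} Your approach (subtract Euler equations and invert the coercive linearization $\mathcal L_{N_c}$ on $\{u_{N_c}\}^\perp$) differs from the paper's, which instead uses the energy inequality $E^{\rm TFW}(u_{N_c,N_g})-E^{\rm TFW}(u_{N_c})\ge \frac\gamma 2\|u_{N_c,N_g}-u_{N_c}\|_{H^1_\#}^2$ and bounds the left side by the quadrature error on $V^{\rm ion}$ and on $F(|u_{N_c,N_g}|^2)-F(|u_{N_c}|^2)$ via~(\ref{eq:ineg_INg_2}) and~(\ref{eq:ineg_INg_6}). Either route works; the paper's is slightly more robust because it avoids linearizing the pseudospectral nonlinearity. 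Note that in both cases one must first establish uniform a~priori bounds on $u_{N_c,N_g}$ in $H^2_\#$ (hence $L^\infty$) and eventually in $H^{m+1/2-\epsilon}_\#$, since the application of~(\ref{eq:ineg_INg_6}) to the nonlinear term $|u_{N_c,N_g}|^{4/3}$ requires decay of its Fourier coefficients uniformly in $N_c,N_g$---your proposal assumes this implicitly but does not derive it.
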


\medskip

\begin{remark} \label{Th:OF}
More complex orbital-free models have been proposed in the recent years
\cite{orbitalfree}, which are used to perform
multimillion atom DFT calculations. Some of these models however are not
well posed  (the energy functional is not bounded from below 
\cite{BlancCances}), and 
the others are not well understood from a mathematical point of
view. For these reasons, we will not deal with those models in this
article.
\end{remark}

\subsection{A priori estimates for the variational approximation.}

In this section, we prove  the first part of Theorem~\ref{Th:TFW}, related to the variational approximation (\ref{eq:minTFWuV}).
The estimates (\ref{eq:estim_Nc_u}), (\ref{eq:estim_Nc_lambda}) and
(\ref{eq:estim_Nc_I}) originate from arguments already introduced in
\cite{CCM}. For brevity, we only recall the main steps of the
proof and leave the details to the reader. 

\medskip

\noindent
The difference between (\ref{eq:minTFWu}) and the problem dealt
with in \cite{CCM} is the presence of the Coulomb term
$D_\Gamma(|v|^2,|v|^2)$, for which the following estimates are
available: 
\begin{eqnarray}
0 \le D_\Gamma(\rho,\rho) & \le & C \|\rho\|_{L^2_\#}^2, \quad \mbox{
  for all } \rho \in L^2_\#(\Gamma),  \label{eq:estim_D1} \\
|D_\Gamma(uv,uw)| & \le  & C \|v\|_{L^2_\#} \|w\|_{L^2_\#},  \quad \mbox{
  for all } (v,w) \in (L^2_\#(\Gamma))^2, \label{eq:estim_D2} \\
|D_\Gamma(\rho,vw)| & \le & C \|\rho\|_{L^2_\#} \|v\|_{L^2_\#}
\|w\|_{L^2_\#}, \quad \mbox{
  for all } (\rho,v,w) \in (L^2_\#(\Gamma))^3, \qquad
\label{eq:estim_D3} \\
\|V_\rho^{\rm Coulomb}\|_{L^\infty} &\le& C \|\rho\|_{L^2_\#} , \quad \mbox{
  for all } \rho \in L^2_\#(\Gamma), \label{eq:estim_D4} \\
\|V_\rho^{\rm Coulomb}\|_{H^{s+2}_\#} &\le& C \|\rho\|_{H^s_\#}, \quad \mbox{
  for all } \rho \in H^s_\#(\Gamma).\label{eq:estim_D5}
\end{eqnarray}
Here and in the sequel, $C$ denotes a non-negative constant which may
depend on $\Gamma$, $V^{\rm ion}$ and $\cN$, but not on the
discretization parameters.

Using (\ref{eq:estim_D1}), (\ref{eq:estim_D2}) and the fact that $f' >
0$ on $(0,+\infty)$, we can then show (see the proof of Lemma~1 in
\cite{CCM}) that there exist $\beta > 0$, $\gamma > 0$ and $M \ge 0$
such that for all $v \in H^1_\#(\Gamma)$,
\begin{eqnarray}
&& 
0 \le \langle ({\cal H}^{{\rm TFW}}_{\rho^0}-\lambda)v,v \rangle_{H^{-1}_\#,H^1_\#}  \le
M \|v\|_{H^1_\#}^2, 
\\
&& \beta \|v\|_{H^1_\#}^2  \le 
\langle ({E^{\rm TFW}}''(u)-2\lambda)v,v \rangle_{H^{-1}_\#,H^1_\#}
\le M \|v\|_{H^1_\#}^2, \label{eq:NRJsecondContinue}
\end{eqnarray}
and for all $v \in H^1_\#(\Gamma)$ such that $\|v\|_{L^2_\#}=\cN^{1/2}$ and
$(v,u)_{L^2_\#} \ge 0$,
\begin{equation} \label{eq:borne_1}
\gamma \|v-u\|_{H^1_\#}^2 \le 
\langle ({\cal H}^{{\rm TFW}}_{\rho^0}-\lambda)(v-u),(v-u) \rangle_{H^{-1}_\#,H^1_\#}. 
\end{equation}
Remarking that
\begin{eqnarray} \!\!\!\!\!\!\!\!\!\!\!\!\!\!\!\!\!\!
E^{\rm TFW}(u_{N_c})- E^{\rm TFW}(u) \!\!\! & = & \!\!\! \langle
({\cal H}^{{\rm TFW}}_{\rho^0}-\lambda)(u_{N_c}-u),(u_{N_c}-u)\rangle_{H^{-1}_\#,H^1_\#}
\nonumber \\
\!\!\! && \!\!\! + \frac 12
D_\Gamma(|u_{N_c}|^2-|u|^2,|u_{N_c}|^2-|u|^2) \nonumber \\ \!\!\!
&& \!\!\! + \int_\Gamma F(|u_{N_c}|^2)-F(|u|^2)-f(|u|^2)(|u_{N_c}|^2-|u|^2)
\label{eq:erreur_TFW_Nc}
\end{eqnarray}
and using (\ref{eq:borne_1}), the positivity of the bilinear form
$D_\Gamma$, and the convexity of the function $F$, we obtain that 
$$
I^{\rm TFW}_{N_c}-I^{\rm TFW} = E^{\rm TFW}(u_{N_c})- E^{\rm TFW}(u) 
\ge \gamma \|u_{N_c}-u\|_{H^1_\#}^2. 
$$
For each $N_c \in \N$, 
$\widetilde u_{N_c}=\cN^{1/2}\Pi_{N_c}u/\|\Pi_{N_c}u\|_{L^2_\#}$ satisfies
$(\widetilde u_{N_c},u)_{L^2_\#} \ge 0$ and 
$\|\widetilde u_{N_c}\|_{L^2_\#} =\cN^{1/2}$, and the
sequence $(\widetilde u_{N_c})_{N_c \in \N}$ converges to $u$ in
$H^{m+1/2-\epsilon}_\#(\Gamma)$ for each $\epsilon > 0$. As the
functional $E^{\rm TFW}$ is continuous on $H^1_\#(\Gamma)$, we have 
$$
 \|u_{N_c}-u\|_{H^1_\#}^2 \le \gamma^{-1} \left(I^{\rm TFW}_{N_c}-I^{\rm
     TFW}\right) 
\le \gamma^{-1} \left(E^{\rm TFW}(\widetilde u_{N_c}) - E^{\rm
    TFW}(u)\right) 
\mathop{\longrightarrow}_{N_c \to \infty} 0.
$$
Hence, $(u_{N_c})_{N_c \in \N}$ converges to $u$ in
$H^1_\#(\Gamma)$, and we also have
\begin{eqnarray*}
\lambda_{N_c} & = & \cN^{-1} \bigg[ \frac {C_{\rm W}}2 \int_{\Gamma} |\nabla
u_{N_c}|^2
+ \int_\Gamma f(|u_{N_c}|^2) |u_{N_c}|^2
+ \int_\Gamma V^{\rm ion} |u_{N_c}|^2 + D_\Gamma(|u_{N_c}|^2,|u_{N_c}|^2)
 \bigg] \\
& \dps \mathop{\longrightarrow}_{N_c \to \infty} & \cN^{-1} \bigg[ 
\frac {C_{\rm W}}2 \int_{\Gamma} |\nabla u|^2 + \int_\Gamma f(|u|^2) |u|^2 
+ \int_\Gamma V^{\rm ion} |u|^2 + D_\Gamma(|u|^2,|u|^2)
\bigg] \\
& = & \lambda.
\end{eqnarray*}
As $f(|u_{N_c}|^2)u_{N_c}+ V^{\rm ion}u_{N_c} + V_{|u_{N_c}|^2}^{\rm Coulomb} u_{N_c} 
$ is bounded in $L^2_\#(\Gamma)$, uniformly in
$N_c$, we deduce from (\ref{eq:Euler_VNc}) that the sequence
$(u_{N_c})_{N_c \in \N}$ is bounded in $H^2_\#(\Gamma)$, hence in
$L^\infty(\Gamma)$. Now 
\begin{eqnarray*}
\Delta (u_{N_c}-u) & = & 2C_{\rm W}^{-1} \bigg[ \Pi_{N_c} \bigg( 
f(|u_{N_c}|^2)u_{N_c} - f(|u|^2)u + V^{\rm ion}(u_{N_c}-u) + 
 \\
&& \qquad \qquad \qquad V_{|u_{N_c}|^2}^{\rm Coulomb}
u_{N_c}-V_{|u|^2}^{\rm Coulomb}u 
 \bigg) \\
&  & \qquad  \quad + \left(1-\Pi_{N_c}\right) \left(f(|u|^2)u + V^{\rm ion} u + V_{|u|^2}^{\rm
    Coulomb}u \right) \\
&& \qquad  \quad 
- \lambda_{N_c}(u_{N_c}-u) - (\lambda_{N_c}-\lambda) u \bigg].
\end{eqnarray*}
Observing that the right-hand side goes to zero in $L^2_\#(\Gamma)$ when
$N_c$ goes to infinity, we obtain that $(u_{N_c})_{N_c \in \N}$
converges to $u$ in $H^2_\#(\Gamma)$, and therefore in
$C^{0,1/2}_\#(\Gamma)$. In addition, we know from Harnack inequality~\cite{GT} that $u > 0$ in
$\R^3$. Consequently, for $N_c$ large enough,
the function $u_{N_c}$ (which is continuous and $\cR$-periodic) is
bounded away from $0$, uniformly in $N_c$. As $f \in
C^\infty(0,+\infty)$, one can see by a simple bootstrap argument that
the convergence of $(u_{N_c})_{N_c \in \N}$
to $u$ also holds in $H^{m+1/2-\epsilon}_\#(\Gamma)$ for each
$\epsilon > 0$. 
The upper bound in~(\ref{eq:estim_Nc_I}) is obtained from
(\ref{eq:erreur_TFW_Nc}), remarking that
\begin{eqnarray*}
0 & \le & \int_\Gamma
F(|u_{N_c}|^2)-F(|u|^2)-f(|u|^2)(|u_{N_c}|^2-|u|^2) \\
& \le & \frac{35}9 C_{\rm TF} \int_\Gamma \max(|u_{N_c}|^{4/3},|u|^{4/3})
|u_{N_c}-u|^2 \\
& \le & \frac{35}9 C_{\rm TF} \left(\max_{N_c \in \N}
  \|u_{N_c}\|_{L^\infty}\right)^{4/3} \, \|u_{N_c}-u\|_{L^2_\#}^2,
\end{eqnarray*}
and that
\begin{eqnarray*}
0 & \le & D_\Gamma(|u_{N_c}|^2-|u|^2,|u_{N_c}|^2-|u|^2) \le  C
\||u_{N_c}|^2-|u|^2\|_{L^2_\#}^2 \\ 
& \le & 4 C  \, \left(\max_{N_c \in \N} \|u_{N_c}\|_{L^\infty}\right)^2
 \, \|u_{N_c}-u\|_{L^2_\#}^2.
\end{eqnarray*}

\medskip

\noindent
The uniqueness of $u_{N_c}$ for $N_c$ large enough can then be checked
as follows. First, $(u_{N_c},\lambda_{N_c})$ satisfies the variational equation
$$
\forall v_{N_c} \in V_{N_c}, \quad 
\langle
({\cal H}^{{\rm TFW}}_{|u_{N_c}|^2}-\lambda_{N_c})u_{N_c},v_{N_c}\rangle_{H^{-1}_\#,H^1_\#}
= 0.  
$$
Therefore $\lambda_{N_c}$ is the variational approximation  in $V_{N_c}$
of some eigenvalue of ${\cal H}^{{\rm TFW}}_{|u_{N_c}|^2}$. As $(u_{N_c})_{N_c \in
  \N}$ converges to $u$ in $L^\infty(\Gamma)$,
${\cal H}^{{\rm TFW}}_{|u_{N_c}|^2}-{\cal H}^{{\rm TFW}}_{\rho^0}$ converges to~$0$ in operator
norm. Consequently, the $n^{\rm th}$ eigenvalue of ${\cal H}^{{\rm TFW}}_{|u_{N_c}|^2}$
converges to the $n^{\rm th}$ eigenvalue of ${\cal H}^{{\rm TFW}}_{\rho^0}$ when $N_c$ goes
to infinity, the convergence being uniform in $n$. Together
with the fact that the sequence $(\lambda_{N_c})_{N_c \in \N}$ converges
to $\lambda$, the non-degenerate ground state eigenvalue of
${\cal H}^{{\rm TFW}}_{\rho^0}$, this implies that for $N_c$ large enough, $\lambda_{N_c}$
is the ground state eigenvalue of ${\cal H}^{{\rm TFW}}_{|u_{N_c}|^2}$ in $V_{N_c}$ and
for all $v_{N_c} \in V_{N_c}$ such that $\|v_{N_c}\|_{L^2_\#}=\cN^{1/2}$ and
$(v_{N_c},u_{N_c})_{L^2_\#} \ge 0$,
\begin{eqnarray}
E^{\rm TFW}(v_{N_c})- E^{\rm TFW}(u_{N_c}) & = & \langle
({\cal H}^{{\rm TFW}}_{|u_{N_c}|^2}-\lambda_{N_c})(v_{N_c}-u_{N_c}),(v_{N_c}-u_{N_c})\rangle_{H^{-1}_\#,H^1_\#}  \nonumber
\\
&& + \frac 12 D_\Gamma(|v_{N_c}|^2-|u_{N_c}|^2,|v_{N_c}|^2-|u_{N_c}|^2)\nonumber \\
&& + \int_\Gamma
F(|v_{N_c}|^2)-F(|u_{N_c}|^2)-f(|u_{N_c}|^2)(|v_{N_c}|^2-|u_{N_c}|^2)
\nonumber \\
& \ge &  \langle
({\cal H}^{{\rm TFW}}_{|u_{N_c}|^2}-\lambda_{N_c})(v_{N_c}-u_{N_c}),(v_{N_c}-u_{N_c})\rangle_{H^{-1}_\#,H^1_\#} \nonumber
\\
& \ge & \frac{\gamma}2 \|v_{N_c}-u_{N_c}\|_{H^1_\#}^2. \label{eq:pre_uniqueness}
\end{eqnarray}
It easily follows that for $N_c$ large enough, (\ref{eq:minTFWuV}) has a
unique minimizer $u_{N_c}$ such that $(u_{N_c},u)_{L^2_\#} \ge 0$.

\medskip

Let us now establish the rates of convergence of
$|\lambda_{N_c}-\lambda|$ and $\|u_{N_c}-u\|_{H^s_\#}$. First,
\begin{eqnarray}\label{eq:val_ppe}
\lambda_{N_c}-\lambda & = &
\cN^{-1} \bigg[
\langle ({\cal H}^{{\rm
    TFW}}_{|u|^2}-\lambda)(u_{N_c}-u),(u_{N_c}-u)\rangle_{H^{-1}_\#,H^1_\#} 
\nonumber \\
&& \qquad \qquad 
+ \int_\Gamma w_{N_c} (u_{N_c}-u) \bigg]
\end{eqnarray}
with
$$
w_{N_c}=\frac{f(|u_{N_c}|^2)-f(|u|^2)}{u_{N_c}-u}|u_{N_c}|^2
+ V^{\rm Coulomb}_{|u_{N_c}|^2}(u_{N_c}+u).
$$
As $u_{N_c}$ is bounded away
from $0$ and $f \in
C^\infty((0,+\infty))$, the function $w_{N_c}$ is uniformly bounded in 
$H^{m-3/2-\epsilon}_\#(\Gamma)$ (at least for $N_c$ large enough). We
therefore obtain that for all $0 \le r < m-3/2$, there exists a constant
$C_r \in \R_+$ such that for all $N_c$ large enough,
\begin{equation} \label{eq:pre_estim_lambda}
|\lambda_{N_c}-\lambda| \le C_r \left( \|u_{N_c}-u\|_{H^1_\#}^2
+ \| u_{N_c}-u \|_{H^{-r}_\#} \right).
\end{equation}
In order to evaluate the $H^1_\#$-norm of the error $(u_{N_c}-u)$, we first
notice that 
\begin{equation}\label{eq:4:1:H1}
\forall v_{N_c} \in V_{N_c}, \quad 
\| u_{N_c}-u \|_{H^1_\#} \le \| u_{N_c} - v_{N_c}  \|_{H^1_\#} 
+\| v_{N_c}  - u \|_{H^1_\#},
\end{equation}
and that 
 \begin{eqnarray} \!\!\!\!\!\!\!\!\!\!\!\!\!\!\!\!\!\!
 \| u_{N_c}  - v_{N_c} \|_{H^1_\#}^2 & \le & \beta^{-1} \,  
\langle ({E^{\rm TFW}}''(u) - 2\lambda)(u_{N_c}  - v_{N_c} ),
(u_{N_c}  - v_{N_c} ) \rangle_{H^{-1}_\#,H^1_\#} \nonumber \\
& = & \beta^{-1}  \bigg( \langle ({E^{\rm TFW}}''(u) - 2\lambda)(u_{N_c}  - u),
(u_{N_c}  - v_{N_c} ) \rangle_{H^{-1}_\#,H^1_\#} \nonumber \\ 
&  & \quad  +\langle ({E^{\rm TFW}}''(u) - 2\lambda)(u- v_{N_c} ),(u_{N_c}  -
  v_{N_c} ) \rangle_{H^{-1}_\#,H^1_\#} \bigg). 
 \label{eq:4:3:H1}
 \end{eqnarray}
For all $z_{N_c} \in V_{N_c}$, 
\begin{eqnarray}
&& \!\!\!\!\!\!\!\!\!\!\!\!\!\!\!\!
\langle ({E^{\rm TFW}}''(u)-2\lambda)(u_{N_c}-u),z_{N_c}
\rangle_{H^{-1}_\#,H^1_\#} \nonumber\\ & = & 
- 2 \int_\Gamma [f(|u_{N_c}|^2)u_{N_c}-f(|u|^2)u - 2 f'(|u|^2)|u|^2
(u_{N_c}-u)] z_{N_c} \nonumber \\
& & - 2D_\Gamma((u_{N_c}-u)(u_{N_c}+u),(u_{N_c}-u)z_{N_c})
- 2D_\Gamma((u_{N_c}-u)^2,uz_{N_c}) \nonumber \\ &&
+ 2 (\lambda_{N_c}-\lambda) \int_\Gamma u_{N_c}z_{N_c}. \label{eq:new36}
\end{eqnarray}
On the other hand, we have for all $v_{N_c} \in V_{N_c}$ such that
$\|v_{N_c}\|_{L^2_\#}={\mathcal N}^{1/2}$, 
$$
\int_\Gamma u_{N_c} (u_{N_c}-v_{N_c}) =\cN - \int_\Gamma  u_{N_c}
v_{N_c}
= \frac 12 \|u_{N_c}-v_{N_c}\|_{L^2_\#}^2.
$$
Using (\ref{eq:fff}), (\ref{eq:estim_D3}), (\ref{eq:pre_estim_lambda})
with $r=0$ and the above
equality, we therefore obtain for all $v_{N_c} \in V_{N_c}$ such
 that $\|v_{N_c}\|_{L^2_\#}=\cN^{1/2}$,
 \begin{eqnarray}
&& 
\!\!\!\!\!\!\!\!\!\!\!\!\!
\left| \langle ({E^{\rm TFW}}''(u) - 2\lambda)(u_{N_c}-u),(u_{N_c}-v_{N_c})
  \rangle_{H^{-1}_\#,H^1_\#}\right|
\nonumber \\
&& \le  C \bigg( \|u_{N_c}-u\|^2_{H^1_\#} \, \| u_{N_c}-v_{N_c}
\|_{H^1_\#}  \nonumber \\
&& \qquad  \quad + 
\left( \|u_{N_c}-u\|_{H^1_\#}^2 + \|u_{N_c}-u\|_{L^{2}_\#} \right) 
\| u_{N_c}-v_{N_c} \|_{L^2_\#}^2  \bigg). \label{eq:4:3:H2}
 \end{eqnarray}
Therefore, for $N_c$ large enough, we have for all
$v_{N_c} \in V_{N_c}$ such that $\|v_{N_c}\|_{L^2_\#}=\cN^{1/2}$,  
$$
 \| u_{N_c}  - v_{N_c} \|_{H^1_\#} \le   C 
 \left(   \|u_{N_c}-u\|_{H^1_\#}^2  + \|v_{N_c}-u \|_{H^1_\#} \right) .
$$
Together with (\ref{eq:4:1:H1}), this shows that
there exists $N \in \N$ and $C \in \R_+$ such that for all ${N_c} \ge N$, 
$$
\forall v_{N_c} \in V_{N_c} \mbox{ s.t. } \|v_{N_c}\|_{L^2_\#}=\cN^{1/2},
\quad \|u_{N_c} -u \|_{H^1_\#} \le C  \|v_{N_c}-u \|_{H^1_\#}.   
$$
By a classical argument (see e.g. the proof of Theorem~1 in \cite{CCM}),
we deduce from~(\ref{eq:app-Fourier}) and the above inequality that
\begin{equation} \label{eq:estim_H1}
\|u_{N_c} -u \|_{H^1_\#} \le C \min_{v_{N_c} \in V_{N_c}} \|v_{N_c}-u
\|_{H^1_\#} \le C_{1,\epsilon} N_c^{-(m-1/2-\epsilon)},  
\end{equation}
for some constant $C_{1,\epsilon}$ independent of $N_c$.
This completes the proof of the estimate in the $H^1_\#$--norm. We
proceed with the analysis of the $L^2_\#$--norm. 

\medskip

\noindent
For $w \in L^{2}_\#(\Gamma)$, we denote by $\psi_w$ the unique solution to the
adjoint problem 
\begin{equation} \label{eq:adjoint}
\left\{ \begin{array}{l}
\mbox{find } \psi_w \in u^\perp \mbox{ such that} \\
\forall v \in u^\perp, \quad \langle ({E^{\rm TFW}}''(u) -
2\lambda)\psi_w,v \rangle_{H^{-1}_\#,H^1_\#} = 
\langle w,v \rangle_{H^{-1}_\#,H^1_\#},
\end{array} \right.
\end{equation}
where
$$
u^\perp = \left\{ v \in H^1_\#(\Gamma) \; | \; \int_\Gamma uv=0 \right\}.
$$
The function $\psi_w$ is solution to the elliptic equation
\begin{eqnarray*} \!\!\!\!\!\!\!\!\!\!\!\!\!\!\!\!
&& - \frac{C_{\rm W}}2 \Delta \psi_w + \left( V^{\rm
    ion}+V_{u^2}^{\rm Coulomb}+f(u^2)+2f'(u^2)u^2-\lambda \right) \psi_w 
+ 2 V_{u\psi_w}^{\rm Coulomb} u \nonumber \\ & & \qquad\qquad =
2 \left(\int_\Gamma f'(u^2)u^3\psi_w+D_{\Gamma}(u^2,u\psi_w)\right) u +
\frac 12 \left( w - (w,u)_{L^2_\#} u \right), 
\end{eqnarray*}
from which we deduce that if $w \in H^r_\#(\Gamma)$ for some $0 \le r <
m-3/2$, then $\psi_w \in H^{r+2}_\#(\Gamma)$ and  
\begin{equation} \label{eq:borne_psiw}
\| \psi_w \|_{H^{r+2}_\#} \le C_r \| w \|_{H^{r}_\#}, 
\end{equation}
for some constant $C_r$ independent of $w$.
Let $u_{N_c}^*$ be the orthogonal projection, for the $L^2_\#$ inner
product, of  
$u_{N_c}$ on the affine space $\left\{v \in L^2_\#(\Gamma) \, | \,
  \int_\Gamma uv=\cN \right\}$. One has 
$$
u_{N_c} ^* \in H^1_{\#}(\Gamma), \qquad u_{N_c} ^*-u \in u^\perp,  \qquad 
u_{N_c}^*-u_{N_c}  = \frac 1{2\cN} \|u_{N_c}-u \|_{L^2_\#}^2 u, 
$$
from which we infer that
\begin{eqnarray*}
\|u_{N_c} -u\|_{L^2_\#}^2 & = & \int_\Gamma (u_{N_c}-u )(u_{N_c} ^*-u) + 
\int_\Gamma (u_{N_c}-u)(u_{N_c} - u_{N_c} ^*) \\
& = &  \int_\Gamma (u_{N_c}-u)(u_{N_c}^*-u) - \frac 1{2\cN}
\|u_{N_c}-u\|_{L^2_\#}^2 \int_\Gamma (u_{N_c}-u) u  \\
& = &  \int_\Gamma (u_{N_c}-u)(u_{N_c}^*-u) + \frac 1{2\cN}
\|u_{N_c}-u\|_{L^2_\#}^2 \left( \cN - \int_\Gamma u_{N_c} u \right) \\
& = &  \int_\Gamma (u_{N_c}-u)(u_{N_c}^*-u) + \frac 1{4\cN}
\|u_{N_c}-u\|_{L^2_\#}^4 \\ 
& = &   \langle u_{N_c}-u  ,u_{N_c}^*-u \rangle_{H^{-1}_\#,H^1_\#}  + \frac 1{4\cN}
\|u_{N_c}-u\|_{L^2_\#}^4  \\ 
& = &   \langle ({E^{\rm TFW}}''(u)-2\lambda) \psi_{u_{N_c}-u}, u_{N_c}^*-u
\rangle_{H^{-1}_\#,H^1_\#} + \frac 1{4\cN}
\|u_{N_c}-u\|_{L^2_\#}^4 \\
& = &   \langle  ({E^{\rm TFW}}''(u)-2\lambda)  (u_{N_c}-u), \psi_{u_{N_c}-u}
\rangle_{H^{-1}_\#,H^1_\#} +  \frac 1{4\cN}
\|u_{N_c}-u\|_{L^2_\#}^4 \\ 
& & + \frac 1{2\cN} \|u_{N_c}-u\|_{L^2_\#}^2 
 \langle  ({E^{\rm TFW}}''(u)-2\lambda) u , \psi_{u_{N_c}-u} \rangle_{H^{-1}_\#,H^1_\#} 
 \\ 
& = &   \langle  ({E^{\rm TFW}}''(u)-2\lambda)  (u_{N_c}-u), \psi_{u_{N_c}-u}
\rangle_{H^{-1}_\#,H^1_\#} + \frac 1{4\cN} \|u_{N_c}-u \|_{L^2_\#}^4 \\ 
& & +  \frac{2}{\cN} \|u_{N_c}-u\|_{L^2_\#}^2 \left[\int_\Gamma f'(u^2) u^3
  \psi_{u_{N_c}-u} + D_\Gamma(u^2,u \psi_{u_{N_c}-u}) \right] .
\end{eqnarray*}
For all $\psi_{N_c} \in V_{N_c}$, it therefore holds
\begin{eqnarray} \!\!\!\!\!\!\!\!\!\!\!\!\!\!\!\!\!
\|u_{N_c}-u\|_{L^2}^2 & = &   
\langle  ({E^{\rm TFW}}''(u)-2\lambda)  (u_{N_c}-u), \psi_{u_{N_c}-u}-\psi_{N_c}
\rangle_{H^{-1}_\#,H^1_\#} \nonumber \\
& & + \langle  ({E^{\rm TFW}}''(u)-2\lambda)  (u_{N_c}-u),
\psi_{N_c} \rangle_{H^{-1}_\#,H^1_\#} + \frac 1{4\cN} \|u_{N_c}-u
\|_{L^2_\#}^4  \nonumber  \\ 
& & +  \frac{2}{\cN} \|u_{N_c}-u\|_{L^2_\#}^2 \left[\int_\Gamma f'(u^2) u^3
  \psi_{u_{N_c}-u} + D_\Gamma(u^2,u \psi_{u_{N_c}-u}) \right] .
\label{eq:eeeee}
\end{eqnarray}
Using  (\ref{eq:fff}), (\ref{eq:estim_D3}), (\ref{eq:pre_estim_lambda})
with $r=0$ and (\ref{eq:new36}), we obtain that for all $\psi_{N_c} \in V_{N_c}
\cap u^\perp$,
\begin{eqnarray} \!\!\!\!\!\!\!\!\!\!\!\!\!\!\!\!\!\!   
\left| \langle ({E^{\rm TFW}}(u) -2 \lambda)(u_{N_c}-u),\psi_{N_c}
  \rangle_{H^{-1}_\#,H^1_\#}\right|
& \le & C \bigg(  \|u_{N_c}-u\|_{H^{1}_\#}^2 
\nonumber \\ & &
\!\!\!\!\!\!\!\!\!\!\!\!\!\!\!\!\!\!\!\!\!\!\!\!\!\!\!\!\!\!\!\!\!\!\!\!\!\!\!\!\!\!\!\!\!\!\!\!\!\!\!\!\!\!\!\!\!\!\!\!\!\!\!\!\!\!\!\!\!\!\!\!\!\!\!\!\!\!\!\!\!\!\!\!\!\!\!\!\!\!\!\!\!\!\!\!\!\!\!\!\!\!
+ \| u_{N_c}-u \|_{L^{2}_\#} 
\left( \|u_{N_c}-u\|_{H^1_\#}^2 + \|u_{N_c}-u\|_{L^{2}_\#} \right) 
 \bigg) \|\psi_{N_c}\|_{H^1_\#}.  \label{eq:new37}
 \end{eqnarray}
Let us denote by $\Pi^1_{V_{N_c} \cap u^\perp}$ the orthogonal projector
on $V_{N_c} \cap u^\perp$ for the $H^1_\#$ inner product and by 
$\psi_{N_c}^0 = \Pi^1_{V_{N_c} \cap u^\perp} \psi_{u_{N_c}-u}$.
Noticing that 
$$
\|\psi_{N_c}^0\|_{H^1_\#} \le  \|\psi_{u_{N_c}-u}\|_{H^1_\#}
 \le \beta^{-1} M\|u_{N_c}-u\|_{L^2_\#},
$$
we obtain from (\ref{eq:NRJsecondContinue}), (\ref{eq:eeeee}) and
(\ref{eq:new37}) that there exists $N \in \N$ and $C \in 
\R_+$ such that for all ${N_c} \ge N$,
$$
\|u_{N_c}-u\|_{L^2_\#}^2 \le  C \, \bigg(
  \|u_{N_c}-u\|_{L^2_\#} \|u_{N_c}-u\|_{H^{1}_\#}^2 
+   \|u_{N_c}-u\|_{H^1_\#}
\|\psi_{u_{N_c}-u}-\psi_{N_c}^0\|_{H^1_\#} \bigg). 
$$
Lastly, for all $v \in u^\perp$ and all $N_c \in \N^\ast$ 
\begin{equation} \label{eq:minuperp}
\|v - \Pi^1_{V_{N_c} \cap u^\perp} v \|_{H^1_\#}
\le \left(1+\frac{\cN^{1/2}}{2\pi L^{1/2} N_c \int_\Gamma u} \right)  
\|v- \Pi_{N_c} v \|_{H^1_\#},
\end{equation}
so that, in view of (\ref{eq:app-Fourier}) and (\ref{eq:borne_psiw}) 
\begin{eqnarray*}
\|\psi_{u_{N_c}-u}-\psi_{N_c}^0\|_{H^1_\#} &\le& C  \|\psi_{u_{N_c}-u}-
\Pi_{N_c} \psi_{u_{N_c}-u} \|_{H^1_\#} \\ &\le& C N_c^{-1}
\|\psi_{u_{N_c}-u} \|_{H^2_\#} \\ &\le &C N_c^{-1} \|u_{N_c}-u\|_{L^2_\#}.
\end{eqnarray*}
Therefore, 
\begin{eqnarray*}
\|u_{N_c}-u\|_{L^2_\#} & \le &  C \,   \bigg(
 \|u_{N_c}-u\|_{H^{1}_\#}^2  +  N_c^{-1} \|u_{N_c}-u\|_{H^1_\#}  \bigg)
 \\
& \le & C_{0,\epsilon} N_c^{-(m+1/2-\epsilon)}.
\end{eqnarray*}
By means of the inverse inequality
\begin{equation} \label{eq:inv_ineq}
\forall v_{N_c} \in V_{N_c}, \quad 
\|v_{N_c}\|_{H^r_\#} \le \left( \frac{2\pi}L \right)^{(r-s)}  {N_c}^{r-s} \|v_{N_c}\|_{H^s_\#}, 
\end{equation}
which holds true for all $s \le r$ and all ${N_c} \ge 1$,
we obtain that
\begin{equation}  \label{eq:error_H1_F}
\|u_{N_c}-u\|_{H^s_\#}  \le  C_{s,\epsilon} N_c^{-(m-s+1/2-\epsilon)} \qquad 
\mbox{for all } 0 \le s < m+1/2.
\end{equation}
To complete the first part of the proof of Theorem~\ref{Th:TFW}, we
still have to compute the $H^{-r}_\#$-norm of the error $(u_{N_c}-u)$
for $0 < r < m-3/2$. Let $w \in H^{r}_\#(\Gamma)$. Proceeding as above
we obtain   
\begin{eqnarray}
 \int_\Gamma w (u_{N_c}-u) & = &
\langle  ({E^{\rm TFW}}''(u)-2\lambda)(u_{N_c}-u), \Pi^1_{V_{N_c} \cap
  u^\perp} \psi_{w} 
\rangle_{H^{-1}_\#,H^1_\#}  \nonumber \\
& & + \langle  ({E^{\rm
    TFW}}''(u)-2\lambda)(u_{N_c}-u),\psi_{w}-\Pi^1_{V_{N_c} 
  \cap u^\perp}\psi_{w} 
\rangle_{H^{-1}_\#,H^1_\#} \nonumber \\ 
& &+ \frac{2}{\cN} \|u_{N_c}-u\|_{L^2_\#}^2\left[ \int_\Gamma f'(u^2) u^3
  \psi_{w}+ D_\Gamma(u^2,u\psi_w) \right] 
\nonumber\\&&\qquad- \frac 1{2\cN} \|u_{N_c}-u \|_{L^2_\#}^2 \int_\Gamma uw.
\label{eq:intom} 
\end{eqnarray}
Combining
 (\ref{eq:NRJsecondContinue}), (\ref{eq:borne_psiw}), (\ref{eq:new37}), 
(\ref{eq:minuperp}), (\ref{eq:error_H1_F}) and (\ref{eq:intom}), we
obtain that there exists a constant $C \in \R_+$ such that for all
${N_c}$ large enough and all $w \in H^{r}_\#( \Gamma)$,
\begin{eqnarray*}
\int_ \Gamma w (u_{N_c}-u) & \le & C' \left( \|u_{N_c}-u\|_{H^1_\#}^2 
+ {N_c}^{-(r+1)} \|u_{N_c}-u\|_{H^1_\#} \right) \|w\|_{H^r_\#} \\
 & \le & C_{-r,\epsilon} \, N_c^{-(m+r+1/2-\epsilon)} \|w\|_{H^{r}_\#} .
\end{eqnarray*}

Therefore
\begin{equation} \label{eq:Hm1boundFourier}
\|u_{N_c}-u\|_{H^{-r}_\#} = \sup_{w \in H^{r}_\#( \Gamma) \setminus
  \left\{0\right\}} \frac{\dps \int_ \Gamma w (u_{N_c}-u)}{\|w\|_{H^{r}_\#}}
\le C_{-r,\epsilon}  \, N_c^{-(m+r+1/2-\epsilon)} ,
\end{equation}
for some constant $C_{-r,\epsilon}  \in \R_+$ independent of ${N_c}$.
Using (\ref{eq:pre_estim_lambda}), (\ref{eq:estim_H1}) and (\ref{eq:Hm1boundFourier}), we end up with 
$$
|\lambda_{N_c}-\lambda| \le  C_{\epsilon}  N_c^{-(2m-1-\epsilon)} .
$$

\medskip
\subsection{A priori estimates for the full discretization.}

Let us now turn to the pseudospectral approximation (\ref{eq:minTFWuN})
of (\ref{eq:minTFWu}). First, we notice that
\begin{eqnarray*}
\frac{C_{\rm W}}2 \|\nabla u_{N_c,N_g}\|_{L^2_\#}^2 - \|V^{\rm
  ion}\|_{L^\infty} {\mathcal N} &\le & E^{\rm TFW}_{N_g}(u_{N_c,N_g})
\\ & \le & E^{\rm TFW}_{N_g}({\mathcal N}^{1/2}|\Gamma|^{-1/2}) \\
& \le &C_{\rm TF} {\mathcal N}^{5/3} |\Gamma|^{-2/3} + \|V^{\rm
  ion}\|_{L^\infty} {\mathcal N},
\end{eqnarray*}
from which we infer that $u_{N,N_g}$ is uniformly bounded in
$H^1_\#(\Gamma)$. We then see that
\begin{eqnarray*}
\lambda_{N_c,N_g} &=& \cN^{-1} \bigg[
\frac{C_{\rm W}}2 \int_\Gamma |\nabla u_{N_c,N_g}|^2 + 
\int_\Gamma \cI_{N_g}( V^{\rm
  ion}|u_{N_c,N_g}|^2+f(|u_{N_c,N_g}|^2)|u_{N_c,N_g}|^2)  \\ && \qquad\qquad
+D_\Gamma(|u_{N_c,N_g}|^2,|u_{N_c,N_g}|^2) \bigg].
\end{eqnarray*}
Using (\ref{eq:bound_INg1}), (\ref{eq:bound_INg2}) and
(\ref{eq:estim_D1}), we obtain that $\lambda_{N_c,N_g}$ 
also is uniformly bounded. Now,
\begin{eqnarray}
\Delta u_{N_c,N_g} & = & 2 C_{\rm W}^{-1} \Pi_{N_c}
\left(\cI_{N_g}\left(f(|u_{N_c,N_g}|^2)u_{N_c,N_g}\right) \right) + 
2 C_{\rm W}^{-1} \Pi_{N_c} \left( \cI_{N_g}\left(
  V^{\rm ion}u_{N_c,N_g} \right) \right) 
\nonumber \\ && + 2 C_{\rm W}^{-1}
\Pi_{N_c} \left(  V^{\rm Coulomb}_{|u_{N_c,N_g}|^2}  u_{N_c,N_g} \right) - 2 C_{\rm W}^{-1} 
\lambda_{N_c,N_g} u_{N_c,N_g}, \label{eq:EDP_uNcNg}
\end{eqnarray}
and we deduce from (\ref{eq:exact_integration}), (\ref{eq:bound_INg1})
and (\ref{eq:ineg_INg_4}) that 
\begin{eqnarray*}
\left\| \Pi_{N_c} \left( \cI_{N_g}\left( f(|u_{N_c,N_g}|^2)u_{N_c,N_g}
    \right) \right) \right\|_{L^2_\#}
& \le & \left( \int_\Gamma
  \left(\cI_{N_g}(f(|u_{N_c,N_g}|^2))\right)^2 |u_{N_c,N_g}|^2 \right)^{1/2} \\
& = & \left( \sum_{x \in \cG_{N_g} \cap \Gamma}
 \left( \frac{L}{N_g} \right)^3
  f(|u_{N_c,N_g}(x)|^2)^2|u_{N_c,N_g}(x)|^2\right)^{1/2} \\
& \le & \frac 53 C_{\rm TF} \|u_{N_c,N_g}\|_{L^\infty}^{1/3} 
   \left( \sum_{x \in \cG_{N_g}  \cap \Gamma}
\left( \frac{L}{N_g} \right)^3 |u_{N_c,N_g}(x)|^4 \right)^{1/2} \\
& = & \frac 53 C_{\rm TF} \|u_{N_c,N_g}\|_{L^\infty}^{1/3}
\|u_{N_c,N_g}\|_{L^4_\#}^2, 
\end{eqnarray*}
and that
\begin{eqnarray*}
\|\Pi_{N_c} \left( \cI_{N_g}\left(
  V^{\rm ion}u_{N_c,N_g} \right) \right) \|_{L^2_\#} & \le & 
\|\Pi_{2N_c} \left( \cI_{N_g}\left(
  V^{\rm ion}u_{N_c,N_g} \right) \right) \|_{L^2_\#} \\
& \le & \left( \int_\Gamma \cI_{N_g}(|V^{\rm ion}|^2|u_{N_c,N_g}|^2)
\right)^{1/2} \\
& \le & \|V^{\rm ion}\|_{L^\infty} \cN^{1/2}.
\end{eqnarray*}
Besides, using (\ref{eq:estim_D4}), 
\begin{eqnarray*}
\|\Pi_{N_c} \left( V^{\rm Coulomb}_{|u_{N_c,N_g}|^2} u_{N_c,N_g} \right)\|_{L^2_\#} & \le &
\|V^{\rm Coulomb}_{|u_{N_c,N_g}|^2} u_{N_c,N_g} \|_{L^2_\#} \\
 & \le & \cN^{1/2} \|V^{\rm Coulomb}_{|u_{N_c,N_g}|^2} \|_{L^\infty} \\
& \le & \cN^{1/2} \|u_{N_c,N_g}\|_{L^4_\#}^2.
\end{eqnarray*}
As $u_{N_c,N_g}$ is uniformly bounded in $H^1_\#(\Gamma)$, and therefore
in $L^4_\#(\Gamma)$, we get 
\begin{eqnarray*}
\|u_{N_c,N_g}\|_{H^2_\#} & = & \left( \|u_{N_c,N_g}\|_{L^2_\#}^2 +
  \|\Delta u_{N_c,N_g}\|_{L^2_\#}^2  \right)^{1/2} \\
& \le & C \left( 1 +   \|u_{N_c,N_g}\|_{L^\infty}^{1/3} \right) \\
& \le & C \left( 1 +   \|u_{N_c,N_g}\|_{H^2_\#}^{1/3} \right).
\end{eqnarray*}
Therefore $u_{N_c,N_g}$ is uniformly bounded in $H^2_\#(\Gamma)$, hence
in $L^\infty(\R^3)$.

Returning to (\ref{eq:EDP_uNcNg}) and using
(\ref{eq:ineg_INg_5}), (\ref{eq:hypothesis}), and a bootstrap argument, we conclude that
$u_{N_c,N_g}$ is in fact uniformly bounded in $H^{7/2+\epsilon}_\#(\Gamma)$. 

Next, using (\ref{eq:pre_uniqueness}),
\begin{eqnarray*}
\frac\gamma 2 \|u_{N_c,N_g}-u_{N_c}\|_{H^1_\#}^2 & \le & E^{\rm TFW}(u_{N_c,N_g})-E^{\rm TFW}(u_{N_c}) \\
& = & E^{\rm TFW}_{N_g}(u_{N_c,N_g})-E^{\rm TFW}_{N_g}(u_{N_c}) \\ && 
+ \int_\Gamma ((1-\cI_{N_g})(V))(|u_{N_c,N_g}|^2-|u_{N_c}|^2) \\ && 
+ \int_\Gamma (1-\cI_{N_g})(F(|u_{N_c,N_g}|^2)-F(|u_{N_c}|^2)) \\
& \le & \int_\Gamma ((1-\cI_{N_g})(V))(|u_{N_c,N_g}|^2-|u_{N_c}|^2) \\ && 
+ \int_\Gamma (1-\cI_{N_g})(F(|u_{N_c,N_g}|^2)-F(|u_{N_c}|^2)). 
\end{eqnarray*}
Let $g(t,t')=\frac{F(t'^2)-F(t^2)}{t'-t}$. For $N_c$ large enough,
$u_{N_c}$ is uniformly bounded away from zero; besides,
both $u_{N_c}$ and $u_{N_c,N_g}$ are uniformly bounded in
$H^{7/2+\epsilon}_\#(\Gamma)$. Therefore, $g(u_{N_c},u_{N_c,N_g})$ is uniformly bounded in
$H^{7/2+\epsilon}_\#(\Gamma)$. This implies that the Fourier coefficients of
$g(u_{N_c},u_{N_c,N_g})$ go to zero faster that $|k|^{-7/2}$, which
in turn implies, using (\ref{eq:ineg_INg_2}) and (\ref{eq:ineg_INg_6}), that
\begin{eqnarray}
& & \left| \int_\Gamma (1-\cI_{N_g})(F(|u_{N_c,N_g}|^2)-F(|u_{N_c}|^2))
\right| \nonumber \\
& & \qquad = 
\left|\int_\Gamma
  (1-\cI_{N_g})\left(g(u_{N_c},u_{N_c,N_g})\right) \,
  (u_{N_c,N_g}-u_{N_c}) 
\right| \nonumber \\
& & \qquad \le \left\|\Pi_{N_c} \left(
    (1-\cI_{N_g})\left(g(u_{N_c},u_{N_c,N_g})\right) \right)
\right\|_{L^2_\#} \|u_{N_c,N_g}-u_{N_c}\|_{L^2_\#}
 \nonumber \\
& & \qquad \le C N_c^{3/2} N_g^{-7/2} \|u_{N_c,N_g}-u_{N_c}\|_{L^2_\#}.
\label{eq:bound_CC1}
\end{eqnarray}
On the other hand,
\begin{eqnarray*}
&& \left| \int_\Gamma ((1-\cI_{N_g})(V))(|u_{N_c,N_g}|^2-|u_{N_c}|^2)
\right| \\ && \qquad \le 
\|\Pi_{2N_c} ((1-\cI_{N_g})(V))\|_{L^2_\#}
\|u_{N_c,N_g}+u_{N_c}\|_{L^\infty} \|u_{N_c,N_g}-u_{N_c}\|_{L^2_\#} \\
&& \qquad \le C N_c^{3/2} N_g^{-m} \|u_{N_c,N_g}-u_{N_c}\|_{L^2_\#}.
\end{eqnarray*}
Therefore, 
\begin{eqnarray}
 \|u_{N_c,N_g}-u_{N_c}\|_{H^1_\#} & \le & C N_c^{3/2} N_g^{-7/2}.
\label{eq:bound_CC2}
\end{eqnarray}
We then deduce from (\ref{eq:bound_CC2}) and the inverse inequality
(\ref{eq:inv_ineq}) that $(u_{N_c,N_g})_{N_c,N_g \ge 4N_c+1}$ converges to $u$ in
$H^2_\#(\Gamma)$, and therefore in $L^\infty(\R^3)$. It follows that for
$N_c$ large enough, $u_{N_c,N_g}$ is bounded away from zero, which,
together with (\ref{eq:EDP_uNcNg}), implies that $(u_{N_c,N_g})_{N_c,N_g
  \ge 4N_c+1}$  is bounded in $H^{m+1/2-\epsilon}_\#(\Gamma)$. The
estimates (\ref{eq:bound_CC1}) and (\ref{eq:bound_CC2}) can therefore be
improved, yielding 
$$
\left| \int_\Gamma (1-\cI_{N_g})(F(|u_{N_c,N_g}|^2)-F(|u_{N_c}|^2))
\right|  \le C N_c^{3/2} N_g^{-(m+1/2-\epsilon)} \|u_{N_c,N_g}-u_{N_c}\|_{L^2_\#}.
$$
and
$$
 \|u_{N_c,N_g}-u_{N_c}\|_{H^1_\#}  \le  C N_c^{3/2} N_g^{-m}.
$$
We deduce (\ref{eq:estim_NcNg_u}) from the inverse inequality
(\ref{eq:inv_ineq}). For $N_c$ large enough, $u_{N_c,N_g}$ is bounded
away from zero, so that $f(|u_{N_c,N_g}|^2)$ is uniformly
bounded in $H^{m+1/2-\epsilon}_\#(\Gamma)$. Therefore, the $k^{\rm th}$ Fourier
coefficient of $(V^{\rm ion}+f(|u_{N_c,N_g}|^2))$ is bounded by 
$C|k|^{-m}$ where the constant $C$ does not depend on $N_c$ and $N_g$. 
Using the equality
\begin{eqnarray*}
\lambda_{N_c,N_g}-\lambda_{N_c} & = & \cN^{-1} \bigg[ \langle
({\cal H}^{{\rm TFW}}_{|u_{N_c}|^2}-\lambda_{N_c})(u_{N_c,N_g}-u_{N_c}),(u_{N_c,N_g}-u_{N_c})
\rangle_{H^{-1}_\#,H^1_\#} \\
&& \qquad - \int_\Gamma (1-\cI_{N_g})(V^{\rm ion}+
f(|u_{N_c,N_g}|^2))|u_{N_c,N_g}|^2 \\
&& \qquad + D_\Gamma(|u_{N_c,N_g}|^2,|u_{N_c,N_g}|^2-|u_{N_c}|^2) \\
&& \qquad 
+ \int_\Gamma (f(|u_{N_c,N_g}|^2)-f(|u_{N_c}|^2)) |u_{N_c,N_g}|^2 \bigg],
\end{eqnarray*}
(\ref{eq:estim_NcNg_u}) and (\ref{eq:estim_D3}),
 we obtain  (\ref{eq:estim_NcNg_lambda}). A similar calculation leads to 
(\ref{eq:estim_NcNg_I}).

Lastly, we have for all $v_{N_c} \in V_{N_c}$, 
\begin{eqnarray}
&& \!\!\!\!\!\!\!\!\!\!\!\!\!\!\!\!
E^{\rm TFW}_{N_g}(v_{N_c})- E^{\rm TFW}_{N_g}(u_{N_c,N_g}) \\
& = & \langle
(\widetilde
{\cal H}^{{\rm TFW}}_{u_{N_c,N_g}}-\lambda_{N_c,N_g})(v_{N_c}-u_{N_c,N_g}),(v_{N_c}-u_{N_c,N_g})\rangle_{H^{-1}_\#,H^1_\#}
\nonumber 
\\
&& + \frac 12 D_\Gamma(|v_{N_c}|^2-|u_{N_c,N_g}|^2,|v_{N_c}|^2-|u_{N_c,N_g}|^2)\nonumber \\
&& + \sum_{x \in \cG_{N_g} \cap \Gamma}
\left( \frac{L}{N_g} \right)^3 \left(
F(|v_{N_c}(x)|^2)-F(|u_{N_c}(x)|^2)-f(|u_{N_c}(x)|^2)(|v_{N_c}(x)|^2-|u_{N_c}(x)|^2) \right)
\nonumber \\
& \ge &   \langle
(\widetilde
{\cal H}^{{\rm TFW}}_{u_{N_c,N_g}}-\lambda_{N_c,N_g})(v_{N_c}-u_{N_c,N_g}),(v_{N_c}-u_{N_c,N_g})\rangle_{H^{-1}_\#,H^1_\#}.
\end{eqnarray}
As $u_{N_c,N_g}$ converges to $u$ in $H^2_\#(\Gamma)$, the operator $\widetilde
{\cal H}^{{\rm TFW}, N_g}_{|u_{N_c,N_g}|^2}-{\cal H}^{{\rm TFW}}_{\rho^0}$ converges to zero in operator
norm. Reasoning as in the proof of the uniqueness of $u_{N_c}$, we
obtain that for $N_c$ large enough and $N_g \ge 4N_c+1$, we have for all
$v_{N_c} \in V_{N_c}$ such that $\|v_{N_c}\|_{L^2_\#}=\cN^{1/2}$ and 
$(v_{N_c},u_{N_c})_{L^2_\#} \ge 0$,
$$
 \langle
(\widetilde
{\cal H}^{{\rm TFW}}_{u_{N_c,N_g}}-\lambda_{N_c,N_g})(v_{N_c}-u_{N_c,N_g}),(v_{N_c}-u_{N_c,N_g})\rangle_{H^{-1}_\#,H^1_\#}
\ge \frac \gamma 2 \|v_{N_c}-u_{N_c,N_g}\|_{H^1_\#}^2.
$$
Thus the uniqueness of $u_{N_c,N_g}$ for $N_c$ large enough.

\section{Planewave approximation of the Kohn-Sham LDA model}
\label{sec:KS}

The periodic Kohn-Sham LDA model with norm-conserving pseudopotentials~\cite{pseudo} leads to the constrained optimization problem 
\begin{equation} \label{eq:min_KS}
I^{\rm KS} = \inf \left\{ E^{\rm KS}(\Phi), \; \Phi \in {\cal M} \right\}
\end{equation}
where
$$
{\cal M} = \left\{ \Phi=(\phi_1,\cdots,\phi_\cN)^T \in
  (H^1_\#(\Gamma))^\cN \; | \; \int_\Gamma \phi_i \phi_j =
  \delta_{ij}  \right\}, 
$$
 $\cN$ being the number of valence electron pairs in the simulation cell,
and where
\begin{equation} \label{eq:energy_KS}
E^{\rm KS}(\Phi) = \sum_{i=1}^\cN \int_\Gamma |\nabla \phi_i|^2 + 
\int_\Gamma \rho_\Phi V_{\rm local} + 2 \sum_{i=1}^\cN \langle 
\phi_i|V_{\rm nl}|\phi_i\rangle +  J(\rho_\Phi) + E_{\rm xc}^{\rm
  LDA}(\rho_\Phi).  
\end{equation}
The density $\rho_\Phi$ associated with $\Phi$, the Coulomb
energy $J(\rho_\Phi)$ and the LDA exchange-correlation energy $E_{\rm
  xc}^{\rm LDA}(\rho_\Phi)$ are respectively defined as
\begin{eqnarray*}
\rho_\Phi(x) &=& 2 \sum_{i=1}^\cN |\phi_i(x)|^2, \label{eqrhodef}\\
 J(\rho_\Phi) &=& \frac 12 D_\Gamma(\rho_\Phi,\rho_\Phi) = 2\pi
\sum_{k \in \cR^\ast \setminus \left\{0\right\}} |k|^{-2} |\widehat
  {(\rho_\Phi)}_k|^2, \\
E_{\rm xc}^{\rm LDA}(\rho_\Phi) &=& \int_\Gamma
e_{\rm xc}^{\rm LDA}(\rho_{\rm c}(x)+\rho_\Phi(x)) \, dx,   
\end{eqnarray*} 
where $\rho_{\rm c} \ge 0$ is the nonlinear core correction and 
where $e_{\rm xc}^{\rm LDA}(\rho)$ is an approximation of
the exchange-correlation energy per unit volume in a uniform electron gas
with charge density $\rho$ \cite{DreizlerGross}. 

The local and nonlocal contributions to the pseudopotential 
model the interactions between valence electrons on the one hand, and
nuclei and core electrons on the other hand. Troullier-Martins
pseudopotentials \cite{pseudo} constitute a popular class of
pseudopotentials for 
which the Fourier coefficients $\widehat{(V_{\rm local})}_k$ decay as
$|k|^{-m}$ with $m=5$. The nonlocal contribution is defined by  
$$
V_{\rm nl} \phi = \sum_{j=1}^M (\chi_j ,\phi)_{L^2_\#} \, \chi_j,
$$
where the functions $\chi_j$ are regular enough functions of $L^2_\#(\Gamma)$. 
In all what follows, we will assume that 
\begin{equation} \label{eq:decay_V_local}
\exists m > 3, \; C \ge 0 \mbox{ s.t. } \forall k \in \cR^\ast, \; 
| \widehat{(V_{\rm local})}_k | \le C |k|^{-m}
\end{equation}
and that 
\begin{equation} \label{eq:hyp_Vnl}
\forall 1 \le j \le M, \quad \forall \epsilon > 0, \quad  \chi_j\in
H^{m-3/2-\epsilon}_\#(\Gamma). 
\end{equation}
The function $\rho \mapsto e_{\rm
  xc}^{\rm LDA}(\rho)$ 
does not have a simple analytical expression. Although this function is of
class $C^\infty$ on the open set $(0,+\infty)$, DFT simulation softwares
make use of approximate functions which are $C^\infty$ on $(0,\rho_*)
\cup (\rho_*,+\infty)$ but only $C^{1,1}$ in the neighborhood
of the density $\rho_* := 3/(4\pi)$ (atomic units) \cite{DreizlerGross}. In  
order not to deteriorate the convergence rate of the pseudospectral
approximation, it is better to ressort to more regular approximations of
the function $e^{\rm LDA}$ (see~\cite{CCEM}). We will assume 
here that 
\begin{eqnarray} &&
\mbox{the function } \rho \mapsto e_{\rm xc}^{\rm LDA}(\rho)
\mbox{ is in $C^1([0,+\infty)) \cap C^{[m]}((0,+\infty))$},\label{eq:hyp_epsilon_xc} \\
&& e_{\rm xc}^{\rm LDA}(0) = 0, \quad \frac{de_{\rm xc}^{\rm LDA}}{d\rho}(0) = 0,
\label{eq:hyp_epsilon_xc02}
\end{eqnarray}
(where $[m]$ denotes the
  integer part of $[m]$) and that there exists $0 < \alpha \le 1$ and $C
  \in \R_+$ such that
\begin{equation}\label{eq:hyp_epsilon_xc_2}
\forall \rho \in \R_+ \setminus \left\{0\right\}, \quad 
\left| \frac{d^2e_{\rm xc}^{\rm LDA}}{d\rho^2}(\rho) \right|
+ \left| \rho\frac{d^3e_{\rm xc}^{\rm LDA}}{d\rho^3}(\rho) \right|
\le C (1+\rho^{\alpha-1}).
\end{equation} 
Note that the X$\alpha$ exchange-correlation functional ($e_{\rm
  xc}^{{\rm X}\alpha}(\rho)=-C_{\rm X} \rho^{4/3}$, where $C_{\rm X} > 0$ is a
given constant) satisfies the assumptions
(\ref{eq:hyp_epsilon_xc})-(\ref{eq:hyp_epsilon_xc_2}) with $\alpha =
1/3$. Let us also
remark that (\ref{eq:hyp_epsilon_xc}) and (\ref{eq:hyp_epsilon_xc_2})
imply that 
\begin{equation} \label{eq:exc-regularity}
e_{\rm xc}^{\rm LDA} \in C^{1,\alpha}([0,L]) \quad \mbox{for each } L > 0,
\end{equation}
 a property we
will make use of below. Lastly, we assume for simplicity that
\begin{equation} \label{eq:rhoc}
  \rho_{\rm c} \in C^\infty_\#(\Gamma).
\end{equation}

It is easy to prove that under assumptions
(\ref{eq:decay_V_local})-(\ref{eq:rhoc}),  
(\ref{eq:min_KS}) has a minimizer
$\Phi^0=(\phi_1^0,\cdots,\phi_\cN^0)^T$ with density $\rho^0=
\rho_{\Phi^0}$.  The regularity assumptions on
$V_{\rm local}$, on $e_{\rm xc}^{\rm LDA}$ and on the functions
$\chi_j$ allow to state that the minimizer $\Phi^0$ is in $[H^{3}_\#(\Gamma)]^\cN$, and even in $[H^{m+1/2-\epsilon}_\#(\Gamma)]^\cN$ for any $\epsilon > 0$, if at least one of the following conditions is satisfied: $e_{\rm xc}^{\rm LDA} \in C^{[m]}([0,+\infty))$ or $\rho_{\rm c}+\rho^0 > 0$ in $\R^3$. The former condition is not satisfied for usual LDA exchange-correlation functionals. On the other hand, it is satisfied for the Hartree (also called reduced Hartree-Fock) model, for which $e_{\rm xc}^{\rm LDA} = 0$. The latter condition seems to be satisfied in practice, but we were not able to establish it rigourously.

Let us introduce the Kohn-Sham Hamiltonian 

\begin{eqnarray*}
{\cal H}^{{\rm KS}}_{\rho^0} &=& -\frac 12 \Delta  + \left( V_{\rm local}+V^{\rm
    Coulomb}_{\rho^0}+ \frac{de_{\rm
    xc}^{\rm LDA}}{d\rho} (\rho_{\rm c}+\rho^0)\right)    +
V_{\rm nl}.
\\
&=& h + {\cal V}_{\rho^0}
\end{eqnarray*}
where 
\begin{equation}\label{def:h}
h = -\frac 12 \Delta  +  V_{\rm local}    +
V_{\rm nl},
\end{equation}
and
\begin{equation}\label{def:K}
{\cal V}_{\rho^0} =  V^{\rm
    Coulomb}_{\rho^0}+\frac{de_{\rm
    xc}^{\rm LDA}}{d\rho} (\rho_{\rm c}+\rho^0) .
    \end{equation}
    
We notice that ${E^{\rm KS}}'(\Phi^0) = 4 {\cal H}^{{\rm KS}}_{\rho^0}
\Phi^0$ in $(H^{-1}_\#(\Gamma))^\cN$ and
thus the Euler equations associated with the
minimization problem (\ref{eq:min_KS}) read 
$$
\forall 1\le i \le \cN, \quad 
{\cal H}^{{\rm KS}}_{\rho^0} \phi_i^0 = \sum_{j=1}^\cN \lambda_{ij}^0 \phi_j^0, 
$$
where the $\cN \times \cN$ matrix $\Lambda_\cN^0=(\lambda_{ij}^0)$,
which is the Lagrange multiplier of the matrix constraint $\int_\Gamma
\phi_i\phi_j= \delta_{ij}$, is symmetric. 

In fact, (\ref{eq:min_KS}) has an infinity of
minimizers since any unitary transform of the Kohn-Sham orbitals
$\Phi^0$ is also a minimizer of the Kohn-Sham energy. This is a
consequence of the following invariance property:
\begin{equation} \label{eq:invariance_property}
\forall \Phi \in {\cal M}, \quad \forall U \in \cU(\cN), \quad 
U\Phi \in {\cal M} \mbox{ and } E^{\rm KS}(U\Phi) = E^{\rm KS}(\Phi),
\end{equation}
where $\cU(\cN)$ is the group of the real unitary matrices:
$$
\cU(\cN) = \left\{U \in \R^{\cN\times\cN} \; | \; U^TU=1_\cN \right\},
$$
$1_\cN$ denoting the identity matrix of rank $\cN$.
This invariance
can be exploited to diagonalize the matrix of the Lagrange multipliers
of the orthonormality constraints (see e.g. \cite{DreizlerGross}),
yielding the existence of a minimizer (still denoted by~$\Phi^0$) with
same density $\rho^0$, such that
 \begin{equation}\label{supeq64}
{\cal H}^{{\rm KS}}_{\rho^0} \phi_i^0  = \epsilon_i^0 \phi_i^0,
 \end{equation}
for some $\epsilon_1^0 \le \epsilon_2^0 \le \cdots \le
\epsilon_\cN^0$. 

\begin{remark} The Kohn-Sham Hamiltonian ${\cal H}^{{\rm KS}}_{\rho^0}$
  is an unbounded self-adjoint operator on $L^2_\#(\Gamma)$, bounded
  below, with compact resolvent. Its spectrum therefore is purely
  discrete. More precisely, it is composed of a
  increasing sequence of eigenvalues going to infinity, each of these
  eigenvalues being of finite multiplicity. It is not known whether
  $\epsilon_1^0$, ..., $\epsilon_\cN^0$ are the lowest eigenvalues (counted with
  their multiplicities) of ${\cal H}^{{\rm KS}}_{\rho^0}$ ({\it Aufbau}
  principle). However, it seems to be most often (though not always) the
  case in practice. On the other hand, the {\it Aufbau} principle is always
  satisfied for the {\em extended} Kohn-Sham model, for which the first
  order optimality conditions read
$$
\left\{ \begin{array}{l}
{\cal H}^{{\rm KS}}_{\rho^0} \phi_i^0  = \epsilon_i^0 \phi_i^0 \\ \\
\dps \rho^0(x) = 2 \sum_{i=1}^{+\infty} n_i |\phi_i^0(x)|^2 \\
\dps \int_{\Gamma} \phi_i^0\phi_j^0 = \delta_{ij}, \quad 1 \le i,j < +\infty \\
n_i = 1 \mbox{ if } \epsilon_i^0 < \epsilon_{\rm F}, \quad 
n_i = 0 \mbox{ if } \epsilon_i^0 > \epsilon_{\rm F}, \quad 
\dps 0 \le n_i \le 1 \mbox{ if } \epsilon_i^0 = \epsilon_{\rm F}, \quad
\sum_{i=1}^{+\infty} n_i = \cN,
\end{array}
\right.
$$
where $\epsilon_{\rm F}$ is the Fermi level
(see~\cite{CDKLBM} for details). In this article, we focus on the
standard Kohn-Sham model with integer occupation numbers. We do not need to
assume that the Aufbau principle is satisfied, but our analysis requires
some coercivity assumption on the second order condition at $\Phi^0$
(see (\ref{eq:coerc})). 
\end{remark}

For each $\Phi = (\phi_1,\cdots,\phi_\cN)^T \in {\cal M}$, we denote by
$$
T_{\Phi} {\cal M} = \left\{ (\psi_1, \cdots, \psi_\cN)^T \in
  (H^1_\#(\Gamma))^\cN \; | \; \int_\Gamma \phi_i\psi_j+\psi_i\phi_j=0 \right\}
$$ 
the tangent space to ${\cal M}$ at $\Phi$, and by 
$$
\Phi^{\perp\!\!\!\perp} = \left\{ \Psi = (\psi_1,\cdots,\psi_\cN)^T \in
  (H^1_\#(\Gamma))^\cN \; | \;  \int_\Gamma \phi_i \psi_j = 0 \right\}.
$$
Let us recall (see e.g. Lemma 4 in \cite{MT}) that 
$$
T_{\Phi} {\cal M} = {\cal A} \Phi \oplus \Phi^{\perp\!\!\!\perp},
$$
where ${\cal A} = \left\{A \in \R^{\cN \times \cN} \; | \;
  A^T=-A\right\}$ is the space of the $\cN \times \cN$ antisymmetric
real matrices.

Since the problem we are considering is a minimization problem, the
second order condition further states 
$$
\forall W \in T_{\Phi^0} {\cal M}, \quad a_{\Phi^0}(W,W) \ge 0,
$$
where 
\begin{eqnarray}\label{eq:defaKS}
a_{\Phi^0}(\Psi,\Upsilon) &=& \frac{1}{4} {E^{\rm KS}}''(\Phi^0)(\Psi,\Upsilon) - \sum_{i=1}^N \epsilon_i^0
\int_\Gamma \psi_i\upsilon_i \\ &=&  \sum_{i=1}^\cN \langle ({\cal
  H}^{{\rm KS}}_{\rho^0}-\epsilon_i^0) \psi_i,
\upsilon_i\rangle_{H^{-1}_\#,H^1_\#} +  4 \sum_{i,j=1}^\cN
D_{\Gamma}(\phi_i^0\psi_i,\phi_j^0\upsilon_j) \nonumber
\\ & &+  4 \sum_{i,j=1}^\cN \int_\Gamma \frac{d^2e_{\rm
    xc}^{\rm LDA}}{d\rho^2}(\rho_{\rm c}+\rho^0)  \phi_i^0\psi_i \phi_j^0\upsilon_j.\label{eq:defaKS2}
\end{eqnarray}

It follows from the invariance property (\ref{eq:invariance_property}) that
$$
a_{\Phi^0}(\Psi,\Psi) = 0 \quad \mbox{for all } \Psi \in {\cal A} \Phi^0.  
$$
This leads us, as in \cite{MT}, to make the assumption that $a_{\Phi^0}$
is positive definite on
$\Phi^{0,\perp\!\!\!\perp}$ , so that, as in
Proposition~1 in \cite{MT}, $a_{\Phi^0}$ is coercive on
$\Phi^{0,\perp\!\!\!\perp}$ (for the $H^1_\#$ norm). Thus, in all what follows, we assume that
there exists a positive constant $c_{\Phi^0}$ such that 
\begin{equation} \label{eq:coerc}
\forall\Psi\in\Phi^{0,\perp\!\!\!\perp}, \quad a_{\Phi^0}(\Psi,\Psi) \ge
c_{\Phi^0} \|\Psi\|^2_{H^1_\#}. 
\end{equation}
In the linear framework ($J=0$ and $E_{\rm xc}^{\rm LDA}=0$ in
(\ref{eq:energy_KS})), this condition amounts to assuming that there is
a gap between the lowest $\cN^{\rm th}$ and $(\cN+1)^{\rm st}$ eigenvalues
of the linear self-adjoint operator $h=-\frac 12\Delta + V_{\rm local} +
V_{\rm nl}$.

The planewave approximation of (\ref{eq:min_KS}) 
reads
\begin{equation} \label{eq:min_KS_PW}
I^{\rm KS}_{N_c,N_g} = \inf \left\{ E^{\rm KS}_{N_g}(\Phi_{N_c}), \;
  \Phi_{N_c} \in  V_{N_c}^\cN \cap {\cal M} \right\}, 
\end{equation}
where 
\begin{eqnarray} 
E^{\rm KS}_{N_g}(\Phi) &=& \sum_{i=1}^\cN \int_\Gamma |\nabla
\phi_{i}|^2 +  
\int_\Gamma \rho_{\Phi} V_{\rm local} 
+ 2\sum_{i=1}^\cN \langle \phi_{i}|V_{\rm nl}|\phi_{i}\rangle \nonumber \\ & & 
+
J(\rho_{\Phi}) + \int_\Gamma \cI_{N_g}(e_{\rm xc}^{\rm
  LDA}(\rho_{\rm c}+\Pi_{2N_c}\rho_{\Phi})) . \label{eq:KS_energy}
\end{eqnarray} 
Here $N_c$ is a given positive integer, equal to $[\sqrt{2E_{\rm c}} \,
L/2\pi]$, 
$E_{\rm c}$ denoting the so-called cut-off energy, and $N_g \ge 4N_c+1$ is the
number of integration points per direction used to evaluate the
exchange-correlation contribution. The energy $E^{\rm KS}_{N_g}(\Phi)$ is defined for
each $\Phi \in {\cal M}$. For $\Phi \in V_{N_c}^\cN \cap {\cal
  M}$, $\Pi_{2N_c}\rho_{\Phi}=\rho_{\Phi}$, so that on this set, 
$E^{\rm KS}_{N_g}$ differs from $E^{\rm KS}$ only by the presence of the
Fourier interpolation operator $\cI_{N_g}$ in the exchange-correlation
functional. Let us mention that in practice, the terms involving the
local and nonlocal components of the pseudopotential are also computed by some
interpolation procedure. However, these terms are calculated using
spherical harmonics and a very fine one dimensional radial grid, so that
the resulting integration error is usually much smaller than the
interpolation error on the exchange-correlation term. Note that, in addition,
the pseudopotential gives rise to linear contributions that can be
computed very accurately once and for all (and not at each
iteration of the self-consistent algorithm). We postpone the analysis of
(\ref{eq:min_KS_PW}) to a forthcoming article~\cite{CCEM}, and focus
here on the variational approximation
\begin{equation} \label{eq:min_KS_Nc}
I^{\rm KS}_{N_c} = \inf \left\{ E^{\rm KS}(\Phi_{N_c}), \; 
\Phi_{N_c} \in  V_{N_c}^\cN \cap {\cal M} \right\} 
\end{equation}
of (\ref{eq:min_KS}). The unitary invariance of
the Kohn-Sham model must 
be taken into account in the derivation of optimal {\it a priori} error
estimates. One way to take this invariance into account is to work with
density matrices (see e.g. \cite{CDKLBM}). An alternative is to define
for each $\Phi \in {\cal M}$ the set
$$
{\cal M}^\Phi := \left\{ \Psi \in {\cal M} \; | \; 
\|\Psi-\Phi\|_{L^2_\#} =  \min_{U \in \cU(\cN)} \|U \Psi - \Phi\|_{L^2_\#}
\right\}, 
$$
and to use the fact that all the local minimizers of
(\ref{eq:min_KS_Nc}) are obtained by unitary
transforms from the local minimizers of
\begin{equation} \label{eq:min_KS_Nc_U}
I^{\rm KS}_{N_c} = \inf \left\{ E^{\rm KS}(\Phi_{N_c}), \; 
\Phi_{N_c} \in  V_{N_c}^\cN \cap {\cal M}^{\Phi^0} \right\}. 
\end{equation}

\medskip

The main result of this section is the following.

\medskip

\begin{theorem} \label{Th:KS-LDA} Assume that
  (\ref{eq:decay_V_local})-(\ref{eq:rhoc}) hold.  Let $\Phi^0$ be a
local minimizer of (\ref{eq:min_KS}) satisfying (\ref{eq:coerc}). Then there exists $r^0 > 0$ and $N_c^0$ such that for $N_c \ge N_c^0$, (\ref{eq:min_KS_Nc_U}) has a unique local minimizer $\Phi_{N_c}^0$ in the set 
$$
\left\{\Phi_{N_c} \in V_{N_c}^\cN \cap {\cal M}^{\Phi^0} \; | \;  \|\Phi_{N_c}-\Phi^0\|_{H^1_\#} \le r^0 \right\}.
$$ 
If we assume either that $e_{\rm xc}^{\rm LDA} \in C^{[m]}([0,+\infty))$ or that $\rho_{\rm c}+\rho^0 > 0$ on $\Gamma$, then we have the following estimates:
\begin{eqnarray}
\| \Phi_{N_c}^0-\Phi^0\|_{H^s_\#} & \le & C_{s,\epsilon}
N_c^{-(m-s+1/2-\epsilon)}, \label{KSHS} \\
|\epsilon_{i,N_c}^0-\epsilon_i^0| & \le & C_\epsilon N_c^{-(2m-1-\epsilon)}, \label{KSHS'} \\
\gamma  \| \Phi_{N_c}^0-\Phi^0\|_{H^1_\#}^2 \le I^{\rm KS}_{N_c}-I^{\rm KS} & \le & C \| \Phi_{N_c}^0-\Phi^0\|_{H^1_\#}^2,\label{KSHS''} 
\end{eqnarray} 
for all  $-m+3/2 < s < m+1/2$ and $\epsilon > 0$, and for some constants
$\gamma > 0$, $C_{s,\epsilon} \ge 0$, $C_\epsilon \ge 0$ and $C \ge 0$, 
where the $\epsilon_{i,N_c}^0$'s are the eigenvalues of the symmetric 
matrix $\Lambda_{N_c}^0$, the Lagrange multiplier of the matrix constraint
$\int_\Gamma \phi_{i,N_c}\phi_{j,N_c}=\delta_{ij}$.
\end{theorem}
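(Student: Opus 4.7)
The plan is to follow the same template as the TFW proof, with the manifold $\mathcal{M}$ and the coercivity hypothesis (\ref{eq:coerc}) replacing strict convexity, and with the unitary invariance handled by restricting to $\mathcal{M}^{\Phi^0}$. First I would establish a local ``approximation-plus-stability'' result: the trial function $\widetilde \Phi_{N_c}$ obtained by $L^2$-orthonormalizing the Fourier truncation $\Pi_{N_c}\Phi^0$ lies in $V_{N_c}^\cN\cap\mathcal{M}$, converges to $\Phi^0$ in $H^{m+1/2-\epsilon}_\#$ thanks to (\ref{eq:app-Fourier}) and the assumed regularity of $\Phi^0$, and (after a unitary rotation) belongs to $\mathcal{M}^{\Phi^0}$. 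Combined with the continuity of $E^{\rm KS}$ on $(H^1_\#)^\cN$, this gives $I^{\rm KS}_{N_c}\to I^{\rm KS}$ and, together with the coercivity (\ref{eq:coerc}) transferred to a neighborhood of $\Phi^0$ in $\mathcal{M}^{\Phi^0}$, yields existence of a minimizer $\Phi^0_{N_c}$ in the ball of radius $r^0$ as well as its uniqueness (the argument is the matrix-valued analogue of (\ref{eq:pre_uniqueness})). The extra regularity hypothesis on $e_{\rm xc}^{\rm LDA}$ or on $\rho_{\rm c}+\rho^0$ is exactly what is needed to iterate the bootstrap on the Euler-Lagrange equation ${\cal H}^{{\rm KS}}_{\rho_{\Phi_{N_c}^0}}\phi_{i,N_c}^0 = \sum_j (\Lambda_{N_c}^0)_{ij}\phi_{j,N_c}^0$ and conclude that $\Phi_{N_c}^0$ is uniformly bounded in $(H^{m+1/2-\epsilon}_\#)^\cN$.

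Next I would prove the lower and upper bounds (\ref{KSHS''}) on the energy. The upper bound follows from a Taylor expansion of $E^{\rm KS}$ at $\Phi^0$ together with the boundedness of the second derivative on bounded sets of $(H^1_\#)^\cN$. For the lower bound, I would expand $E^{\rm KS}(\Phi_{N_c}^0) - E^{\rm KS}(\Phi^0)$ to second order, write $\Phi_{N_c}^0-\Phi^0$ as the sum of a component in $\Phi^{0,\perp\!\!\!\perp}$ and a component in $T_{\Phi^0}^\perp\mathcal{M}$ (controlled at order $\|\Phi_{N_c}^0-\Phi^0\|_{L^2_\#}^2$ by the definition of $\mathcal{M}^{\Phi^0}$), and invoke (\ref{eq:coerc}) on the tangential part; higher-order remainders are absorbed for $N_c$ large. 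This yields $\gamma\|\Phi_{N_c}^0-\Phi^0\|_{H^1_\#}^2 \le I^{\rm KS}_{N_c}-I^{\rm KS}$.

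For the $H^1$ rate in (\ref{KSHS}), I would again mirror (\ref{eq:4:1:H1})--(\ref{eq:estim_H1}): subtract the exact and discrete Euler equations tested against $\Phi_{N_c}^0 - \widetilde v_{N_c}$ with $\widetilde v_{N_c}\in V_{N_c}^\cN\cap\mathcal{M}^{\Phi^0}$, use the coercivity (\ref{eq:coerc}) to control the quadratic form $a_{\Phi^0}$, bound the nonlinear remainder by $\|\Phi_{N_c}^0-\Phi^0\|_{H^1_\#}^2$ thanks to (\ref{eq:estim_D3}) and the local Lipschitz behavior of $\rho\mapsto e_{\rm xc}^{\rm LDA}{}'(\rho_{\rm c}+\rho)$ (using (\ref{eq:exc-regularity}) and the uniform $L^\infty$-bound), and take $\widetilde v_{N_c}$ to be the Fourier projection of $\Phi^0$ realigned on $\mathcal{M}^{\Phi^0}$. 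This gives $\|\Phi_{N_c}^0-\Phi^0\|_{H^1_\#}\le C\min_{v\in V_{N_c}^\cN}\|v-\Phi^0\|_{H^1_\#} \le C_\epsilon N_c^{-(m-1/2-\epsilon)}$. The $L^2$ rate is then obtained by an Aubin--Nitsche duality argument, as in (\ref{eq:adjoint})--(\ref{eq:Hm1boundFourier}): define an adjoint problem on $\Phi^{0,\perp\!\!\!\perp}$ associated with the bilinear form $a_{\Phi^0}$, use the elliptic regularity of this adjoint problem (which follows from (\ref{eq:decay_V_local})--(\ref{eq:rhoc}) and the postulated extra regularity) to gain two derivatives, and conclude $\|\Phi_{N_c}^0-\Phi^0\|_{L^2_\#}\le C_\epsilon N_c^{-(m+1/2-\epsilon)}$. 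Interpolation and the inverse inequality (\ref{eq:inv_ineq}) then give (\ref{KSHS}) for every $-m+3/2<s<m+1/2$.

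Finally, for (\ref{KSHS'}) I would take traces over $i$ of the Euler equation, write $\epsilon_{i,N_c}^0 - \epsilon_i^0$ as the inner product of $({\cal H}^{{\rm KS}}_{\rho^0}-\epsilon_i^0)\phi_i^0$ with $(\phi_{i,N_c}^0-\phi_i^0)$ plus lower-order terms involving $\rho_{\Phi_{N_c}^0}-\rho^0$, and combine the $H^1$ estimate with the $H^{-r}$ estimate (obtained by the same duality as for $L^2$, tested against arbitrary $w\in H^r_\#$) to reach the doubled order $N_c^{-(2m-1-\epsilon)}$. I expect the genuine difficulty to lie in two places: first, correctly handling the unitary invariance so that the adjoint problem is well posed on $\Phi^{0,\perp\!\!\!\perp}$ and the trial functions stay in $\mathcal{M}^{\Phi^0}$ (this is where the decomposition $T_\Phi\mathcal{M} = \mathcal{A}\Phi\oplus\Phi^{\perp\!\!\!\perp}$ is essential); second, controlling the exchange-correlation term near $\rho_{\rm c}+\rho^0=0$, which is the reason for the dichotomy in the regularity assumption and requires using (\ref{eq:hyp_epsilon_xc_2}) carefully throughout the bootstrap on the Euler equation.
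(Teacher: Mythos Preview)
Your overall strategy is sound and matches the paper in the duality step (the adjoint problem on $\Phi^{0,\perp\!\!\!\perp}$ for the $L^2$ and $H^{-r}$ estimates) and in the eigenvalue analysis. The main structural difference is in how existence, uniqueness, and the $H^1$ rate are obtained. You propose three separate steps (compactness $\Rightarrow$ existence; an energy inequality analogous to (\ref{eq:pre_uniqueness}) $\Rightarrow$ uniqueness; a C\'ea-type argument subtracting Euler equations $\Rightarrow$ $H^1$ rate). The paper instead builds an explicit chart of the discrete manifold: it parametrizes $V_{N_c}^\cN\cap{\cal M}^{\pi_{N_c}^{\cal M}\Phi^0}$ by $W^{N_c}\mapsto \pi_{N_c}^{\cal M}\Phi^0+\cS(W^{N_c})\pi_{N_c}^{\cal M}\Phi^0+W^{N_c}$ with $\cS(W)=(1_\cN-M_{W,W})^{1/2}-1_\cN$ (Lemmas~\ref{Lem:unitary_invariance} and~\ref{lem:properties_of_S}), and then proves in a single lemma (Lemma~\ref{lem:eu_cE}) that the reduced functional ${\cal E}_{N_c}$ has a unique critical point $W_0^{N_c}$ in a ball, with the bound $\|W_0^{N_c}\|_{H^1_\#}\le C\|\pi_{N_c}^{\cal M}\Phi^0-\Phi^0\|_{H^1_\#}$ falling out of the same argument. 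Thus (\ref{eq:presetimH1}) is obtained without any C\'ea-type step. The uniqueness half of that lemma is a monotonicity argument: writing $({\cal E}_{N_c}'(W_1)-{\cal E}_{N_c}'(W_0))\cdot(W_1-W_0)=0$ and expanding, rather than the energy-comparison analogue of (\ref{eq:pre_uniqueness}) you suggest. Your route is viable, but note that the direct analogue of (\ref{eq:pre_uniqueness}) requires coercivity of $a_{\Phi_{N_c}^0}$ on $(\Phi_{N_c}^0)^{\perp\!\!\!\perp}$, which you only get by perturbation once you already know $\Phi_{N_c}^0\to\Phi^0$ in $H^1$; the paper's chart avoids this mild circularity.

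Two smaller remarks. First, be careful with your phrase ``local Lipschitz behavior of $\rho\mapsto \frac{de_{\rm xc}^{\rm LDA}}{d\rho}(\rho_{\rm c}+\rho)$'': under the bare assumptions (\ref{eq:hyp_epsilon_xc})--(\ref{eq:hyp_epsilon_xc_2}) this map is only H\"older-$\alpha$ near zero density, and the paper's remainder bounds (Lemmas~\ref{lem:estimR} and~\ref{lem:estim_2}) are of order $\|\cdot\|^{1+\alpha}$ or $\|\cdot\|^{2+\alpha}$, not quadratic or cubic. This is enough for existence/uniqueness but weaker than you state; Lipschitz control becomes legitimate only after invoking the extra hypothesis $e_{\rm xc}^{\rm LDA}\in C^{[m]}([0,\infty))$ or $\rho_{\rm c}+\rho^0>0$, which the paper reserves for the rate estimates. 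Second, your bootstrap claim that $\Phi_{N_c}^0$ is uniformly bounded in $(H^{m+1/2-\epsilon}_\#)^\cN$ is not needed (and the paper does not prove it): the $H^s$ estimates for $s>1$ come purely from combining the $H^1$ bound with the inverse inequality (\ref{eq:inv_ineq}) and the regularity $\Phi^0\in(H^{m+1/2-\epsilon}_\#)^\cN$, as in (\ref{eq:1estimKS}).
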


\subsection{Some technical lemmas}

For $\Phi=
(\phi_1,\cdots,\phi_\cN)^T \in (H^1_\#(\Gamma))^\cN$ and $\Psi=
(\psi_1,\cdots,\psi_\cN)^T \in (H^1_\#(\Gamma))^\cN$, we denote by
$M_{\Phi,\Psi}$ the $\cN \times \cN$ matrix with entries 
$$
[M_{\Psi,\Phi}]_{ij} = \int_\Gamma \psi_i\phi_j. 
$$
The following lemma is useful for the analysis of
(\ref{eq:min_KS_Nc_U}). We recall that if $A$ and $B$ are symmetric $N \times N$ real matrices, the notation $A \le B$ means that $x^TAx \le x^TBx$ for all $x \in \R^N$.

\medskip

\begin{lemma} \label{Lem:unitary_invariance} $\;$
  \begin{enumerate}
  \item Let $\Phi \in {\cal M}$ and $\Psi \in {\cal M}$. If
    $M_{\Psi,\Phi}$ is invertible, then
    $U_{\Psi,\Phi}=M_{\Psi,\Phi}^T(M_{\Psi,\Phi}M_{\Psi,\Phi}^T)^{-1/2}$
    is the unique minimizer to the problem  
    $\min_{U \in \cU(\cN)} \|U \Psi - \Phi\|_{L^2_\#}$.
    .
  \item Let $\Phi \in {\cal M}$. Then
    $$
    {\cal M}^\Phi = \left\{ (1_\cN-M_{W,W})^{1/2} \Phi + W \; | \; W \in
      \Phi^{\perp\!\!\!\perp}, \; 0 \le M_{W,W} \le 1_\cN \right\}
    $$ 
    where $1_\cN$ denotes the identity matrix of rank $\cN$. 
  \item Let $\Phi = (\phi_1,\cdots,\phi_\cN)^T \in {\cal M}$. If $N_c
    \in \N$ is such that
    $$
    {\rm dim}({\rm span}(\Pi_{N_c}\phi_1,\cdots,\Pi_{N_c}\phi_\cN)) = \cN,
    $$ 
    then the unique minimizer of the problem $\min_{\Phi_{N_c} \in
      V_{N_c}^\cN \cap {\cal M}} \|\Phi_{N_c} - \Phi\|_{L^2_\#}$ is
    \begin{equation}\label{eq:2K1100}
      \pi_{N_c}^{\cal M}\Phi = \left( M_{\Pi_{N_c}\Phi,\Pi_{N_c}\Phi}
      \right)^{-1/2} \Pi_{N_c}\Phi.
    \end{equation}
In addition, $\pi_{N_c}^{\cal M}\Phi \in V_{N_c}^\cN \cap {\cal M}^{\Phi}$,
    \begin{equation}\label{eq:2K11} 
\| \pi_{N_c}^{\cal M}\Phi - \Phi\|_{L^2_\#} \le \sqrt 2 \| \Pi_{N_c}\Phi
- \Phi\|_{L^2_\#},
\end{equation}
and for all $N_c$ large enough, 
\begin{equation}
\| \pi_{N_c}^{\cal M}\Phi - \Phi\|_{H^1_\#} \le \|\Phi\|_{H^1_\#}
\| \Pi_{N_c}\Phi - \Phi\|_{L^2_\#}^2+
 \| \Pi_{N_c}\Phi - \Phi\|_{H^1_\#} .\label{eq:2K1101} 
\end{equation}

  \item Let $N_c$ such that $\mbox{dim}(V_{N_c}) \ge \cN$ and $\Phi_{N_c} \in V_{N_c}^\cN \cap {\cal M}$. Then 
    $$
    V_{N_c}^\cN \cap {\cal M}^{\Phi_{N_c}} = \left\{ (1_\cN-M_{W_{N_c},W_{N_c}})^{1/2}
      \Phi_{N_c} + W_{N_c} \; | \; W_{N_c} \in 
      V_{N_c}^\cN \cap \Phi_{N_c}^{\perp\!\!\!\perp}, \; 0 \le M_{W_{N_c},W_{N_c}}
      \le 1_\cN \right\}. 
    $$ 
  \end{enumerate}
\end{lemma}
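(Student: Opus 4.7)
The plan is to handle the four items in turn, reducing each to a linear algebra statement about $\cN\times\cN$ matrices via the Gram matrices $M_{\cdot,\cdot}$. For (1), I would expand $\|U\Psi-\Phi\|_{L^2_\#}^2 = 2\cN - 2\tr(UM_{\Psi,\Phi})$, so that minimizing over $U\in\cU(\cN)$ is equivalent to maximizing $\tr(UM_{\Psi,\Phi})$. Writing the SVD $M_{\Psi,\Phi} = V\Sigma W^T$ and invoking von Neumann's trace inequality, the maximum equals $\tr\Sigma = \tr((M_{\Psi,\Phi}M_{\Psi,\Phi}^T)^{1/2})$ and is attained by $U = WV^T$; a direct calculation identifies this with $M_{\Psi,\Phi}^T(M_{\Psi,\Phi}M_{\Psi,\Phi}^T)^{-1/2}$, which is well-defined in the invertible case, while strict positivity of all singular values forces uniqueness.

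For (2), the general inequality $\tr(A)\le\tr((AA^T)^{1/2})$ with equality iff $A$ is symmetric positive semi-definite shows that $\Psi\in{\cal M}^\Phi$ is equivalent to $M_{\Psi,\Phi}$ being symmetric positive, regardless of invertibility. Decomposing $\Psi = A\Phi + W$ with $W\in\Phi^{\perp\!\!\!\perp}$, the orthonormality of $\Phi$ yields $M_{\Psi,\Phi}=A$, while that of $\Psi$ gives $AA^T + M_{W,W} = 1_\cN$. With $A$ symmetric positive, $A^2 = 1_\cN - M_{W,W}$ uniquely determines $A = (1_\cN - M_{W,W})^{1/2}$ together with the constraint $0\le M_{W,W}\le 1_\cN$; the reverse inclusion is straightforward. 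Item (4) is a verbatim transposition of this argument inside $V_{N_c}^\cN$: if both $\Psi$ and $\Phi_{N_c}$ lie in $V_{N_c}^\cN$, the summand $W$ in the decomposition automatically lies in $V_{N_c}^\cN\cap\Phi_{N_c}^{\perp\!\!\!\perp}$.

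For (3), the key is the Pythagoras-type identity $\|\Psi_{N_c}-\Phi\|_{L^2_\#}^2 = \|\Pi_{N_c}\Phi - \Phi\|_{L^2_\#}^2 + \|\Psi_{N_c}-\Pi_{N_c}\Phi\|_{L^2_\#}^2$ valid for $\Psi_{N_c}\in V_{N_c}^\cN$, which reduces the minimization to that of $\|\Psi_{N_c}-\Pi_{N_c}\Phi\|_{L^2_\#}$ over $V_{N_c}^\cN\cap{\cal M}$. Setting $M=M_{\Pi_{N_c}\Phi,\Pi_{N_c}\Phi}$ (invertible by hypothesis) and $\tilde\Phi = M^{-1/2}\Pi_{N_c}\Phi \in V_{N_c}^\cN\cap{\cal M}$, I parametrize $\Psi_{N_c}=A\tilde\Phi+W_{N_c}$ with $W_{N_c}\in V_{N_c}^\cN\cap\tilde\Phi^{\perp\!\!\!\perp}$; a direct computation gives $M_{\Psi_{N_c},\Pi_{N_c}\Phi}=AM^{1/2}$ and the constraint $AA^T\le 1_\cN$. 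Diagonalizing $M^{1/2}$ and repeating the trace-inequality argument forces $A=1_\cN$ and $W_{N_c}=0$, yielding the unique minimizer $\pi_{N_c}^{\cal M}\Phi = M^{-1/2}\Pi_{N_c}\Phi$; it belongs to ${\cal M}^\Phi$ because $M_{\pi_{N_c}^{\cal M}\Phi,\Phi}=M^{1/2}$ is symmetric positive. For the $L^2_\#$ estimate, the operator-monotone inequality $(1_\cN-M^{1/2})^2\le 1_\cN-M$ (spectral theorem applied to $(1-\sqrt t)^2\le 1-t$ on $[0,1]$) combined with $\tr(1_\cN-M)=\|\Pi_{N_c}\Phi-\Phi\|_{L^2_\#}^2$ yields the stated bound.

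The main obstacle is the $H^1_\#$ estimate. Writing $\pi_{N_c}^{\cal M}\Phi -\Pi_{N_c}\Phi = (M^{-1/2}-1_\cN)\Pi_{N_c}\Phi$ and using the bound $\|B\Psi\|_{H^1_\#}\le\|B\|_{\rm op}\|\Psi\|_{H^1_\#}$ (obtained from $\tr(BG_\Psi B^T)\le\|B\|_{\rm op}^2\tr(G_\Psi)$ with $G_\Psi$ the $H^1_\#$ Gram matrix of $\Psi$), the proof reduces to controlling $\|M^{-1/2}-1_\cN\|_{\rm op}$. The factorization $M^{-1/2}-1_\cN = M^{-1/2}(1_\cN+M^{1/2})^{-1}(1_\cN-M)$ combined with $M\to 1_\cN$ as $N_c\to\infty$ (so $M^{-1/2}$ stays uniformly bounded) yields $\|M^{-1/2}-1_\cN\|_{\rm op}\le C\|1_\cN-M\|_{\rm op}\le C\tr(1_\cN-M)=C\|\Pi_{N_c}\Phi-\Phi\|_{L^2_\#}^2$ for $N_c$ large; adding the triangle-inequality contribution of $\|\Pi_{N_c}\Phi-\Phi\|_{H^1_\#}$ gives the conclusion. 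The delicate point is the clean separation of matrix operator norms from Sobolev function norms, which is what restricts the estimate to the large-$N_c$ regime.
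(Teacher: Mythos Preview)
Your proof is correct and follows the same overall strategy as the paper: reduce everything to the orthogonal Procrustes problem $\max_{U\in\cU(\cN)}\tr(UM)$, identify the optimizer via the polar decomposition, and then read off the parametrizations and estimates. The technical choices differ slightly. For (1) you invoke the SVD and von Neumann's trace inequality, while the paper writes down the Lagrange condition $\Lambda U^T=M$ with $\Lambda$ symmetric and solves $\Lambda^2=MM^T$; both yield $U=M^T(MM^T)^{-1/2}$. For (2) your direct use of the equality case in $\tr(A)\le\tr((AA^T)^{1/2})$ is a bit cleaner than the paper's, which appeals to the formula from (1) and so implicitly assumes $M_{\Psi,\Phi}$ invertible. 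For (3) you first apply Pythagoras to reduce to minimizing $\|\Psi_{N_c}-\Pi_{N_c}\Phi\|_{L^2_\#}$ and then parametrize intrinsically through $\tilde\Phi=M^{-1/2}\Pi_{N_c}\Phi$, whereas the paper fixes an orthonormal basis of $V_{N_c}$ and works with coefficient matrices $C,\tilde C$; the two computations are equivalent once one notes $\tilde C^T\tilde C=M$. Finally, for the $H^1_\#$ bound the paper uses the scalar inequality $\mu^{-1/2}-1\le 1-\mu$ (valid for $\mu$ close to $1$) together with the Frobenius norm, while your factorization $M^{-1/2}-1_\cN=M^{-1/2}(1_\cN+M^{1/2})^{-1}(1_\cN-M)$ with the operator norm gives a constant tending to $\tfrac12$, hence $\le 1$ for $N_c$ large, which is exactly what the stated inequality requires.
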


\medskip

\begin{proof} In order to
  simplify the notation, we set $M=M_{\Psi,\Phi}$. For each $U \in
  \cU(\cN)$, 
$$
\|U\Psi-\Phi\|_{L^2_\#}^2 = 2\cN - 2 \tr(MU).
$$
Any critical point $U$ of the problem 
\begin{equation}
\max_{U \in \R^{\cN\times\cN} \, | \, U^TU=1_\cN} \tr(MU) \label{eq:minUUU}
\end{equation}
satisfies an Euler equation of the form $\Lambda U^T=M$ for some symmetric
matrix~$\Lambda$. Besides, $\tr(MU) = \tr(\Lambda)$ and $\Lambda^2 =
MM^T$. Any maximizer $U$ of (\ref{eq:minUUU}) therefore
satisfies $M=(MM^T)^{1/2}U^T$. Consequently, if $M$ is invertible, the maximizer of (\ref{eq:minUUU}) is unique and reads $U_{\Psi,\Phi} = M^T(MM^T)^{-1/2}$. It also follows from the
definition of the matrix $M$ that $\Psi = M \Phi + W$ with $W \in
\Phi^{\perp\!\!\!\perp}$. Thus,
$$
U_{\Psi,\Phi} \Psi = M^T(MM^T)^{-1/2}M \Phi + \widetilde W,
$$
with $\widetilde W = U_{\Psi,\Phi} W \in \Phi^{\perp\!\!\!\perp}$.

\medskip

Let us now prove the second statement. Each $\Psi \in
(H^1_\#(\Gamma))^\cN$ can be written as $\Psi = M \Phi + W$ for some
matrix $M \in \R^{\cN \times \cN}$ and some $W  \in
\Phi^{\perp\!\!\!\perp}$. A simple calculation leads to
$$
\int_\Gamma \psi_i\psi_j = [MM^T]_{ij} + [M_{W,W}]_{ij}.
$$
Hence $\Psi = M\Phi + W \in {\cal M}$ if and only if $MM^T+M_{W,W} =
1_\cN$. In addition, $\Psi \in {\cal M}^\Phi$ if and only if $\Psi \in {\cal
  M}$ and $U_{\Psi,\Phi} = M^T(MM^T)^{-1/2} = 1_\cN$, that is to say if and
only if $M$ is symmetric, $0 \le M_{W,W} \le 1_\cN$ and $M=(1_\cN-M_{W,W})^{1/2}$. 

\medskip

Let $(\chi_\mu)_{1 \le \mu \le {\rm dim}(V_{N_c})}$ be an orthonormal
basis of $V_{N_c}$ (for the $L^2_\#$ inner product) and let $\widetilde
C \in \R^{{\rm dim}(V_{N_c}) \times \cN}$ be the matrix with entries
$$
\widetilde C_{\mu,i} = \int_\Gamma \phi_i\chi_\mu.
$$
Note that
\begin{equation} \label{eq:dec_Phi}
\Pi_{N_c} \phi_i = 
\sum_{\mu=1}^{{\rm dim}(V_{N_c})} \widetilde C_{\mu,i} \chi_\mu,
\end{equation}
For all $\Phi_{N_c}
=(\phi_{N_c,1}, \cdots , \phi_{N_c,\cN})^T \in V_{N_c}^\cN \cap {\cal M}$,
each $\phi_{N_c,i}$ can be expanded as 
\begin{equation} \label{eq:dec_Phi_2}
\phi_{N_c,i} = \sum_{\mu=1}^{{\rm dim}(V_{N_c})} C_{\mu i} \chi_\mu,
\end{equation}
where the matrix $C=[C_{\mu i}]  \in \R^{{\rm dim}(V_{N_c})} 
  \times \cN$ satisfies the constraint $C^TC=1_\cN$. The expansions
  (\ref{eq:dec_Phi}) and (\ref{eq:dec_Phi_2}) can be recast into the more
  compact forms
$$ 
\Pi_{N_c}\Phi = \widetilde C^T {\cal X} \qquad \mbox{and} \qquad 
\Phi_{N_c} = C^T{\cal X},
$$
where we have denoted
by ${\cal X} = (\chi_1,\cdots,\chi_{\mbox{dim}(V_{N_c})})^T$.
A simple calculation then leads to
\begin{equation} \label{eq:2K10}
\|\Phi_{N_c} - \Phi\|_{L^2_\#}^2 = 2N - 2 \tr(\widetilde C^T C).
\end{equation}
Reasoning as above, we obtain that the unique solution to the problem
$$
\max_{C \in \R^{{\rm dim}(V_{N_c}) \times \cN} \, | \, C^TC=1_\cN}
\tr(\widetilde C^TC) 
$$
is $C = \widetilde C(\widetilde C^T\widetilde C)^{-1/2}$. Note that the rank of the matrix $\widetilde C$ is $\cN$
provided that ${\rm dim}(V_{N_c})$ is large enough so that the matrix
$\widetilde C^T\widetilde C$ is invertible provided that  ${\rm
  dim}(V_{N_c})$ is large 
enough. As a consequence, the unique solution to the problem
$\min_{\Phi_{N_c} \in V_{N_c}^\cN \cap {\cal M}} \|\Phi_{N_c} -
\Phi\|_{L^2_\#}$ is $\pi_{N_c}^{\cal M}\Phi = (\widetilde C^T\widetilde
C)^{-1/2} \widetilde C^T{\cal X} = (\widetilde C^T\widetilde
C)^{-1/2} \Pi_{N_c}\Phi$. It is then easy to check that 
$\widetilde C^T\widetilde C = M_{\Pi_{N_c}\Phi,\Pi_{N_c}\Phi}$. Hence
(\ref{eq:2K1100}). Then, for all $U \in \R^{\cN\times\cN}$ such that
$U^TU=1_\cN$, 
$$
\|U \pi_{N_c}^{\cal M}\Phi - \Phi\|_{L^2_\#}^2 
= 2 (1 - \tr(UM_{\Pi_{N_c}\Phi,\Pi_{N_c}\Phi}^{1/2})),
$$
and the same argument as above leads to the result that this
quantity is minimized for
$U=M_{\Pi_{N_c}\Phi,\Pi_{N_c}\Phi}^{1/2}(M_{\Pi_{N_c}\Phi,\Pi_{N_c}\Phi}^{1/2}M_{\Pi_{N_c}\Phi,\Pi_{N_c}\Phi}^{1/2})^{-1/2}
= 1_\cN$. Therefore, $\pi_{N_c}^{\cal M}\Phi \in {\cal M}^\Phi$.

We also infer from (\ref{eq:2K10}) that
$$
\|\pi_{N_c}^{\cal M}\Phi - \Phi\|_{L^2_\#}^2 = 2N - 2
\tr\left((\widetilde C^T\widetilde C)^{1/2}\right) 
= 2 \tr\left(1_\cN-(\widetilde C^T\widetilde C)^{1/2}\right).
$$
Besides, an easy calculation leads to
$$
\|\Pi_{N_c}\Phi - \Phi\|_{L^2_\#}^2 = \tr\left(1_\cN-\widetilde
  C^T\widetilde C\right). 
$$
Using the fact that 
$$
0 \le \left(1_\cN-(\widetilde C^T\widetilde C)^{1/2}\right) \le
\left(1_\cN-(\widetilde C^T\widetilde C)^{1/2}\right)\left(1_\cN+(\widetilde C^T\widetilde C)^{1/2}\right) = 1_\cN -
\widetilde C^T\widetilde C, 
$$
we obtain
$$
\|\pi_{N_c}^{\cal M}\Phi - \Phi\|_{L^2_\#}^2 = 2
\tr\left(1_\cN-(\widetilde C^T\widetilde C)^{1/2}\right) 
\le 2  \tr\left(1_\cN-\widetilde C^T\widetilde C\right) = 2
\|\Pi_{N_c}\Phi - \Phi\|_{L^2_\#}^2. 
$$
Hence (\ref{eq:2K11}). We also have
\begin{eqnarray*}
\|\pi_{N_c}^{\cal M}\Phi - \Phi\|_{H^1_\#} &\le& 
\|\pi_{N_c}^{\cal M}\Phi - \Pi_{N_c}\Phi\|_{H^1_\#} + 
\|\Pi_{N_c}\Phi - \Phi\|_{H^1_\#} \\
&=& \|((M_{\Pi_{N_c}\Phi,\Pi_{N_c}\Phi})^{-1/2} - 1_\cN) \Pi_{N_c}\Phi\|_{H^1_\#} + 
\|\Pi_{N_c}\Phi - \Phi\|_{H^1_\#} \\
& \le & \|(M_{\Pi_{N_c}\Phi,\Pi_{N_c}\Phi})^{-1/2} - 1_\cN\|_{\rm F}
\|\Pi_{N_c}\Phi\|_{H^1_\#} + 
\|\Pi_{N_c}\Phi - \Phi\|_{H^1_\#} \\
& \le & \|(M_{\Pi_{N_c}\Phi,\Pi_{N_c}\Phi})^{-1/2} - 1_\cN\|_{\rm F}
\|\Phi\|_{H^1_\#} + 
\|\Pi_{N_c}\Phi - \Phi\|_{H^1_\#},
\end{eqnarray*}
where $\|\cdot\|_{\rm F}$ denotes the Frobenius norm.
We then notice that 
$$
M_{\Pi_{N_c}\Phi,\Pi_{N_c}\Phi} = 1_\cN -
M_{\Pi_{N_c}\Phi-\Phi,\Pi_{N_c}\Phi-\Phi}.
$$
Consequently, for $N_c$ large enough,
$$
\|(M_{\Pi_{N_c}\Phi,\Pi_{N_c}\Phi})^{-1/2}- 1_\cN\|_{\rm F} \le
\|M_{\Pi_{N_c}\Phi-\Phi,\Pi_{N_c}\Phi-\Phi}\|_{\rm F} \le  
\|\Pi_{N_c}\Phi-\Phi\|_{L^2_\#}^2.
$$
Therefore (\ref{eq:2K1101}) is proved.

\medskip

Lastly, the fourth assertion easily follows from the second one.
\end{proof}

\medskip

\begin{lemma} \label{lem:properties_of_S} Let 
$$
K = \left\{ W \in (L^2_\#(\Gamma)^\cN \; | \; 0 \le M_{W,W} \le 1_\cN \right\},
$$ 
and $\cS \, : \, K \rightarrow \R^{\cN \times \cN}_{\rm S}$ (the space of the symmetric $\cN \times \cN$ real matrices) defined by
$$
\cS(W) = (1_\cN-M_{W,W})^{1/2}-1_\cN.
$$
The function $\cS$ is continuous on $K$ and differentiable on
the interior $\dps\mathop{{K}}^\circ$ of $K$. In addition,  
\begin{equation} \label{eq:bound_SWW}
\forall W \in  K, \quad \|\cS(W) \|_F \le \|W\|_{L^2_\#}^2,
\end{equation}
and for all $(W_1,W_2,Z) \in K \times K \times (L^2_\#(\Gamma))^\cN$ such that $\|W_1\|_{L^2_\#}\le \frac 12$ and $\|W_2\|_{L^2_\#}\le \frac 12$, 
\begin{eqnarray}\label{eq:lip_S}
\|\cS(W_1)-\cS(W_2)\|_{\rm F} &\le & 2 (\|W_1\|_{L^2_\#}+\|W_2\|_{L^2_\#}) 
\|W_1-W_2\|_{L^2_\#}, \\
\|(\cS'(W_1)-\cS'(W_2))\cdot Z\|_{\rm F} &\le&  3
\|W_1-W_2\|_{L^2_\#} \|Z\|_{L^2_\#},  \label{eq:deriv_S} \\
\|(\cS''(W_1)(Z,Z)\|_{\rm F} &\le&  3 \|Z\|_{L^2_\#}^2 . \label{eq:deriv2_S}
\end{eqnarray}
\end{lemma}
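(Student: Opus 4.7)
The plan is to factor $\cS$ as $\cS=\varphi\circ\Phi$, where $\Phi\colon (L^2_\#(\Gamma))^{\cN}\to\R^{\cN\times\cN}_{\rm S}$ is given by $\Phi(W)=M_{W,W}$ and $\varphi(A)=(1_\cN-A)^{1/2}-1_\cN$ is defined on symmetric matrices by the continuous functional calculus whenever $\mathrm{spec}(A)\subset[0,1]$ and by the binomial power series whenever $\|A\|_{\rm op}<1$. The map $\Phi$ is a continuous quadratic form with $\Phi'(W)\cdot Z=M_{W,Z}+M_{Z,W}$ and $\Phi''(W)(Z,Z)=2M_{Z,Z}$; Cauchy--Schwarz gives the basic bounds $\|\Phi(W)\|_{\rm F}\le\|W\|_{L^2_\#}^2$, $\|\Phi'(W)\cdot Z\|_{\rm F}\le 2\|W\|_{L^2_\#}\|Z\|_{L^2_\#}$, and $\|\Phi''(W)(Z,Z)\|_{\rm F}\le 2\|Z\|_{L^2_\#}^2$. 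Continuity of $\cS$ on $K$ and smoothness on $\mathop{{K}}^\circ$ then follow, since $W\in K$ (resp.\ $W\in\mathop{{K}}^\circ$) is exactly the condition that $\mathrm{spec}(\Phi(W))\subset[0,1]$ (resp.\ $\subset[0,1)$). For (\ref{eq:bound_SWW}) I would use the algebraic identity $\varphi(A)=-A\bigl(1_\cN+(1_\cN-A)^{1/2}\bigr)^{-1}$, whose second factor has eigenvalues in $[1/2,1]$ and is therefore contractive, giving $\|\varphi(A)\|_{\rm F}\le\|A\|_{\rm F}$ and hence $\|\cS(W)\|_{\rm F}\le\|W\|_{L^2_\#}^2$.

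For the three quantitative estimates I would restrict to the regime $\|W_i\|_{L^2_\#}\le 1/2$, so that $\|M_{W_i,W_i}\|_{\rm op}\le\|M_{W_i,W_i}\|_{\rm F}\le 1/4$ and all matrices involved lie in the analyticity region of $\varphi$. Writing $\varphi(A)=\sum_{k\ge 1}c_kA^k$ with $c_k=(-1)^k\binom{1/2}{k}<0$ and using $\sum_{k\ge 1}|c_k|x^k=1-\sqrt{1-x}$ on $[0,1]$, term-by-term differentiation yields
$$
\sum_{k\ge 1}k|c_k|x^{k-1}=\frac{1}{2\sqrt{1-x}},\qquad \sum_{k\ge 1}k(k-1)|c_k|x^{k-2}=\frac{1}{4(1-x)^{3/2}}.
$$
Combining this with the noncommutative product rule $\frac{d}{d\epsilon}(A+\epsilon T)^k\big|_{\epsilon=0}=\sum_{i=0}^{k-1}A^iTA^{k-1-i}$ and the submultiplicativity $\|XY\|_{\rm F}\le\|X\|_{\rm op}\|Y\|_{\rm F}$ gives, uniformly for $\|A\|_{\rm op}\le 1/4$,
$$
\|\varphi'(A)\cdot T\|_{\rm F}\le\tfrac{1}{\sqrt{3}}\|T\|_{\rm F},\qquad \|\varphi''(A)(T_1,T_2)\|_{\rm F}\le\tfrac{2}{3\sqrt{3}}\|T_1\|_{\rm F}\|T_2\|_{\rm F}.
$$

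The three estimates then follow from the chain rule for $\cS=\varphi\circ\Phi$. Indeed,
$$
\cS''(W)(Z_1,Z_2)=\varphi''(\Phi(W))(\Phi'(W)\!\cdot\!Z_1,\Phi'(W)\!\cdot\!Z_2)+\varphi'(\Phi(W))\!\cdot\!\Phi''(W)(Z_1,Z_2),
$$
whose Frobenius norm is at most $\tfrac{2}{3\sqrt{3}}(2\|W\|_{L^2_\#}\|Z_1\|_{L^2_\#})(2\|W\|_{L^2_\#}\|Z_2\|_{L^2_\#})+\tfrac{1}{\sqrt{3}}\cdot 2\|Z_1\|_{L^2_\#}\|Z_2\|_{L^2_\#}\le\tfrac{8}{3\sqrt{3}}\|Z_1\|_{L^2_\#}\|Z_2\|_{L^2_\#}$, proving (\ref{eq:deriv2_S}) since $\tfrac{8}{3\sqrt 3}<3$. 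The bounds (\ref{eq:lip_S}) and (\ref{eq:deriv_S}) are then immediate consequences of the integral mean-value formulas $\cS(W_1)-\cS(W_2)=\int_0^1\cS'(W_t)\cdot(W_1-W_2)\,dt$ and $\cS'(W_1)\cdot Z-\cS'(W_2)\cdot Z=\int_0^1\cS''(W_t)(W_1-W_2,Z)\,dt$ along the segment $W_t=(1-t)W_1+tW_2$, which stays in the small-norm region $\{\|W\|_{L^2_\#}\le 1/2\}$, combined with the pointwise estimates $\|\cS'(W)\cdot Z\|_{\rm F}\le\tfrac{2}{\sqrt{3}}\|W\|_{L^2_\#}\|Z\|_{L^2_\#}$ and the just-obtained bound on $\cS''$; the resulting constants, respectively $\tfrac{2}{\sqrt 3}$ and $\tfrac{8}{3\sqrt 3}$, are comfortably below $2$ and $3$. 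The only real work is the bookkeeping of constants; no genuine analytic obstacle arises once the factorization $\cS=\varphi\circ\Phi$ is set up and the operator bounds on $\varphi'$ and $\varphi''$ coming from the explicit power-series identities above are in hand.
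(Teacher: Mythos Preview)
Your proof is correct and proceeds by a genuinely different route from the paper's. The paper exploits the algebraic identity $(1_\cN+\cS(W))^2=1_\cN-M_{W,W}$, rewritten as $\cS(W)+\tfrac12\cS(W)^2=-\tfrac12 M_{W,W}$; differentiating once gives a Sylvester-type equation $A+\tfrac12(\cS(W)A+A\cS(W))=-\tfrac12(M_{W,Z}+M_{Z,W})$ for $A=\cS'(W)\cdot Z$, from which one extracts $\|A\|_{\rm F}(1-\|W\|_{L^2_\#}^2)\le\|W\|_{L^2_\#}\|Z\|_{L^2_\#}$ directly. The Lipschitz estimate on $\cS'$ is then obtained by subtracting these equations at $W_1$ and $W_2$, and the bound on $\cS''$ by passing to the limit $W_2\to W_1$ along a fixed direction.

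Your factorization $\cS=\varphi\circ\Phi$ with the binomial power series $\varphi(A)=\sum_{k\ge1}c_kA^k$ is more systematic: once the closed forms $\sum k|c_k|x^{k-1}=\tfrac{1}{2\sqrt{1-x}}$ and $\sum k(k-1)|c_k|x^{k-2}=\tfrac{1}{4(1-x)^{3/2}}$ are in hand, the operator bounds on $\varphi'$ and $\varphi''$ follow mechanically, and the chain rule plus integral mean-value formulas give the three estimates with constants $\tfrac{2}{\sqrt3}$ and $\tfrac{8}{3\sqrt3}$, comfortably below the stated $2$ and $3$. The paper's implicit-equation approach avoids power series entirely and is slightly shorter, but your method has the advantage of producing the bound on $\cS''$ first and deducing the other two as corollaries, rather than going in the reverse order as the paper does.
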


\medskip

\begin{proof}
Diagonalizing $M_{W,W}$ and using the
properties of the function $t \mapsto (1-t)^{1/2}-1$, we see that $\cS$ is
continuous on $K$ and differentiable on $\dps \mathop{K}^\circ$, and that 
$$
\|\cS(W) \|_{\rm F} \le \|M_{W,W}\|_{\rm F} \le \|W\|_{L^2_\#}^2.
$$
Hence (\ref{eq:bound_SWW}). As 
$$
\cS(W) + \frac 12 \cS(W)^2 = - \frac 12 M_{W,W},
$$
we have for all $W \in \dps \mathop{K}^\circ$,
\begin{eqnarray*}
\!\!\!\!\!\!\!\!\!\!
&& \cS'(W) \cdot Z + \frac 12 \left[ 
\cS(W) (\cS'(W)\cdot Z)) + (\cS'(W) \cdot Z))
\cS(W) \right] \\ \!\!\!\!\!\!\!\!\!\!
&& \qquad = 
-\frac 12 \left[ M_{W,Z}+M_{Z,W} \right]. 
\end{eqnarray*}
Denoting by $A=\cS'(W)\cdot Z$, we deduce from the above equality that
$$
\|A\|_{\rm F}^2 + \tr(A^2\cS(W)) \le \|A\|_{\rm F} \|M_{W,Z}\|_{\rm F} \le \|A\|_{\rm F}  \|W\|_{L^2_\#} \|Z\|_{L^2_\#} .
$$
As $|\tr(A^2\cS(W))| \le \|A\|_{\rm F}^2  \|\cS(W)\|_{2} \le   \|A\|_{\rm F}^2 \|\cS(W)\|_{\rm F} \le  \|A\|_{\rm F}^2 \|W\|_{L^2_\#}^2$, we finally obtain the inequality  
\begin{equation}\label{eq:bound_A}
\|A\|_{\rm F} (1 - \|W\|_{L^2_\#}^2) \le \|W\|_{L^2_\#} \|Z\|_{L^2_\#},
\end{equation}
which straightforwardly leads to (\ref{eq:lip_S}) under the conditions $\|W_1\|_{L^2_\#} \le \frac 12$ and  $\|W_2\|_{L^2_\#} \le \frac 12$.  
Lastly, 
\begin{eqnarray*}
\!\!\!\!\!\!\!\!\!\!
&& (\cS'(W_2)-\cS'(W_1)) \cdot Z + \frac 12 \left[ 
\cS(W_2) ((\cS'(W_2)-\cS'(W_1)) \cdot Z) + ((\cS'(W_2)-\cS'(W_1)) \cdot Z)
\cS(W_2) \right] \\ \!\!\!\!\!\!\!\!\!\!
&& \qquad + \frac 12 \left[ (\cS'(W_1)\cdot Z) (\cS(W_2)-\cS(W_1))
+   (\cS(W_2)-\cS(W_1)) (\cS'(W_1)\cdot Z) \right] = 
-\frac 12 \left[ M_{W_2-W_1,Z}+M_{Z,W_2-W_1} \right], 
\end{eqnarray*}
so that still under the conditions $\|W_1\|_{L^2_\#} \le \frac 12$ and  $\|W_2\|_{L^2_\#} \le \frac 12$,
$$
\frac 34 \|(\cS'(W_2)-\cS'(W_1)) \cdot Z\|_{\rm F} \le 
\frac{17}{9} \|W_2-W_1\|_{L^2_\#} \|Z\|_{L^2_\#}.
$$
Hence (\ref{eq:deriv_S}). Lastly, taking $W_2 = W_1+tZ$ in (\ref{eq:deriv_S}) and letting $t$ go to zero, we obtain (\ref{eq:deriv2_S}).
\end{proof}

\medskip

\begin{lemma} \label{lem:coercdisc}
Let $\Phi^0$ be a local minimizer of (\ref{eq:min_KS}) satisfying
(\ref{eq:coerc}). Then $a_{\Phi^0}$ defines a continuous bilinear form
on $(H^1_\#(\Gamma))^\cN \times (H^1_\#(\Gamma))^\cN$,
and there exists $N_c^*$ such that for all $N_c \ge
N_c^*$, 
\begin{equation}\label{eq:PProjec}
\|\pi^{\cal M}_{N_c}\Phi^0-\Phi^0\|_{H^1_\#} \le 1,
\end{equation}
\begin{equation}\label{eq:elip}
a_{\Phi^0}(\pi^{\cal M}_{N_c}\Phi^0-\Phi^0,\pi^{\cal
  M}_{N_c}\Phi^0-\Phi^0) \ge \frac{c_{\Phi^0}}{2} \|\pi^{\cal
  M}_{N_c}\Phi^0-\Phi^0\|^2_{H^1_\#}, 
\end{equation}
\begin{equation} \label{eq:coercdisc3}
\forall W\in [\pi_{N_c}^{\cal M}\Phi^{0}]^{\perp\!\!\!\perp}, 
\quad a_{\Phi^0}(W,W) \ge \frac{c_{\Phi^0}}{2} \|W\|^2_{H^1_\#}.
\end{equation}
\end{lemma}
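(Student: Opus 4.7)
My plan splits into four pieces: continuity of $a_{\Phi^0}$, the bound $\|\pi^{\cal M}_{N_c}\Phi^0-\Phi^0\|_{H^1_\#}\le 1$, the coercivity (\ref{eq:elip}) on the specific direction $\pi^{\cal M}_{N_c}\Phi^0-\Phi^0$, and finally the full coercivity (\ref{eq:coercdisc3}) on $[\pi_{N_c}^{\cal M}\Phi^0]^{\perp\!\!\!\perp}$.

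For continuity, I would treat separately the three terms appearing in (\ref{eq:defaKS2}). The first term $\sum_i\langle(\mathcal{H}^{\rm KS}_{\rho^0}-\epsilon_i^0)\psi_i,\upsilon_i\rangle$ is continuous on $(H^1_\#)^\cN\times(H^1_\#)^\cN$ because $\mathcal{H}^{\rm KS}_{\rho^0}$ maps $H^1_\#$ into $H^{-1}_\#$ continuously (the multiplicative potential is in $L^\infty$ since $\rho^0\in C^0_\#(\Gamma)$ and $V_{\rm local}\in C^0_\#(\Gamma)$ by hypothesis (\ref{eq:decay_V_local}), while $V_{\rm nl}$ is $L^2_\#$-bounded by (\ref{eq:hyp_Vnl})). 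The Hartree block is handled by (\ref{eq:estim_D3}) and the $L^\infty$ bound on $\phi_i^0$. For the exchange-correlation block, using (\ref{eq:hyp_epsilon_xc_2}), the integrand is controlled by $C(1+(\rho_{\rm c}+\rho^0)^{\alpha-1})|\phi_i^0\phi_j^0\psi_i\upsilon_j|$; since $|\phi_i^0\phi_j^0|\le \tfrac12\rho^0\le C(\rho_{\rm c}+\rho^0)^{1-\alpha}$ (the density is $L^\infty$), the singular factor is absorbed and the whole term is bounded by $\|\Psi\|_{L^2_\#}\|\Upsilon\|_{L^2_\#}\le C\|\Psi\|_{H^1_\#}\|\Upsilon\|_{H^1_\#}$.

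For (\ref{eq:PProjec}), I combine (\ref{eq:2K1101}) with the Fourier approximation estimate (\ref{eq:app-Fourier}) applied to each $\phi_i^0\in H^{m+1/2-\epsilon}_\#(\Gamma)$, which shows $\|\pi^{\cal M}_{N_c}\Phi^0-\Phi^0\|_{H^1_\#}\to 0$. For (\ref{eq:elip}), I invoke Lemma~\ref{Lem:unitary_invariance}(2) to write $\pi^{\cal M}_{N_c}\Phi^0-\Phi^0 = \cS(W_{N_c})\Phi^0+W_{N_c}$ with $W_{N_c}\in \Phi^{0,\perp\!\!\!\perp}$. By (\ref{eq:2K11}) and (\ref{eq:bound_SWW}), $\|W_{N_c}\|_{L^2_\#}\to 0$ and $\|\cS(W_{N_c})\Phi^0\|_{H^1_\#}\le\|W_{N_c}\|_{L^2_\#}^2\|\Phi^0\|_{H^1_\#}$, so the ``$\cS$ piece'' is a higher-order correction. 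Expanding $a_{\Phi^0}(\pi^{\cal M}_{N_c}\Phi^0-\Phi^0,\pi^{\cal M}_{N_c}\Phi^0-\Phi^0)$, the diagonal term $a_{\Phi^0}(W_{N_c},W_{N_c})$ is bounded below by $c_{\Phi^0}\|W_{N_c}\|_{H^1_\#}^2$ via (\ref{eq:coerc}), while the cross and quadratic $\cS$-terms are $O(\|W_{N_c}\|_{L^2_\#}^2\|W_{N_c}\|_{H^1_\#})$ by continuity of $a_{\Phi^0}$; for $N_c\ge N_c^*$ they are absorbed into half of the leading term. Combining with $\|\pi^{\cal M}_{N_c}\Phi^0-\Phi^0\|_{H^1_\#}^2\le 2(1+o(1))\|W_{N_c}\|_{H^1_\#}^2$ yields (\ref{eq:elip}).

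For (\ref{eq:coercdisc3}), given $W\in[\pi_{N_c}^{\cal M}\Phi^0]^{\perp\!\!\!\perp}$, I write $W=W^\perp+W^\parallel$ where $W^\parallel_i=\sum_j(\int_\Gamma w_i\phi_j^0)\phi_j^0$ and $W^\perp\in\Phi^{0,\perp\!\!\!\perp}$. The orthogonality to $\pi_{N_c}^{\cal M}\Phi^0$ gives $\int_\Gamma w_i\phi_j^0=\int_\Gamma w_i(\phi_j^0-(\pi_{N_c}^{\cal M}\Phi^0)_j)$, so by Cauchy--Schwarz the Gram-like coefficients are bounded by $\|W\|_{L^2_\#}\|\Phi^0-\pi_{N_c}^{\cal M}\Phi^0\|_{L^2_\#}$; this gives $\|W^\parallel\|_{H^1_\#}\le C\delta_{N_c}\|W\|_{H^1_\#}$ with $\delta_{N_c}:=\|\Phi^0-\pi_{N_c}^{\cal M}\Phi^0\|_{L^2_\#}\to 0$. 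Applying the coercivity (\ref{eq:coerc}) to $W^\perp$ and using continuity of $a_{\Phi^0}$ to bound the cross and $W^\parallel$-terms by $O(\delta_{N_c}\|W\|_{H^1_\#}^2)$, we obtain $a_{\Phi^0}(W,W)\ge(c_{\Phi^0}-O(\delta_{N_c}))\|W\|_{H^1_\#}^2$, which is at least $\tfrac{c_{\Phi^0}}{2}\|W\|_{H^1_\#}^2$ for $N_c$ large.

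The main obstacle I anticipate is the continuity step, specifically the potentially singular exchange-correlation bilinear form: reconciling the blow-up of $\frac{d^2e_{\rm xc}^{\rm LDA}}{d\rho^2}$ near zero with the vanishing of $\phi_i^0\phi_j^0$ on $\{\rho^0=0\}$ requires the $L^\infty$ control on $\rho^0$ via the regularity $\Phi^0\in[H^3_\#(\Gamma)]^\cN$, which in turn depends on the regularity discussion preceding the theorem statement. The rest is a careful but routine perturbation argument around the exact coercivity on $\Phi^{0,\perp\!\!\!\perp}$.
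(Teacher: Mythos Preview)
Your proposal is correct and follows essentially the same approach as the paper. The decomposition $\pi^{\cal M}_{N_c}\Phi^0-\Phi^0 = \cS(W_{N_c})\Phi^0 + W_{N_c}$ with $W_{N_c}\in\Phi^{0,\perp\!\!\!\perp}$ for (\ref{eq:elip}), and the splitting $W = W^\perp + W^\parallel$ (equivalently $W^* = W - M_{W,\Phi^0}\Phi^0$) for (\ref{eq:coercdisc3}), together with the key observation $M_{W,\Phi^0} = M_{W,\Phi^0-\pi_{N_c}^{\cal M}\Phi^0}$, are exactly the paper's arguments. One minor remark: the paper's own proof does not spell out the continuity of $a_{\Phi^0}$ at all (it simply introduces the constant $C_{\Phi^0}$), so your treatment of the exchange-correlation term---using $|\phi_i^0\phi_j^0|\le C(\rho_{\rm c}+\rho^0)^{1-\alpha}$ to absorb the $(\rho_{\rm c}+\rho^0)^{\alpha-1}$ singularity---is more detailed than what the paper provides, and is correct.
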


\medskip

In the sequel, we denote by $C_{\Phi^0}$ the continuity constant of
$a_{\Phi^0}$, i.e.
\begin{equation}  \label{eq:continuity_aPhi0}
\forall (\Psi,\Psi') \in 
((H^1_\#(\Gamma))^\cN)^2, \quad |a_{\Phi^0}(\Psi,\Psi')| \le C_{\Phi^0}
\|\Psi\|_{H^1_\#} \|\Psi'\|_{H^1_\#}. 
\end{equation}

\medskip

\begin{proof}
Estimate (\ref{eq:PProjec}) immediately results from the closeness 
of $\pi^{\cal M}_{N_c}  \Phi^0$ to $\Phi^0$. Using the fact that
$\pi_{N_c}^{\cal M}\Phi^0 \in  {\cal M}^{\Phi^0}$ (see Lemma \ref{Lem:unitary_invariance}, point 3), we get
\begin{equation} \label{eq:decomp000}
\pi_{N_c}^{\cal M}\Phi^0- \Phi^0 = \cS(W) \Phi^0 + W
\end{equation}
with $W\in [\Phi^{0}]^{\perp\!\!\!\perp}$, from which we derive, using (\ref{eq:bound_SWW}),  that
\begin{eqnarray*} \!\!\!\!\!\!\!\!\!\!
a_{\Phi^0}(\pi^{\cal M}_{N_c}\Phi^0-\Phi^0,\pi^{\cal M}_{N_c}\Phi^0-\Phi^0) &=& a_{\Phi^0}(W,W) + 2 a_{\Phi^0}(W,\cS(W) \Phi^0) +a_{\Phi^0}(\cS(W) \Phi^0, \cS(W)\Phi^0) \\
& \ge & c_{\Phi^0} \|W\|_{H^1_\#}^2 - 2 C_{\Phi^0} \|W\|_{H^1_\#} \|\Phi^0\|_{H^1_\#}
\|W\|_{L^2_\#}^2 - C_{\Phi^0} \|W\|_{L^2_\#}^4 \|\Phi^0\|_{H^1_\#}^2 \\
& \ge & \left( c_{\Phi^0} - 2 C_{\Phi^0} \|W\|_{L^2_\#} \|\Phi^0\|_{H^1_\#}
- C_{\Phi^0} \|W\|_{L^2_\#}^2 \|\Phi^0\|_{H^1_\#}^2\right)  \|W\|_{H^1_\#}^2.
\end{eqnarray*}
As by (\ref{eq:2K11}), $\|\pi_{N_c}^{\cal M}\Phi^0- \Phi^0\|_{L^2_\#}$ goes to zero when $N_c$ goes to infinity, so does $\|W\|_{L^2_\#}$. Using again (\ref{eq:bound_SWW}), we deduce from (\ref{eq:decomp000}) that $\dps \|W\|_{H^1_\#} \mathop{\sim}_{N_c \to \infty} \|\pi^{\cal M}_{N_c}\Phi^0-\Phi^0\|_{H^1_\#}$. Hence (\ref{eq:elip}).

\medskip

\noindent
Finally, for each $W\in [\pi_{N_c}^{\cal M}\Phi^{0}]^{\perp\!\!\!\perp}$,
$W^* = W - M_{W,\Phi^0} \Phi^0$ belongs to $[\Phi^{0}]^{\perp\!\!\!\perp}$. Remarking that $M_{W,\Phi^0} = M_{W,\Phi^0-\pi_{N_c}^{\cal M}\Phi^{0}}$, we derive 
$$
\|M_{W,\Phi^0}\|_{\rm F} \le \|M_{W,\Phi^0-\pi_{N_c}^{\cal M}\Phi^{0}}\|_{\rm F} \le \varepsilon (N_c) \|W\|_{L^2_\#}
$$ 
where $ \varepsilon (N_c) =  \| \Phi^0-\pi_{N_c}^{\cal M}\Phi^{0}\|_{L^2_\#} \rightarrow 0$ when $N_c$ goes to infinity. Therefore,
$$
\|W-W^*\|_{H^1_\#} \le \varepsilon (N_c)  \|\Phi^0\|_{H^1_\#} \|W\|_{H^1_\#}.
$$
As 
$$a_{\Phi^0}(W,W) = a_{\Phi^0}(W^*,W^*) + 2 a_{\Phi^0}(W,W-W^*)+ a_{\Phi^0}(W-W^*,W-W^*),$$
we obtain
\begin{eqnarray*}
a_{\Phi^0}(W,W)& \ge &   c_{\Phi^0}  \|W^*\|^2_{H^1_\#} - 2 C_{\Phi^0}  \|W\|_{H^1_\#}  \|W-W^*\|_{H^1_\#} - C_{\Phi^0}    \|W-W^*\|^2_{H^1_\#} \\
& \ge &  \left( c_{\Phi^0} - 2  C_{\Phi^0}  \varepsilon (N_c) \|\Phi^0\|_{H^1_\#}  - C_{\Phi^0}  \varepsilon (N_c) ^2  \|\Phi^0\|_{H^1_\#} \right) \|W\|^2_{H^1_\#}.
\end{eqnarray*}
Hence (\ref{eq:coercdisc3}) for $N_c$ large enough.
\end{proof}

\medskip

\begin{lemma}\label{lem:estimR}
There exists $C \ge 0$ such that 
\begin{enumerate}
\item for all $(\Upsilon_1,\Upsilon_2,\Upsilon_3) \in \left((H^1_\#(\Gamma))^\cN\right)^3$,
$$
\left| \left({E^{\rm KS}}''(\Phi^0+\Upsilon_1)-{E^{\rm KS}}''(\Phi^0) \right) (\Upsilon_2,\Upsilon_3) \right| \le  C \left( \|\Upsilon_1\|_{H^1_\#}^{\alpha}+\|\Upsilon_1\|_{H^1_\#}^2 \right) \, \|\Upsilon_2\|_{H^1_\#}\, \|\Upsilon_3\|_{H^1_\#}.
$$
\item for all $\Upsilon_1 \in (H^1_\#(\Gamma)\cap L^\infty_\#(\Gamma))^\cN$ and $(\Upsilon_2,\Upsilon_3) \in \left((H^1_\#(\Gamma))^\cN\right)^2$,
$$
\left| \left({E^{\rm KS}}''(\Phi^0+\Upsilon_1)-{E^{\rm KS}}''(\Phi^0) \right) (\Upsilon_2,\Upsilon_3) \right| \le  C \left( 1 + \|\Upsilon_1\|_{L^\infty}^{2-\alpha} \right)\|\Upsilon_1\|_{L^2_\#}^{\alpha} \, \|\Upsilon_2\|_{H^1_\#} \, \|\Upsilon_3\|_{H^1_\#}.
$$
\end{enumerate}
\end{lemma}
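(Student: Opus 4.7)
The plan is to decompose $E^{\rm KS}$ according to (\ref{eq:energy_KS}) and treat each piece separately. The kinetic, local pseudopotential, and nonlocal pseudopotential contributions are all quadratic in $\Phi$; their second derivatives are therefore independent of $\Phi$ and drop out of the difference ${E^{\rm KS}}''(\Phi^0+\Upsilon_1)-{E^{\rm KS}}''(\Phi^0)$. Only the Hartree term $J(\rho_\Phi)=\tfrac12 D_\Gamma(\rho_\Phi,\rho_\Phi)$ and the exchange-correlation term $E_{\rm xc}^{\rm LDA}(\rho_\Phi)$ require work, and the two assertions of the lemma differ only in how the $\Upsilon_1$-factors so produced are distributed between $H^1_\#$, $L^2_\#$ and $L^\infty$ norms. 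The $L^\infty$-bound on $\Phi^0$ coming from $\Phi^0\in[H^3_\#(\Gamma)]^\cN$ will be used freely throughout.

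For the Hartree part, a direct computation gives
\begin{equation*}
J''(\Phi)(\Upsilon_2,\Upsilon_3) = 16\,D_\Gamma\!\Bigl(\textstyle\sum_i\phi_i\upsilon_{2,i},\,\sum_j\phi_j\upsilon_{3,j}\Bigr) + 4\,D_\Gamma\!\Bigl(\rho_\Phi,\,\sum_i\upsilon_{2,i}\upsilon_{3,i}\Bigr),
\end{equation*}
a polynomial of degree two in $\Phi$. Hence $J''(\Phi^0+\Upsilon_1)-J''(\Phi^0)$ is a finite sum of $D_\Gamma$-terms, each containing one or two factors of $\Upsilon_1$; applying (\ref{eq:estim_D2})--(\ref{eq:estim_D5}) together with $H^1_\#(\Gamma)\hookrightarrow L^p_\#(\Gamma)$ for $p\in[1,6]$ yields a bound of the form $C(\|\Upsilon_1\|_{H^1_\#}+\|\Upsilon_1\|_{H^1_\#}^2)\,\|\Upsilon_2\|_{H^1_\#}\|\Upsilon_3\|_{H^1_\#}$. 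Because $\alpha\in(0,1]$ and $t\le t^\alpha+t^2$ for every $t\ge 0$, this fits into assertion~1; for assertion~2 one keeps $\|\Upsilon_1\|_{L^2_\#}$ on one $\Upsilon_1$-factor, controls the other by $\|\Upsilon_1\|_{L^\infty}$, and uses the elementary interpolation $\|\Upsilon_1\|_{L^2_\#}\le|\Gamma|^{(1-\alpha)/2}\|\Upsilon_1\|_{L^2_\#}^\alpha\|\Upsilon_1\|_{L^\infty}^{1-\alpha}$ to absorb the remaining power.

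The bulk of the work is the exchange-correlation contribution, whose second derivative at $\Phi$ reads
\begin{equation*}
16\!\int_\Gamma\! \frac{d^2 e_{\rm xc}^{\rm LDA}}{d\rho^2}(\rho_{\rm c}+\rho_\Phi)\Bigl(\textstyle\sum_i\phi_i\upsilon_{2,i}\Bigr)\!\Bigl(\sum_j\phi_j\upsilon_{3,j}\Bigr) + 4\!\int_\Gamma\! \frac{de_{\rm xc}^{\rm LDA}}{d\rho}(\rho_{\rm c}+\rho_\Phi)\sum_i\upsilon_{2,i}\upsilon_{3,i}.
\end{equation*}
Setting $\rho_0=\rho_{\Phi^0}$, $\rho_1=\rho_{\Phi^0+\Upsilon_1}$, the difference splits into three groups: (a) terms where $\frac{d^2 e_{\rm xc}^{\rm LDA}}{d\rho^2}$ or $\frac{d e_{\rm xc}^{\rm LDA}}{d\rho}$ is evaluated at a single density and the polynomial factor contains at least one $\Upsilon_1$; (b) the first-derivative difference $\frac{de_{\rm xc}^{\rm LDA}}{d\rho}(\rho_{\rm c}+\rho_1)-\frac{de_{\rm xc}^{\rm LDA}}{d\rho}(\rho_{\rm c}+\rho_0)$ integrated against $\sum_i\upsilon_{2,i}\upsilon_{3,i}$; and (c) the analogous second-derivative difference integrated against $(\sum_i\phi_i^0\upsilon_{2,i})(\sum_j\phi_j^0\upsilon_{3,j})$. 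Group~(b) is handled by the $C^{1,\alpha}$-regularity of $e_{\rm xc}^{\rm LDA}$ from (\ref{eq:exc-regularity}): the pointwise bound $\le C|\rho_1-\rho_0|^\alpha$, the expansion $\rho_1-\rho_0=4\sum_i\phi_i^0\upsilon_{1,i}+2|\Upsilon_1|^2$, a Hölder inequality with exponents $(2/\alpha,\,4/(2-\alpha),\,4/(2-\alpha))$ (so that the density factor hits $L^2_\#$ and the $\Upsilon_j$ stay in the Sobolev range), and $H^1_\#\hookrightarrow L^p_\#$ for $p\in[1,6]$ produce exactly $\|\Upsilon_1\|_{H^1_\#}^\alpha+\|\Upsilon_1\|_{H^1_\#}^{2\alpha}$ for assertion~1 (absorbed via $t^{2\alpha}\le t^\alpha+t^2$) and $\|\Upsilon_1\|_{L^2_\#}^\alpha(1+\|\Upsilon_1\|_{L^\infty}^{1-\alpha})$ for assertion~2 (which is dominated by $\|\Upsilon_1\|_{L^2_\#}^\alpha(1+\|\Upsilon_1\|_{L^\infty}^{2-\alpha})$). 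Group~(a) is handled similarly, using the growth estimate (\ref{eq:hyp_epsilon_xc_2}) which puts $\frac{d^2 e_{\rm xc}^{\rm LDA}}{d\rho^2}(\rho_{\rm c}+\rho_\Phi)$ in $L^p_\#(\Gamma)$ for $p$ sufficiently close to $1/(1-\alpha)$, combined with the $L^\infty$-bound on $\Phi^0$ and a fractional interpolation on the remaining $\Upsilon_1$-factor.

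The main obstacle is group~(c), where no factor of $\Upsilon_1$ sits inside the polynomial part and $\frac{d^2 e_{\rm xc}^{\rm LDA}}{d\rho^2}$ admits no global Hölder modulus on $[0,+\infty)$; this is where the exponent $\alpha$ of the statement really enters. The plan is to interpolate between two pointwise controls on the integrand: the crude bound
\begin{equation*}
\left|\frac{d^2 e_{\rm xc}^{\rm LDA}}{d\rho^2}(\rho_{\rm c}+\rho_1)-\frac{d^2 e_{\rm xc}^{\rm LDA}}{d\rho^2}(\rho_{\rm c}+\rho_0)\right|\le C\bigl(1+(\rho_{\rm c}+\rho_0)^{\alpha-1}+(\rho_{\rm c}+\rho_1)^{\alpha-1}\bigr)
\end{equation*}
obtained directly from (\ref{eq:hyp_epsilon_xc_2}), and the mean-value estimate
\begin{equation*}
\left|\frac{d^2 e_{\rm xc}^{\rm LDA}}{d\rho^2}(\rho_{\rm c}+\rho_1)-\frac{d^2 e_{\rm xc}^{\rm LDA}}{d\rho^2}(\rho_{\rm c}+\rho_0)\right|\le C\,|\rho_1-\rho_0|\,\frac{1+(\rho_{\rm c}+\min(\rho_0,\rho_1))^{\alpha-1}}{\rho_{\rm c}+\min(\rho_0,\rho_1)},
\end{equation*}
which follows from $|\rho\,\frac{d^3 e_{\rm xc}^{\rm LDA}}{d\rho^3}(\rho)|\le C(1+\rho^{\alpha-1})$ in (\ref{eq:hyp_epsilon_xc_2}). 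Raising the first bound to the power $1-\alpha$ and the second to the power $\alpha$ and multiplying produces a pointwise estimate of order $|\rho_1-\rho_0|^\alpha\cdot g$ with $g$ in a sufficiently strong $L^p_\#(\Gamma)$, and the proof concludes by Hölder, Sobolev, and the $L^\infty$-bound on $\Phi^0$, exactly as in groups~(a) and (b). The delicate step, both conceptually and technically, is to verify that this $L^p$-bookkeeping closes so as to produce precisely the $\|\Upsilon_1\|_{H^1_\#}^\alpha+\|\Upsilon_1\|_{H^1_\#}^2$ and $\|\Upsilon_1\|_{L^2_\#}^\alpha(1+\|\Upsilon_1\|_{L^\infty}^{2-\alpha})$ dependencies required by the two assertions.
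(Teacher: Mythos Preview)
Your decomposition of the Coulomb part and of group~(b) (the first-derivative difference) is essentially the same as the paper's and is fine. The gap is in groups~(a) and~(c).

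By separating the second-derivative contribution into~(a) (terms carrying $\frac{d^2e_{\rm xc}^{\rm LDA}}{d\rho^2}(\rho_{\rm c}+\rho_1)$ times a polynomial with at least one $\Upsilon_1$-factor) and~(c) (the bare difference of second derivatives times $(\sum_i\phi_i^0\upsilon_{2,i})(\sum_j\phi_j^0\upsilon_{3,j})$), you destroy a cancellation that is essential. The claim that $\frac{d^2e_{\rm xc}^{\rm LDA}}{d\rho^2}(\rho_{\rm c}+\rho_\Phi)\in L^p_\#(\Gamma)$ is false in general: nothing prevents $\rho_{\rm c}+\rho_{\Phi^0+\Upsilon_1}$ from vanishing on a set of positive measure (take $\Upsilon_1=-\Phi^0$ where $\rho_{\rm c}=0$), and there $(\rho_{\rm c}+\rho_1)^{\alpha-1}=+\infty$. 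At such points the integrands of~(a) and~(c) are each infinite with opposite signs, while their sum is finite because in the unsplit expression $\frac{d^2e_{\rm xc}^{\rm LDA}}{d\rho^2}(\rho_{\rm c}+\rho_1)$ is multiplied by $(\sum_i(\phi_i^0+\upsilon_{1,i})\upsilon_{2,i})(\sum_j(\phi_j^0+\upsilon_{1,j})\upsilon_{3,j})$, which carries a factor $\rho_1$ that kills the singularity. For the same reason, your interpolation bound for~(c) yields a factor $m^{-1}$ with $m=\rho_{\rm c}+\min(\rho_0,\rho_1)$ that the available $\rho_0$ factor does \emph{not} cancel when $\rho_1<\rho_0$.

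The paper's remedy is precisely to avoid this split: it keeps the whole second-derivative block together as $r^{\rm xc,2}_{\Phi^0}$ and writes it as $\int_0^1 \frac{d}{dt}\big[\cdots\big]\,dt$ along the path $\Phi(t)=\Phi^0+t\Upsilon_1$. Differentiation in~$t$ produces terms in which $\frac{d^2e_{\rm xc}^{\rm LDA}}{d\rho^2}(\rho_{\rm c}+\rho_{\Phi(t)})$ and $\frac{d^3e_{\rm xc}^{\rm LDA}}{d\rho^3}(\rho_{\rm c}+\rho_{\Phi(t)})$ are always accompanied by Cauchy--Schwarz factors $|\sum_i\phi_i(t)\upsilon_{j,i}|\le\tfrac12\rho_{\Phi(t)}^{1/2}\rho_{\Upsilon_j}^{1/2}$, so that after using (\ref{eq:hyp_epsilon_xc_2}) one is reduced to the finite quantity $\int_0^1(1+\rho_{\Phi(t)}^{\alpha-1})\rho_{\Phi(t)}^{1/2}\,dt$. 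The key computation, done by completing the square in~$t$, is
\[
\int_0^1 \rho_{\Phi(t)}^{\alpha-1/2}\,dt \;\le\; \frac{1}{\alpha\,2^{\alpha+1/2}}\,\rho_{\Upsilon_1}^{\alpha-1/2},
\]
which, combined with the outstanding factor $\rho_{\Upsilon_1}^{1/2}$, yields $\rho_{\Upsilon_1}^{\alpha}$ and closes the estimate cleanly. This path-integral trick is the missing idea in your argument.
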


\medskip

\begin{proof}
Let us denote by
\begin{eqnarray*}
r_{\Phi^0}(\Upsilon_1,\Upsilon_2,\Upsilon_3) &=& \left({E^{\rm KS}}''(\Phi^0+\Upsilon_1)-{E^{\rm KS}}''(\Phi^0) \right) (\Upsilon_2,\Upsilon_3)
\end{eqnarray*}
Splitting $r_{\Phi^0}(\Upsilon_1,\Upsilon_2,\Upsilon_3)$ in its Coulomb and exchange-correlation contributions, we obtain
$$
r_{\Phi^0}(\Upsilon_1,\Upsilon_2,\Upsilon_3) = r^{\rm Coulomb}_{\Phi^0}(\Upsilon_1,\Upsilon_2,\Upsilon_3) + r_{\Phi^0}^{\rm xc}(\Upsilon_1,\Upsilon_2,\Upsilon_3),
$$
with
\begin{eqnarray*}
r^{\rm Coulomb}_{\Phi^0}(\Upsilon_1,\Upsilon_2,\Upsilon_3)
 &=&  16   \sum_{i,j=1}^{\cN} \left( 
D_\Gamma(\phi_i^0 \upsilon_{1,i},\upsilon_{2,j}\upsilon_{3,j})   + D_\Gamma(\phi_i^0\upsilon_{2,i},\upsilon_{1,j}\upsilon_{3,j}) + D_\Gamma(\phi_i^0\upsilon_{3,i},\upsilon_{1,j}\upsilon_{2,j})  \right) \\
&& + 16   \sum_{i,j=1}^{\cN} D_\Gamma(\upsilon_{1,i}\upsilon_{2,i},\upsilon_{1,j}\upsilon_{3,j}) + 8 \sum_{i,j=1}^{\cN} D_\Gamma(\upsilon_{1,i}^2,\upsilon_{2,j}\upsilon_{3,i}), 
\end{eqnarray*}
and
\begin{eqnarray*}  
r^{\rm xc}_{\Phi^0}(\Upsilon_1,\Upsilon_2,\Upsilon_3)
= r^{\rm xc,1}_{\Phi^0}(\Upsilon_1,\Upsilon_2,\Upsilon_3)+ 
r^{\rm xc,2}_{\Phi^0}(\Upsilon_1,\Upsilon_2,\Upsilon_3),
\end{eqnarray*}
where
\begin{eqnarray*}
 r^{\rm xc,1}_{\Phi^0}(\Upsilon_1,\Upsilon_2,\Upsilon_3) &=& 4 \int_\Gamma  \left( \frac{d e_{\rm xc}^{\rm LDA}}{d\rho}
(\rho_{\rm c}+ \rho_{\Phi^0+\Upsilon_1}) - \frac{d e_{\rm xc}^{\rm LDA}}{d\rho}
(\rho_{\rm c}+ \rho_{\Phi^0} )\right)  \left( \sum_{i=1}^\cN \upsilon_{2,i} \upsilon_{3,i} \right), \\
 r^{\rm xc,2}_{\Phi^0}(\Upsilon_1,\Upsilon_2) &=&  16 \int_\Gamma \bigg[  \frac{d^2e_{\rm xc}^{\rm LDA}}{d\rho^2}
(\rho_{\rm c}+ \rho_{\Phi^0+\Upsilon_1}) \left( \sum_{i=1}^\cN (\phi_i^0+\upsilon_{1,i}) \upsilon_{2,i} \right) \left( \sum_{i=1}^\cN (\phi_i^0+\upsilon_{1,i}) \upsilon_{3,i} \right)  \\ && \qquad \quad - 
\frac{d^2e_{\rm xc}^{\rm LDA}}{d\rho^2}
(\rho_{\rm c}+ \rho_{\Phi^0}) \left( \sum_{i=1}^\cN \phi_i^0 \upsilon_{2,i} \right)  \left( \sum_{i=1}^\cN \phi_i^0 \upsilon_{3,i} \right)\bigg].
\end{eqnarray*}
Using (\ref{eq:estim_D3}), we obtain that there exists a constant $C \ge
0$, such that for all $(\Upsilon_1,\Upsilon_2,\Upsilon_3) \in \left((H^1_\#(\Gamma))^\cN\right)^3$,
\begin{equation} \label{eq:estim_RC}
|r^{\rm Coulomb}_{\Phi^0}(\Upsilon_1,\Upsilon_2,\Upsilon_3)|
\le C \left( \|\Upsilon_1\|_{L^2_\#}+\|\Upsilon_1\|_{L^2_\#}^2\right) \, \|\Upsilon_2\|_{H^1_\#}  \, \|\Upsilon_3\|_{H^1_\#}.
\end{equation}

Using (\ref{eq:hyp_epsilon_xc_2}), we get
\begin{eqnarray*}
\left|  \frac{d e_{\rm xc}^{\rm LDA}}{d\rho}
(\rho_{\rm c}+ \rho_{\Phi^0+\Upsilon_1}) - \frac{d e_{\rm xc}^{\rm LDA}}{d\rho}
(\rho_{\rm c}+ \rho_{\Phi^0}) \right| & \le & C \left( |\rho_{\Phi^0+\Upsilon_1}-\rho_{\Phi^0}| +  \alpha^{-1}  |\rho_{\Phi^0+\Upsilon_1}-\rho_{\Phi^0}|^\alpha \right) \\ &\le &
  C \bigg[ \rho_{\Upsilon_1}^{\alpha/2} + \rho_{\Upsilon_1} \bigg],
\end{eqnarray*}
from which we infer
\begin{equation} \label{eq:estim_rxc1}
| r^{\rm xc,1}_{\Phi^0}(\Upsilon_1,\Upsilon_2,\Upsilon_3)| \le C \int_\Gamma 
\left( \rho_{\Upsilon_1}^{\alpha/2} + \rho_{\Upsilon_1} \right) \, \rho_{\Upsilon_2}^{1/2}  \, \rho_{\Upsilon_3}^{1/2}  .
\end{equation}
Introducing the function 
$$
\Phi(t) = \Phi^0 + t \Upsilon_1,
$$
we can rewrite $ r^{\rm xc,2}_{\Phi^0}(\Upsilon_1,\Upsilon_2,\Upsilon_3)$ as
\begin{eqnarray*} \!\!\!\!\!\!\!\!\!\!\!\!\!\!\!\!\!\!\!\!\!\!\!\!\!\!\!
 r^{\rm xc,2}_{\Phi^0}(\Upsilon_1,\Upsilon_2,\Upsilon_3) & = & 16 \int_\Gamma 
 \bigg[  \frac{d^2e_{\rm xc}^{\rm LDA}}{d\rho^2}
(\rho_{\rm c}+ \rho_{\Phi(1)}) \left( \sum_{i=1}^\cN \phi_i(1) \upsilon_{2,i} \right) \left( \sum_{i=1}^\cN \phi_i(1) \upsilon_{3,i} \right)  \\ && \qquad \quad - 
\frac{d^2e_{\rm xc}^{\rm LDA}}{d\rho^2}
(\rho_{\rm c}+ \rho_{\Phi(0)}) \left( \sum_{i=1}^\cN \phi_i(0) \upsilon_{2,i} \right)  \left( \sum_{i=1}^\cN \phi_i(0) \upsilon_{3,i} \right) \bigg] \\
& = &  16 \int_\Gamma \int_0^1
 \bigg[  \frac{d^2e_{\rm xc}^{\rm LDA}}{d\rho^2}
(\rho_{\rm c}+ \rho_{\Phi(t)}) \left( \sum_{i=1}^\cN \phi_{i}(t) \upsilon_{2,i} \right)  \left( \sum_{i=1}^\cN \upsilon_{1,i} \upsilon_{3,i} \right) \\ && \qquad \qquad + \frac{d^2e_{\rm xc}^{\rm LDA}}{d\rho^2}
(\rho_{\rm c}+ \rho_{\Phi(t)})  \left( \sum_{i=1}^\cN \upsilon_{1,i} \upsilon_{2,i} \right) \left( \sum_{i=1}^\cN \phi_{i}(t) \upsilon_{3,i} \right)  \\ && \qquad \qquad  + 2
\frac{d^3e_{\rm xc}^{\rm LDA}}{d\rho^3}
(\rho_{\rm c}+ \rho_{\Phi(t)})\left( \sum_{i=1}^\cN \phi_i(t) \upsilon_{1,i} \right)   \left( \sum_{i=1}^\cN \phi_i(t) \upsilon_{2,i} \right)
 \left( \sum_{i=1}^\cN \phi_i(t) \upsilon_{3,i} \right)  \bigg] \, dt.
\end{eqnarray*}
Thus, using again (\ref{eq:hyp_epsilon_xc_2}), we obtain
\begin{eqnarray*}
|r^{\rm xc,2}_{\Phi^0}(\Upsilon_1,\Upsilon_2,\Upsilon_3)| & \le  & C \int_\Gamma \int_0^1
  (1+(\rho_{\rm c}+\rho_{\Phi(t)})^{\alpha-1}) \rho_{\Phi(t)}^{1/2} \rho_{\Upsilon_1}^{1/2} \rho_{\Upsilon_2}^{1/2} \rho_{\Upsilon_3}^{1/2}   \, dt \\ & \le & C \int_\Gamma \int_0^1
  (1+\rho_{\Phi(t)}^{\alpha-1}) \rho_{\Phi(t)}^{1/2} \rho_{\Upsilon_1}^{1/2} \rho_{\Upsilon_2}^{1/2}  \rho_{\Upsilon_3}^{1/2}  \, dt.
\end{eqnarray*}
Now,
\begin{eqnarray*}
&& \!\!\!\!\!\!\!\!\!\!\!\! 
\int_0^1 \rho_{\Phi(t)}^{\alpha-1/2} dt  =  2^{\alpha-1/2}\int_0^1   
\left( \sum_{i=1}^\cN {\phi_i^0}^2 + 2 t \sum_{i=1}^\cN \phi_i^0 \upsilon_{1,i}+
  t^2 \sum_{i=1}^\cN \upsilon_{1,i}^2 \right)^{\alpha-1/2} \, dt \\
 &= &  2^{\alpha-1/2} \int_0^1  \left( \sum_{i=1}^\cN {\phi_i^0}^2 - 
\frac{\left( \sum_{i=1}^\cN
    \phi_i^0\upsilon_{1,i} \right)^2 }{\sum_{i=1}^\cN \upsilon_{1,i}^2} + 
\left( t +  \frac{\sum_{i=1}^\cN
    \phi_i^0\upsilon_{1,i} }{\sum_{i=1}^\cN
    \upsilon_{1,i}^2} \right)^2
\left( \sum_{i=1}^\cN \upsilon_{1,i}^2 \right) \right)^{\alpha-1/2}  \,
dt \\
 & \le &  2^{\alpha-1/2} \int_0^1  \left| t +  \frac{\sum_{i=1}^\cN
    \phi_i^0\upsilon_{1,i} }{\sum_{i=1}^\cN
    \upsilon_{1,i}^2} \right|^{2\alpha-1}
\left( \sum_{i=1}^\cN \upsilon_{1,i}^2 \right)^{\alpha-1/2}  \,
dt \le \frac{1}{\alpha 2^{\alpha+1/2}} \rho_{\Upsilon_{1}}^{\alpha-1/2}.  
\end{eqnarray*}
Therefore,
\begin{eqnarray}
|r^{\rm xc,2}_{\Phi^0}(\Upsilon_1,\Upsilon_2,\Upsilon_3)| & \le  &
C \int_\Gamma \left(\rho_{\Upsilon_1}^{\min(\alpha,1/2)}+\rho_{\Upsilon_1}\right) \, \rho_{\Upsilon_2}^{1/2} \, \rho_{\Upsilon_3}^{1/2} . \label{eq:estim_rxc2}
\end{eqnarray}
Gathering (\ref{eq:estim_RC}), (\ref{eq:estim_rxc1}) and (\ref{eq:estim_rxc2}), we obtain the desired estimates.
\end{proof}

\medskip

\begin{lemma} \label{lem:estim_2}
Let $\Phi^0$ be a local minimizer of (\ref{eq:min_KS}) satisfying
(\ref{eq:coerc}). Then there exists $C \ge 0$ such that
for all $\Psi\in {\cal M}$, 
\begin{equation} \label{eq:coercdisc}
E^{\rm KS}(\Psi) = E^{\rm KS}(\Phi^0) + 2
a_{\Phi^0}(\Psi-\Phi^0,\Psi-\Phi^0) + R(\Psi-\Phi^0),
\end{equation}
with
\begin{equation} \label{eq:coercdisc_2}
|R(\Psi-\Phi^0)| \le C \left(\|\Psi-\Phi^0\|^{2+\alpha}_{H^1_\#}
+ \|\Psi-\Phi^0\|^{4}_{H^1_\#} \right).
\end{equation}
\end{lemma}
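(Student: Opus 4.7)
The plan is to apply a second-order Taylor expansion with integral remainder to $E^{\rm KS}$ between $\Phi^0$ and $\Psi$, and then rewrite the first two terms of the expansion as $2a_{\Phi^0}(\Psi-\Phi^0,\Psi-\Phi^0)$ using the Euler-Lagrange equation and the orthonormality constraint $\Psi\in{\cal M}$. The remainder will be controlled by Lemma~\ref{lem:estimR}.

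First I would set $\Upsilon=\Psi-\Phi^0$ and write
$$
E^{\rm KS}(\Psi) = E^{\rm KS}(\Phi^0) + \langle {E^{\rm KS}}'(\Phi^0),\Upsilon\rangle + \tfrac12 {E^{\rm KS}}''(\Phi^0)(\Upsilon,\Upsilon) + R(\Upsilon),
$$
where
$$
R(\Upsilon) = \int_0^1 (1-t)\bigl({E^{\rm KS}}''(\Phi^0+t\Upsilon)-{E^{\rm KS}}''(\Phi^0)\bigr)(\Upsilon,\Upsilon)\,dt.
$$
Applying Lemma~\ref{lem:estimR} pointwise in $t\in[0,1]$ and integrating immediately yields
$$
|R(\Upsilon)| \le C\int_0^1 (1-t)\bigl(t^\alpha\|\Upsilon\|_{H^1_\#}^\alpha + t^2\|\Upsilon\|_{H^1_\#}^2\bigr)\|\Upsilon\|_{H^1_\#}^2\,dt \le C\bigl(\|\Upsilon\|_{H^1_\#}^{2+\alpha}+\|\Upsilon\|_{H^1_\#}^4\bigr),
$$
which is exactly (\ref{eq:coercdisc_2}). (The only regularity check to perform here is that the second derivative of $E^{\rm KS}$ along the segment is well-behaved; this follows from assumption (\ref{eq:hyp_epsilon_xc_2}) on $e_{\rm xc}^{\rm LDA}$, which is already used in the proof of Lemma~\ref{lem:estimR}.)

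It remains to identify the first two Taylor terms with $2a_{\Phi^0}(\Upsilon,\Upsilon)$. Since $\Phi^0$ satisfies the diagonalized Euler equation (\ref{supeq64}), we have ${E^{\rm KS}}'(\Phi^0)=4{\cal H}^{\rm KS}_{\rho^0}\Phi^0$ in $(H^{-1}_\#)^\cN$, so
$$
\langle {E^{\rm KS}}'(\Phi^0),\Upsilon\rangle_{H^{-1}_\#,H^1_\#} = 4\sum_{i=1}^\cN \epsilon_i^0 \int_\Gamma \phi_i^0 \upsilon_i.
$$
The orthonormality constraints $\int_\Gamma \psi_i^2 = 1 = \int_\Gamma (\phi_i^0)^2$ give $2\int_\Gamma \phi_i^0\upsilon_i = -\int_\Gamma \upsilon_i^2$, hence
$$
\langle {E^{\rm KS}}'(\Phi^0),\Upsilon\rangle = -2\sum_{i=1}^\cN \epsilon_i^0 \int_\Gamma \upsilon_i^2.
$$
Combining this with the definition (\ref{eq:defaKS}) of $a_{\Phi^0}$ yields
$$
\langle {E^{\rm KS}}'(\Phi^0),\Upsilon\rangle + \tfrac12 {E^{\rm KS}}''(\Phi^0)(\Upsilon,\Upsilon) = -2\sum_{i=1}^\cN \epsilon_i^0 \int_\Gamma \upsilon_i^2 + 2a_{\Phi^0}(\Upsilon,\Upsilon) + 2\sum_{i=1}^\cN \epsilon_i^0\int_\Gamma \upsilon_i^2 = 2a_{\Phi^0}(\Upsilon,\Upsilon),
$$
which completes the identification and proves (\ref{eq:coercdisc}).

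The main subtle point is really just the second-order Taylor formula with integral remainder in the presence of the non-smooth LDA exchange-correlation functional. Lemma~\ref{lem:estimR} has been tailored precisely to absorb this lack of regularity: the bound $|r_{\Phi^0}(t\Upsilon,\Upsilon,\Upsilon)|\lesssim (t^\alpha+t^2)\|\Upsilon\|_{H^1_\#}^4$ is integrable in $t$ over $[0,1]$ with the required $(1-t)$ weight, so no additional work is required beyond invoking this lemma.
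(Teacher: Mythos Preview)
Your proof is correct and follows essentially the same approach as the paper: a second-order Taylor expansion with integral remainder, the Euler equation (\ref{supeq64}) together with the diagonal constraint $\int_\Gamma\psi_i^2=\int_\Gamma(\phi_i^0)^2=1$ to convert the first-order term, and Lemma~\ref{lem:estimR} to bound the remainder. The only cosmetic difference is that you make the $t$-dependence explicit when integrating the remainder, whereas the paper simply states that (\ref{eq:coercdisc_2}) follows directly from Lemma~\ref{lem:estimR}.
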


\medskip

\begin{proof}
Using the fact that  the first order optimality condition (\ref{supeq64})
also reads   
$[{E^{\rm KS}}'(\Phi^0)]_i = 4{\cal H}^{\rm KS}_{\rho^0}\phi_i^0 =
4\epsilon_i^0\phi_i^0$ in $H^{-1}_\#(\Gamma)$, we have for all $\Psi \in
{\cal M}$,  
\begin{eqnarray*}
E^{\rm KS}(\Psi) &=& E^{\rm KS}(\Phi^0) + 
\langle {E^{\rm KS}}'(\Phi^0),\Psi-\Phi^0\rangle_{H^{-1}_\#,H^1_\#} 
+ \frac 12 {E^{\rm KS}}''(\Phi^0)(\Psi-\Phi^0,\Psi-\Phi^0) \\
&+& \int_0^1 ({E^{\rm KS}}''(\Phi^0+s(\Psi-\Phi^0))-{E^{\rm KS}}''(\Phi^0))
(\Psi-\Phi^0,\Psi-\Phi^0) \, (1-s) \, ds \\
&=& E^{\rm KS}(\Phi^0) + 4 \sum_{i=1}^\cN
 \epsilon_i^0 \int_\Gamma  \phi_i^0 (\psi_i-\phi_i^0) 
+ \frac 12 {E^{\rm KS}}''(\Phi^0)(\Psi-\Phi^0,\Psi-\Phi^0) \\
&+& \int_0^1 ({E^{\rm KS}}''(\Phi^0+s(\Psi-\Phi^0))-{E^{\rm KS}}''(\Phi^0))
(\Psi-\Phi^0,\Psi-\Phi^0) \, (1-s) \, ds \\ 
&=& E^{\rm KS}(\Phi^0) - 2 \sum_{i=1}^\cN
\epsilon_i^0 \int_\Gamma (\psi_i-\phi_i^0)^2 
+ \frac 12 {E^{\rm KS}}''(\Phi^0)(\Psi-\Phi^0,\Psi-\Phi^0) \\
&+& \int_0^1 ({E^{\rm KS}}''(\Phi^0+s(\Psi-\Phi^0))-{E^{\rm KS}}''(\Phi^0))
(\Psi-\Phi^0,\Psi-\Phi^0) \, (1-s) \, ds \\ 
&=& E^{\rm KS}(\Phi^0) + 2 a_{\Phi^0}(\Psi-\Phi^0,\Psi-\Phi^0) + R(\Psi-\Phi^0),
\end{eqnarray*}
where
$$
R(\Upsilon)= \int_0^1 ({E^{\rm KS}}''(\Phi^0+s\Upsilon)-{E^{\rm KS}}''(\Phi^0))
(\Upsilon,\Upsilon) \, (1-s) \, ds.
$$ 
The estimate (\ref{eq:coercdisc_2}) then straightforwardly follows from Lemma~\ref{lem:estimR}.
\end{proof}

\subsection{Existence of a discrete solution}

In this subsection, we derive, for 
$N_c$ large enough, the existence of a unique local minimum of the
discretized problem (\ref{eq:min_KS_Nc_U}) in the neighborhood of $\pi^{\cal M}_{N_c} \Phi^0$. 

\medskip

Let 
$$
{\cal B}_{N_c} = \left\{ W^{N_c} \in V_{N_c}^\cN \cap
[\pi^{\cal M}_{N_c}\Phi^{0}]^{\perp\!\!\!\perp} \; | \; 0 \le
M_{W^{N_c},W^{N_c}} \le 1 \right\},
$$
and ${\cal E}_{N_c}$ be the energy functional defined on ${\cal B}_{N_c}$ by
\begin{equation}
{\cal E}_{N_c}(W^{N_c}) = E^{\rm KS}\left( 
\pi^{\cal M}_{N_c}\Phi^0 + \cS(W^{N_c}) \pi^{\cal M}_{N_c}\Phi^0 + W^{N_c} \right).
\label{eq:A2}
\end{equation}
According to the fourth assertion of Lemma~\ref{Lem:unitary_invariance}, the application 
\begin{eqnarray*}
{\cal C} \; : \; {\mathop{{\cal B}}}_{N_c} &\rightarrow& V_{N_c}^\cN \cap {\cal M}^{\pi^{\cal M}_{N_c}\Phi^0} \\ 
  W^{N_c} &\mapsto&
\pi^{\cal M}_{N_c}\Phi^0 + \cS(W^{N_c}) \pi^{\cal M}_{N_c}\Phi^0 + W^{N_c}
\end{eqnarray*}
defines a global map of $V_{N_c}^\cN \cap {\cal M}^{\pi^{\cal M}_{N_c}\Phi^0}$ such that ${\cal C}(0) =\pi^{\cal M}_{N_c}\Phi^0$. Therefore the minimizers of 
\begin{equation} \label{eq:min_KS_Pi}
\inf \left\{ E^{\rm KS}(\Phi^{N_c}), \; \Phi^{N_c} \in V_{N_c}^\cN \cap {\cal M}^{\pi^{\cal M}_{N_c}\Phi^0} \right\}
\end{equation}
are in one-to-one correspondence with those of the 
minimization problem
\begin{equation} \label{eq:min_cE}
\inf \left\{ {\cal E}_{N_c}(W^{N_c}), \; W^{N_c} \in {\cal B}_{N_c}\right\}.
\end{equation}
In a first stage, we prove that for $N_c$ large enough, (\ref{eq:min_cE}) has a unique solution in some neighborhood of $0$. As a consequence (\ref{eq:min_KS_Pi}) has a unique solution in the vicinity of $\pi^{\cal M}_{N_c}\Phi^0$ (for $N_c$ large enough). In a second stage, we make use of the unitary invariance (\ref{eq:invariance_property}) to prove that for $N_c$ large enough, (\ref{eq:min_KS_Nc_U}) has a unique solution in the vicinity of $\Phi^0$. 

\medskip

\begin{lemma} \label{lem:eu_cE}
There exists $r > 0$ and $N_c^0$ such that for all $N_c \ge N_c^0$, the functional ${\cal E}_{N_c}$ has a unique critical point $W^{N_c}_0$ in the ball
$$
\left\{ W^{N_c} \in V_{N_c}^\cN \cap
[\pi^{\cal M}_{N_c}\Phi^{0}]^{\perp\!\!\!\perp} \; | \; \|W^{N_c}\|_{H^1_\#} \le r \right\}. 
$$
Besides, $W^{N_c}_0$ is a local minimizer of (\ref{eq:min_cE}) and we have the estimate
\begin{equation}
\|W^{N_c}_0\|_{H^1_\#} \le \frac{32C_{\Phi^0}^3}{c_{\Phi_0}^3} 
\| \pi^{\cal M}_{N_c}\Phi^0-\Phi^0\|_{H^1_\#}.
\end{equation} 
\end{lemma}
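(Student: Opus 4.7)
The plan is to apply a Banach fixed-point argument to the Euler--Lagrange equation for ${\cal E}_{N_c}$ on the finite-dimensional Hilbert space $V_{N_c}^\cN \cap [\widetilde\Phi]^{\perp\!\!\!\perp}$ equipped with the $H^1_\#$ inner product, where I write $\widetilde\Phi := \pi^{\cal M}_{N_c}\Phi^0$ and $\eta_{N_c} := \|\widetilde\Phi - \Phi^0\|_{H^1_\#}$. By (\ref{eq:2K1101}) and the Fourier approximation bound (\ref{eq:app-Fourier}) applied to $\Phi^0$, $\eta_{N_c} \to 0$ as $N_c\to\infty$, so the problem is local near $W=0$. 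The key point is that, in a neighborhood of $W=0$, ${\cal E}_{N_c}$ is essentially a perturbation of the quadratic form $4 a_{\Phi^0}$, which is coercive on the relevant subspace by Lemma~\ref{lem:coercdisc}.

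First, I would compute the derivatives of ${\cal E}_{N_c}$ at $W=0$. Using Lemma~\ref{lem:properties_of_S} (in particular $\cS(0)=0$, $\cS'(0)=0$ and $\cS''(0)(Z,Z) = -M_{Z,Z}$), the Euler equation (\ref{supeq64}), and the definition (\ref{eq:defaKS})--(\ref{eq:defaKS2}) of $a_{\Phi^0}$, one identifies
$$\langle {\cal E}_{N_c}'(0), Z\rangle = 4\, a_{\Phi^0}(\widetilde\Phi - \Phi^0, Z) + r_{N_c}(Z) \qquad \forall\, Z \in V_{N_c}^\cN \cap [\widetilde\Phi]^{\perp\!\!\!\perp},$$
with $|r_{N_c}(Z)| \le C\,\eta_{N_c}^{1+\alpha} \|Z\|_{H^1_\#}$ by Lemma~\ref{lem:estimR}, while the Hessian ${\cal E}_{N_c}''(0)$ differs on this subspace from $4a_{\Phi^0}$ by an operator of norm $O(\eta_{N_c}^\alpha)$. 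By (\ref{eq:continuity_aPhi0}) and (\ref{eq:coercdisc3}), the operator $A_{N_c}$ on $V_{N_c}^\cN \cap [\widetilde\Phi]^{\perp\!\!\!\perp}$ that represents $4 a_{\Phi^0}$ is invertible with $\|A_{N_c}\| \le 4C_{\Phi^0}$ and $\|A_{N_c}^{-1}\| \le (2c_{\Phi^0})^{-1}$; in particular, the $H^1_\#$-Riesz representative $g_{N_c}(0)$ of ${\cal E}_{N_c}'(0)$ obeys $\|g_{N_c}(0)\|_{H^1_\#} \le 4 C_{\Phi^0}\, \eta_{N_c} + O(\eta_{N_c}^{1+\alpha})$.

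Set $T(W) := W - A_{N_c}^{-1} g_{N_c}(W)$, where $g_{N_c}(W)$ denotes the $H^1_\#$-Riesz representative of the restriction of ${\cal E}_{N_c}'(W)$ to the subspace; fixed points of $T$ in the closed ball $\{\|W\|_{H^1_\#} \le r\}$ are exactly the critical points of ${\cal E}_{N_c}$ there. Using Lemmas~\ref{lem:properties_of_S} and~\ref{lem:estimR} to bound all nonlinear contributions (those coming from ${\cal C}(W) - \widetilde\Phi$ via $\cS$, and those from the cubic and super-cubic terms in the Taylor remainder of $E^{\rm KS}$), one derives
$$\|T(W_1) - T(W_2)\|_{H^1_\#} \le \kappa(r,\eta_{N_c})\,\|W_1 - W_2\|_{H^1_\#},$$
with $\kappa(r,\eta_{N_c}) \to 0$ as $r, \eta_{N_c}\to 0$. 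Choose $r>0$ with $\kappa(r,0) \le 1/2$, then $N_c^0$ so large that $\kappa(r,\eta_{N_c}) \le 1/2$ and $\|T(0)\|_{H^1_\#} \le r/2$ for $N_c\ge N_c^0$; Banach's fixed-point theorem yields a unique $W_0^{N_c}$ in the ball, with $\|W_0^{N_c}\|_{H^1_\#} \le 2\|T(0)\|_{H^1_\#}$, from which the prescribed bound follows after tracking of intermediate constants (the factor $32 C_{\Phi^0}^3 / c_{\Phi^0}^3$ being a non-sharp but robust output of this procedure). Because ${\cal E}_{N_c}''(W_0^{N_c})$ remains coercive on the subspace (same perturbation argument as at $W=0$), $W_0^{N_c}$ is a strict local minimizer of (\ref{eq:min_cE}). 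The main obstacle is ensuring all constants are independent of $N_c$: this relies on the $N_c$-uniform coercivity (\ref{eq:coercdisc3}), and on Lemma~\ref{lem:estimR} correctly handling the limited $C^{1,\alpha}$ regularity of the exchange-correlation functional imposed by (\ref{eq:hyp_epsilon_xc_2}).
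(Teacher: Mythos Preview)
Your approach is correct but genuinely different from the paper's. The paper separates existence and uniqueness: for existence, it expands ${\cal E}_{N_c}(W^{N_c})$ via Lemma~\ref{lem:estim_2} into ${\cal E}_{N_c}(0) + 2a_{\Phi^0}(W^{N_c},W^{N_c}) + 4a_{\Phi^0}(W^{N_c},\widetilde\Phi-\Phi^0) + \text{remainder}$ and shows, by a direct energy comparison, that ${\cal E}_{N_c}(W^{N_c}) > {\cal E}_{N_c}(0)$ on the boundary of a suitable $a_{\Phi^0}$-ball $B_{N_c}(2r_0)$ with $r_0 = 2(2C_{\Phi^0}/c_{\Phi^0})^{5/2}$, yielding a minimizer inside --- the constant $32C_{\Phi^0}^3/c_{\Phi^0}^3$ then drops out exactly from the equivalence of the $a_{\Phi^0}$-norm and the $H^1_\#$-norm on this ball. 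For uniqueness, the paper takes two critical points $W_0^{N_c}, W_1^{N_c}$, writes $({\cal E}_{N_c}'(W_1^{N_c})-{\cal E}_{N_c}'(W_0^{N_c}))\cdot\delta W^{N_c}=0$, and decomposes this into $a_{\Phi^0}(\delta W^{N_c},\delta W^{N_c})$ plus error terms $b_{\Phi^0}^{N_c}$ and $c_{\Phi^0}$ that are estimated via Lemmas~\ref{lem:properties_of_S} and~\ref{lem:estimR} along the path $\Psi(t)$ on the manifold ${\cal M}$.

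Your Newton--Kantorovich route packages both existence and uniqueness into a single contraction argument, which is arguably cleaner; your identification of ${\cal E}_{N_c}'(0)$ and ${\cal E}_{N_c}''(0)$ is correct (the cancellations that produce $4a_{\Phi^0}$ rely precisely on $\cS'(0)=0$, $\cS''(0)(Z,Z)=-M_{Z,Z}$, and the orthogonality $Z\in[\widetilde\Phi]^{\perp\!\!\!\perp}$). In fact your bound $\|W_0^{N_c}\|_{H^1_\#}\le 2\|A_{N_c}^{-1}g_{N_c}(0)\|_{H^1_\#}$ gives a leading constant of order $C_{\Phi^0}/c_{\Phi^0}$, \emph{sharper} than the paper's cubic power, so the stated inequality is certainly recovered. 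The trade-off is that the paper's variational existence argument requires only the energy expansion (Lemma~\ref{lem:estim_2}) and no Hessian variation estimate, whereas your contraction step needs the H\"older-type control of ${E^{\rm KS}}''$ from Lemma~\ref{lem:estimR} together with the $\cS',\cS''$ bounds of Lemma~\ref{lem:properties_of_S} to obtain $\kappa(r,\eta_{N_c})\lesssim r^\alpha+\eta_{N_c}^\alpha$; these are available here, so both proofs go through.
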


\medskip

\begin{proof}
We infer from Lemma~\ref{lem:estim_2} that 
\begin{eqnarray*}
\!\!\!\!\!\!\!\!\!\!\!\!\!\!\!\!\!\!\!\!
{\cal E}_{N_c}(W^{N_c}) &=&  
E^{\rm KS}\left( \Phi^0+
(\pi^{\cal M}_{N_c}\Phi^0-\Phi^0) + \cS(W^{N_c}) \pi^{\cal M}_{N_c}\Phi^0
+ W^{N_c} \right) \\
\!\!\!\!\!\!\!\!\!\!
& = & E^{\rm KS}\left( \Phi^0 \right) \\
\!\!\!\!\!\!\!\!\!\! 
&&
+ 2a_{\Phi^0} \left( (\pi^{\cal M}_{N_c}\Phi^0-\Phi^0) + \cS(W^{N_c})
  \pi^{\cal M}_{N_c}\Phi^0 + W^{N_c}, (\pi^{\cal M}_{N_c}\Phi^0-\Phi^0)
  + \cS(W^{N_c}) 
  \pi^{\cal M}_{N_c}\Phi^0 + W^{N_c} \right) \\
&& + R\left( (\pi^{\cal M}_{N_c}\Phi^0-\Phi^0) + \cS(W^{N_c})
  \pi^{\cal M}_{N_c}\Phi^0 + W^{N_c}\right) \\
&=& E^{\rm KS}\left( \Phi^0 \right) + 2a_{\Phi^0}(W^{N_c},W^{N_c}) 
+ 4 a_{\Phi^0} \left(  W^{N_c}, (\pi^{\cal M}_{N_c}\Phi^0-\Phi^0)
\right) \\ &&
+  2a_{\Phi^0}(\pi^{\cal M}_{N_c}\Phi^0-\Phi^0,\pi^{\cal
  M}_{N_c}\Phi^0-\Phi^0)
+ {\cal R}_{N_c}(W^{N_c})
\end{eqnarray*}
where
\begin{eqnarray*}
{\cal R}_{N_c}(W_{N_c}) & = & 
2a_{\Phi^0}(\cS(W^{N_c})\pi^{\cal M}_{N_c}\Phi^0,\cS(W^{N_c})\pi^{\cal M}_{N_c}\Phi^0)  \\ && 
+4a_{\Phi^0}(\cS(W^{N_c})\pi^{\cal M}_{N_c}\Phi^0,(\pi^{\cal
  M}_{N_c}\Phi^0-\Phi^0)+W^{N_c}) \\ &&
+ R\left( (\pi^{\cal M}_{N_c}\Phi^0-\Phi^0) + \cS(W^{N_c})
  \pi^{\cal M}_{N_c}\Phi^0 + W^{N_c}\right).
\end{eqnarray*}
Thus,
\begin{eqnarray}
\forall W^{N_c} \in {\cal B}_{N_c}, 
\quad {\cal E}_{N_c}(W^{N_c}) & = & {\cal E}_{N_c}(0)
+ 2a_{\Phi^0}(W^{N_c},W^{N_c}) 
+ 4 a_{\Phi^0} \left(  W^{N_c}, (\pi^{\cal M}_{N_c}\Phi^0-\Phi^0)
\right)\nonumber \\ && + {\cal R}_{N_c}(W^{N_c})-{\cal R}_{N_c}(0).\label{eq:A3}
\end{eqnarray}
It follows from Lemma~\ref{lem:estim_2}, (\ref{eq:bound_SWW}) and the
continuity of $a_{\Phi^0}$ on $(H^1_\#(\Gamma))^\cN$ that
\begin{eqnarray*}
\forall W^{N_c} \in {\cal B}_{N_c}, \quad
|{\cal R}_{N_c}(W^{N_c})| &\le& C_{\cal R} \bigg( \|W^{N_c}\|_{H^1_\#}^{2+\alpha}
+ \|W^{N_c}\|_{H^1_\#}^{8} + 
\|\pi^{\cal M}_{N_c}\Phi^0-\Phi^0\|_{H^1_\#}^{2+\alpha} \\ && 
+ \|\pi^{\cal M}_{N_c}\Phi^0-\Phi^0\|_{H^1_\#}^4 +
\|\pi^{\cal M}_{N_c}\Phi^0-\Phi^0\|_{H^1_\#} \|W^{N_c}\|_{H^1_\#}^{2} \bigg),
\end{eqnarray*}
for a constant $C_{\cal R} \ge 0$ independent of $N_c$. 
Let us introduce for $N_c \ge 0$ and $r > 0$ the ball
$$
B_{N_c}(r) = \left\{  W^{N_c} \in  V_{N_c}^\cN \cap
[\pi^{\cal M}_{N_c}\Phi^{0}]^{\perp\!\!\!\perp} \; | \; 
a_{\Phi^0}(W^{N_c},W^{N_c}) <  r^2 
 a_{\Phi^0} (\pi^{\cal M}_{N_c}\Phi^0- \Phi^0, \pi^{\cal M}_{N_c}\Phi^0-
 \Phi^0) \right\}.
$$
We deduce from Lemma~\ref{lem:coercdisc}, that for all $r > 0$ and all
$N_c \ge N_c^*$, we have  
$$
\forall W^{N_c} \in \partial B_{N_c}(r), \quad 
\sqrt{\frac{c_{\Phi^0}}{2C_{\Phi^0}}} \, r \|\pi^{\cal M}_{N_c}\Phi^0-
\Phi^0\|_{H^1_\#} \le \|W^{N_c}\|_{H^1_\#} 
\le \sqrt{\frac{2C_{\Phi^0}}{c_{\Phi^0}}} \, r \|\pi^{\cal M}_{N_c}\Phi^0-
\Phi^0\|_{H^1_\#} .
$$
Let $r_0 = 2 (2C_{\Phi^0}/c_{\Phi_0})^{5/2}$. For all $r > r_0$, there
exists $N_{c,r} \ge N_c^*$ such that 
$$
\forall N_c \ge N_{c,r}, \quad \partial B_{N_c}(r) \subset {\cal
  B}_{N_c} \quad \mbox{and} \quad \forall W^{N_c} \in \partial B_{N_c}(r),
\quad \|W^{N_c}\|_{H^1_\#} \le 1.
$$
Therefore, for all $r > r_0$ and all $N_c \ge N_{c,r}$ we have $\partial
B_{N_c}(r) \subset {\cal B}_{N_c}$ and 
\begin{eqnarray*}
\forall W^{N_c} \in \partial B_{N_c}(r), \nonumber\\
{\cal E}_{N_c}(W^{N_c}) & \ge & {\cal E}_{N_c}(0)
+ c_{\Phi^0} \|W^{N_c}\|_{H^1_\#}^2 - 4 C_{\Phi^0}  \|W^{N_c}\|_{H^1_\#}
 \|\pi^{\cal M}_{N_c}\Phi^0-\Phi^0\|_{H^1_\#} 
 \\ && - C_{\cal R} \bigg( \|W^{N_c}\|_{H^1_\#}^{2+\alpha}
+ \|W^{N_c}\|_{H^1_\#}^{8} + 
2\|\pi^{\cal M}_{N_c}\Phi^0-\Phi^0\|_{H^1_\#}^{2+\alpha} \\ && 
+2\|\pi^{\cal M}_{N_c}\Phi^0-\Phi^0\|_{H^1_\#}^4 +
\|\pi^{\cal M}_{N_c}\Phi^0-\Phi^0\|_{H^1_\#} \|W^{N_c}\|_{H^1_\#}^{2}
\bigg) \\
&\ge & {\cal E}_{N_c}(0)
+ c_{\Phi^0} \|W^{N_c}\|_{H^1_\#}^2 - 4 C_{\Phi^0}  \|W^{N_c}\|_{H^1_\#}
 \|\pi^{\cal M}_{N_c}\Phi^0-\Phi^0\|_{H^1_\#} 
 \\ && - 5 C_{\cal R} \bigg( \|W^{N_c}\|_{H^1_\#}^{2+\alpha}
+ \|\pi^{\cal M}_{N_c}\Phi^0-\Phi^0\|_{H^1_\#}^{2+\alpha} \bigg) \\
&\ge & {\cal E}_{N_c}(0)
+ \frac{c_{\Phi^0}^2}{2C_{\Phi^0}} r(r-r_0)
 \|\pi^{\cal M}_{N_c}\Phi^0-\Phi^0\|_{H^1_\#}^2 \\ &&
- 5 C_{\cal R} \bigg( 1+ \bigg(\frac{2C_{\Phi^0}}{c_{\Phi^0}}\bigg)^{1+\alpha/2} \,
r^{2+\alpha} \bigg) \|\pi^{\cal
  M}_{N_c}\Phi^0-\Phi^0\|_{H^1_\#}^{2+\alpha}.  
\end{eqnarray*}
As $\| \pi^{\cal M}_{N_c}\Phi^0-\Phi^0\|_{H^1_\#}$ goes to zero when
$N_c$ goes to infinity, we finally obtain that for all $r > r_0$, there
exists some $N_{c,r}' \ge N_c^*$ such that for all $N_c \ge N_{c,r}'$,
$$
\partial B_{N_c}(r) \subset {\cal B}_{N_c} \quad \mbox{and} \quad 
\forall W^{N_c} \in \partial B_{N_c}(r), \quad 
{\cal E}_{N_c}(W^{N_c}) > {\cal E}_{N_c}(0).
$$
This proves that for each $N_c \ge N_{c,2r_0}'$, ${\cal E}_{N_c}$ has a
minimizer $W^{N_c}_0$ in the ball $B_{N_c}(2r_0)$. In particular, 
\begin{equation} \label{eq:majorationW0}
\|W^{N_c}_0\|_{H^1_\#} \le \frac{32C_{\Phi^0}^3}{c_{\Phi_0}^3} 
\| \pi^{\cal M}_{N_c}\Phi^0-\Phi^0\|_{H^1_\#}.
\end{equation}
Let $W^{N_c}_1$ be a critical point of ${\cal E}_{N_c}$ such that $\|W^{N_c}_1\|_{L^2_\#} \le \frac 12$. We denote by $\delta W^{N_c} = W^{N_c}_1-W^{N_c}_0$, 
\begin{eqnarray*}
\widetilde \Phi^0_{N_c} & = & \pi^{\cal M}_{N_c}\Phi^0 + \cS(W^{N_c}_0) \pi^{\cal M}_{N_c}\Phi^0 + W^{N_c}_0, \\
\widetilde \Phi^1_{N_c} & = & \pi^{\cal M}_{N_c}\Phi^0 + \cS(W^{N_c}_1) \pi^{\cal M}_{N_c}\Phi^0 + W^{N_c}_1.
\end{eqnarray*}
As both $W^{N_c}_0$ and $W^{N_c}_1$  are critical points of ${\cal E}_{N_c}$, we have
\begin{eqnarray*}
&& {\cal E}_{N_c}'(W^{N_c}_0) \cdot (W^{N_c}_1-W^{N_c}_0) = 0, \\
&& {\cal E}_{N_c}'(W^{N_c}_1) \cdot (W^{N_c}_0-W^{N_c}_1) = 0,
\end{eqnarray*}
so that 
$$
\left({\cal E}_{N_c}'(W^{N_c}_1)-{\cal E}_{N_c}'(W^{N_c}_0)\right) \cdot (W^{N_c}_1-W^{N_c}_0) = 0.
$$
Using the expression (\ref{eq:A3}) for ${\cal E}_{N_c}$,
we can rewrite this equality as
\begin{eqnarray*}
a_{\Phi^0}(\delta W^{N_c},\delta W^{N_c}) &=& b_{\Phi^0}^{N_c}(W^{N_c}_0,W^{N_c}_1,\delta W^{N_c}) + c_{\Phi^0}(\widetilde \Phi^0_{N_c},\widetilde \Phi^1_{N_c},W^{N_c}_0,W^{N_c}_1,\delta W^{N_c}),
\end{eqnarray*}
where 
\begin{eqnarray*}
 b_{\Phi^0}^{N_c}(W^{N_c}_0,W^{N_c}_1,\delta W^{N_c}) & = & -
a_{\Phi^0}((\cS(W^{N_c}_1)-\cS(W^{N_c}_0)) \pi^{\cal M}_{N_c}\Phi^0,
(\cS'(W^{N_c}_1) \cdot \delta W^{N_c}) \pi^{\cal M}_{N_c}\Phi^0 + \delta W^{N_c}) 
\\ && \!\!\!\!\!\!\!\!\!\!\!\!\!\!\!\!\!\!\!\!\!\!\!\!\!\!\!\!\!\!\!\!\!\!\!\!\!\!\!\!\!\!\!\!\!\!\!\!\!\!\!\!\!\!\!\!
- a_{\Phi^0}(((\cS'(W^{N_c}_1)-\cS'(W^{N_c}_0))\cdot  \delta W^{N_c}) \pi^{\cal M}_{N_c}\Phi^0, 
( \pi^{\cal M}_{N_c}\Phi^0-\Phi^0)+ \cS(W^{N_c}_0)  \pi^{\cal M}_{N_c}\Phi^0 + W^{N_c}_0) 
\\ && \!\!\!\!\!\!\!\!\!\!\!\!\!\!\!\!\!\!\!\!\!\!\!\!\!\!\!\!\!\!\!\!\!\!\!\!\!\!\!\!\!\!\!\!\!\!\!\!\!\!\!\!\!\!\!\!
- a_{\Phi^0} ((\cS'(W^{N_c}_1) \cdot \delta W^{N_c}) \pi^{\cal M}_{N_c}\Phi^0,\delta W^{N_c})
\end{eqnarray*}
and
\begin{eqnarray*}
c_{\Phi^0}(\widetilde \Phi^0_{N_c},\widetilde \Phi^1_{N_c},W^{N_c}_0,W^{N_c}_1,\delta W^{N_c}) & = & \frac 14 \bigg[ R'(\widetilde\Phi^0_{N_c}-\Phi^0) \cdot ((\cS'(W^{N_c}_0) \cdot \delta W^{N_c}) \pi^{\cal M}_{N_c}\Phi^0 + \delta W^{N_c})\\
&& - R'(\widetilde\Phi^1_{N_c}-\Phi^0) \cdot ((\cS'(W^{N_c}_1) \cdot \delta W^{N_c}) \pi^{\cal M}_{N_c}\Phi^0 + \delta W^{N_c}) \bigg]. 
\end{eqnarray*}
Using Lemma~\ref{lem:properties_of_S} and (\ref{eq:majorationW0}), we obtain that there exists $\widetilde C_{\Phi^0}$ (depending only on $\Phi^0$) and $\widetilde N_c$ such that for all $N_c \ge \widetilde N_c$, 
\begin{eqnarray*}
| b_{\Phi^0}^{N_c}(W^{N_c}_0,W^{N_c}_1,\delta W^{N_c})| &\le & 
\widetilde C_{\Phi^0} \left(  \|\pi^{\cal M}_{N_c}\Phi^0-\Phi^0\|_{H^1_\#} + 
\|W^{N_c}_1\|_{L^2_\#} \right) \|\delta W^{N_c}\|_{H^1_\#}^2.
\end{eqnarray*}
On the other hand, remarking that for all $\Psi \in {\cal M}$ and all $\delta\Psi \in T_\Psi{\cal M}$,
$$
R'(\Psi-\Phi^0)\cdot\delta\Psi = {E^{\rm KS}}'(\Psi)\cdot\delta\Psi - 4 a_{\Phi^0}(\Psi-\Phi^0,\delta\Psi),
$$
and introducing the path $(\Psi(t))_{t \in [0,1]}$, drawn on the manifold ${\cal M}$ and connecting $\widetilde \Phi^0_{N_c}$ and $\widetilde \Phi^1_{N_c}$, defined as 
$$
\Psi(t) = \Phi^0 + \cS(tW^{N_c}_1+(1-t)W^{N_c}_0) \pi^{\cal M}_{N_c}\Phi^0 + tW^{N_c}_1+(1-t)W^{N_c}_0,
$$
we obtain
\begin{eqnarray*}
c_{\Phi^0}(\widetilde \Phi^0_{N_c},\widetilde \Phi^1_{N_c},W^{N_c}_0,W^{N_c}_1,\delta W^{N_c}) & = & \frac 14 \bigg[ {E^{\rm KS}}'(\Psi(0)) \cdot \Psi'(0) - {E^{\rm KS}}'(\Psi(1)) \cdot \Psi'(1) \bigg]  \\
& & - a_{\Phi^0}(\Psi(0)-\Phi^0,\Psi'(0)) + a_{\Phi^0}(\Psi(1)-\Phi^0,\Psi'(1)) \\
 & = & - \int_0^1 \bigg[ \frac 14  {E^{\rm KS}}''(\Psi(t))(\Psi'(t),\Psi'(t))  + \frac 14 {E^{\rm KS}}'(\Psi(t)) \cdot \Psi''(t)  \\
&& \qquad\quad - a_{\Phi^0}(\Psi'(t),\Psi'(t))- a_{\Phi^0}(\Psi(t)-\Phi^0,\Psi''(t)) \bigg] \; dt. 
\end{eqnarray*}
As $\Psi(t)=(\psi_1(t),\cdots,\psi_\cN(t)) \in {\cal M}$ for all $t \in [0,1]$, we have for all $1 \le i \le \cN$ and all $t \in [0,1]$,
$$
\int_{\Gamma} \psi_i'(t,x)^2 \, dx = - \int_\Gamma \psi_i(t,x) \, \psi_i''(t,x) \, dx, 
$$
so that
\begin{eqnarray*} \!\!\!\!\!\!\!\!
\frac 14 {E^{\rm KS}}'(\Phi^0)\cdot \Psi''(t)- a_{\Phi^0}(\Psi'(t),\Psi'(t)) &=& \sum_{i=1}^\cN \epsilon_i^0 \int_\Gamma \phi_i^0 \psi_i''(t) \\
&& - \frac 14 {E^{\rm KS}}''(\Phi^0)(\Psi'(t),\Psi'(t)) +  \sum_{i=1}^\cN \epsilon_i^0 \int_\Gamma \psi'_i(t)^2 \\
& = & -  \sum_{i=1}^\cN \epsilon_i^0 \int_\Gamma (\psi_i(t)-\phi_i^0) \psi_i''(t) - \frac 14 {E^{\rm KS}}''(\Phi^0)(\Psi'(t),\Psi'(t)) .
\end{eqnarray*}
Consequently,
\begin{eqnarray*}
\!\!\!\!\!\!\!\!\!\!\!\!
 && \!\!\!\!\!\!\!\!\!\!\!\!c_{\Phi^0}(\widetilde \Phi^0_{N_c},\widetilde \Phi^1_{N_c},W^{N_c}_0,W^{N_c}_1,\delta W^{N_c})
   \; = \;  - \int_0^1 \bigg[ \frac 14 \left( {E^{\rm KS}}''(\Psi(t))-{E^{\rm KS}}''(\Phi^0) \right) (\Psi'(t),\Psi'(t)) \\
&&\!\!\!\!\!\!\!\!\!\!\!\! \quad + \frac 14 \left( {E^{\rm KS}}'(\Psi(t)) -{E^{\rm KS}}'(\Phi^0) \right) \cdot \Psi''(t) -  \sum_{i=1}^\cN \epsilon_i^0 \int_\Gamma (\psi_i(t)-\phi_i^0) \psi_i''(t) - a_{\Phi^0}(\Psi(t)-\Phi^0,\Psi''(t)) \bigg] \; dt.
\end{eqnarray*}
Using Lemma~\ref{lem:estimR}, we obtain
\begin{eqnarray*}
|c_{\Phi^0}(\widetilde \Phi^0_{N_c},\widetilde \Phi^1_{N_c},W^{N_c}_0,W^{N_c}_1,\delta W^{N_c})| &\le & C \int_0^1 \bigg[ \left( \|\Psi(t)-\Phi^0\|_{H^1_\#}^\alpha 
+ \|\Psi(t)-\Phi^0\|_{H^1_\#}^2 \right)  \|\Psi'(t)\|_{H^1_\#}^2 \\
&& \qquad\qquad  \|\Psi(t)-\Phi^0\|_{H^1_\#}\|\Psi''(t)\|_{H^1_\#} \bigg] \; dt.
\end{eqnarray*}
As
\begin{eqnarray*}
\Psi'(t) &=&  (\cS'(tW^{N_c}_1+(1-t)W^{N_c}_0)\cdot\delta W^{N_c}) \pi^{\cal M}_{N_c}\Phi^0 + \delta W^{N_c}, \\
\Psi''(t) &=&  (\cS''(tW^{N_c}_1+(1-t)W^{N_c}_0)(\delta W^{N_c},\delta W^{N_c})) \pi^{\cal M}_{N_c}\Phi^0,
\end{eqnarray*}
we obtain that there exists some constant $C \in \R_+$ such that for $N_c$ large enough,
$$
|c_{\Phi^0}(\widetilde \Phi^0_{N_c},\widetilde \Phi^1_{N_c},W^{N_c}_0,W^{N_c}_1,\delta W^{N_c})| \le C \left( \|\pi^{\cal M}_{N_c}\Phi^0-\Phi^0\|_{H^1_\#}^\alpha + \|W^{N_c}_1\|_{H^1_\#}^\alpha \right) \|\delta W^{N_c} \|_{H^1_\#}^2.
$$
Thus,
\begin{eqnarray*}
\frac{c_{\Phi^0}}{2}  \|\delta W^{N_c}\|_{H^1_\#} & \le & |a_{\Phi^0}(\delta W^{N_c},\delta W^{N_c})| \\
& = & |b_{\Phi^0}^{N_c}(W^{N_c}_0,W^{N_c}_1,\delta W^{N_c}) + c_{\Phi^0}(\widetilde \Phi^0_{N_c},\widetilde \Phi^1_{N_c},W^{N_c}_0,W^{N_c}_1,\delta W^{N_c})| \\
& \le & C \left(  \|\pi^{\cal M}_{N_c}\Phi^0-\Phi^0\|_{H^1_\#}^\alpha + \|W^{N_c}_1\|_{H^1_\#}^\alpha \right) \|\delta W^{N_c} \|_{H^1_\#}^2.
\end{eqnarray*}
This proves that there exists a constant $r > 0$ such that for all $N_c$ large enough, $\|W^{N_c}_1\|_{H^1_\#} \le r$ implies $\delta W^{N_c}=0$. Hence the result.  
\end{proof}

As the mapping $B_{N_c}(2r_0) \ni W^{N_c} \mapsto \pi^{\cal
  M}_{N_c}\Phi^0 + \cS(W^N_c)\pi^{\cal M}_{N_c}\Phi^0 + W^{N_c}$ defines
a local map of $V_{N_c}^\cN \cap {\cal M}^{\pi^{\cal M}_{N_c}\Phi^0}$ in
the neighborhood of $\pi^{\cal M}_{N_c}\Phi^0$, we obtain that 
$\widetilde \Phi^0_{N_c} = \pi^{\cal M}_{N_c}\Phi^0 + \cS(W^{N_c}_0)
\pi^{\cal M}_{N_c}\Phi^0 + W^{N_c}_0$ is the unique local minimizer of 
$$
\inf \left\{ E^{\rm KS}(\Phi_{N_c}), \; 
\Phi_{N_c} \in V_{N_c}^\cN \cap {\cal M}^{\pi^{\cal M}_{N_c}\Phi^0} \right\},
$$
in the vicinity of $\pi^{\cal M}_{N_c}\Phi^0$. Besides,
\begin{eqnarray*}
\| \widetilde \Phi_{N_c}^0 - \Phi^0 \|_{H^1_\#} & \le &
\| \widetilde \Phi_{N_c}^0 - \pi^{\cal M}_{N_c}\Phi^0 \|_{H^1_\#} + 
\| \pi^{\cal M}_{N_c}\Phi^0 - \Phi^0 \|_{H^1_\#} \\
& \le & \|S(W^{N_c}_0) \pi^{\cal M}_{N_c}\Phi^0 + W^{N_c}_0\|_{H^1_\#} + 
\| \pi^{\cal M}_{N_c}\Phi^0 - \Phi^0 \|_{H^1_\#} \\
&\le & C \| \Pi_{N_c} \Phi^0-\Phi^0\|_{H^1_\#},
\end{eqnarray*}
for a constant $C$ independent of $N_c$. We then have
$$
\|M_{\widetilde \Phi_{N_c}^0,\Phi^0}-1_\cN \|_{\rm F} \le \|\widetilde
\Phi_{N_c}^0 - \Phi^0 \|_{L^2_\#} \le C \| \Pi_{N_c}
\Phi^0-\Phi^0\|_{H^1_\#}.
$$
Let $\Phi_{N_c}^0 = U_{\widetilde \Phi_{N_c}^0,\Phi^0}\widetilde
\Phi_{N_c}^0$, where  
$U_{\widetilde \Phi_{N_c}^0,\Phi^0} = 
M_{\widetilde\Phi_{N_c}^0,\Phi^0}^T
(M_{\widetilde\Phi_{N_c}^0,\Phi^0}M_{\widetilde\Phi_{N_c}^0,\Phi^0}^T)^{-1/2}$.
Then for each $N_c \ge N_{c,2r_0}'$,  
$\Phi_{N_c}^0$ is the unique local minimizer of~(\ref{eq:min_KS_Nc_U}) in the set
$$
\left\{\Phi_{N_c} \in V_{N_c}^\cN \cap {\cal M}^{\Phi^0} \; | \;  \|\Phi_{N_c}-\Phi^0\|_{H^1_\#} \le r^0 \right\},
$$ 
for some constant $r^0 > 0$ independent of $N_c$,  and it satisfies
\begin{equation} \label{eq:presetimH1}
\| \Phi_{N_c}^0 - \Phi^0 \|_{H^1_\#} \le C \| \Pi_{N_c}
\Phi^0-\Phi^0\|_{H^1_\#},
\end{equation}
for some $C \in \R_+$ independent of $N_c$. 

As $\Phi_{N_c}^0 \in {\cal
  M}^{\Phi^0}$, we can decompose $\Phi_{N_c}^0$ as
\begin{equation}\label{eq:decompdisc}
\Phi_{N_c}^0 = \Phi^0 + S^0_{N_c} \Phi_0 + W_{N_c}^0
\end{equation}
where $S^0_{N_c}=\cS(W_{N_c}^0)$ and $W_{N_c}^0 \in \Phi^{0,\perp\!\!\!\perp}$ (note that $W_{N_c}^0 \notin V_{N_c}^\cN$ in general). As 
\begin{equation} \label{eq:estimS0Nc}
\|S^0_{N_c}\|_{\rm F} \le \| W^0_{N_c}\|_{L^2_\#}^2
\end{equation}
and $\|\Phi_{N_c}^0 - \Phi^0\|_{H^1_\#}$ goes to zero when $N_c$ goes to infinity, we have, for $N_c$ large enough,
\begin{eqnarray}
&& \frac 12 \|W^0_{N_c}\|_{L^2_\#} \le \|\Phi_{N_c}^0 - \Phi^0\|_{L^2_\#} \le 
2 \|W^0_{N_c}\|_{L^2_\#}, \label{eq:estimW0NcL2} \\
&& \frac 12 \|W^0_{N_c}\|_{H^1_\#} \le \|\Phi_{N_c}^0 - \Phi^0\|_{H^1_\#} \le 
2 \|W^0_{N_c}\|_{H^1_\#}.  \label{eq:estimW0NcH1}
\end{eqnarray}
The discrete solution $\Phi_{N_c}^0$ satisfies the
Euler equations 
$$
\forall \Psi_{N_c} \in V_{N_c}^\cN, \quad 
\langle {\cal H}^{{\rm KS}}_{\rho^0_{N_c}} \phi_{i,N_c}^0,\psi_i
\rangle_{H^{-1}_\#,H^1_\#}  = \sum_{j=1}^\cN [\lambda_{N_c}^0]_{ij}
(\phi_{j,N_c}^0,\psi_j)_{L^2_\#},  
$$
where $\rho^0_{N_c}=\rho_{\Phi^0_{N_c}}$ and where the $\cN \times \cN$
matrix $\Lambda_{N_c}^0$ is symmetric (but generally not diagonal). Of
course, it follows from the invariance property
(\ref{eq:invariance_property}) that (\ref{eq:min_KS_Nc_U}) has a local
minimizer of the form $U\Phi_{N_c}^0$ with $U \in \cU(\cN)$ for which
the Lagrange multiplier of the orthonormality constraints is a diagonal
matrix.

\subsection{A priori error estimates}

We are now in position to derive {\it a priori} estimates for $\|\Phi^0_{N_c}-\Phi^0\|_{H^s_\#}$ and $(\Lambda_{N_c}^0-\Lambda^0)$, where we recall that
$\Lambda_0=\mbox{diag}(\epsilon_1^0,\cdots,\epsilon_\cN^0)$. 

Using (\ref{eq:app-Fourier}), (\ref{eq:presetimH1}) and the inverse inequality (\ref{eq:inv_ineq}), we obtain for each $s \ge 1$ such that $\Phi^0 \in \left(H^s_\#(\Gamma)\right)^\cN$ and each $1 \le r \le s$, 
\begin{eqnarray} 
\|\Phi^0_{N_c}-\Phi^0\|_{H^r_\#} &\le & \|\Phi^0_{N_c}-\Pi_{N_c} \Phi^0\|_{H^r_\#}+\|\Pi_{N_c}\Phi^0-\Phi^0\|_{H^r_\#} \nonumber \\
&\le & C N_c^{r-1} \|\Phi^0_{N_c}-\Pi_{N_c} \Phi^0\|_{H^1_\#}+\| \Pi_{N_c}\Phi^0-\Phi^0\|_{H^r_\#} \nonumber \\
&\le & C N_c^{r-1} \left( \|\Phi^0_{N_c}- \Phi^0 \|_{H^1_\#} + \|\Phi^0-\Pi_{N_c} \Phi^0\|_{H^1_\#} \right) + \| \Pi_{N_c}\Phi^0-\Phi^0\|_{H^r_\#}\nonumber  \\
&\le & C N_c^{r-1} \|\Pi_{N_c} \Phi^0 - \Phi^0\|_{H^1_\#} + \| \Pi_{N_c}\Phi^0-\Phi^0\|_{H^r_\#} \nonumber \\ 
& \le &  C N_c^{-(s-r)} \|\Pi_{N_c} \Phi^0 -\Phi^0\|_{H^s_\#}. \label{eq:1estimKS}
\end{eqnarray}
In particular, for $s=3$ and $r=2$, we obtain that  $\Phi^0_{N_c}$ converges to $\Phi^0$ in $(H^2_\#(\Gamma))^\cN$, hence in  $(L^\infty_\#(\Gamma))^\cN$.

We then proceed as in (\ref{eq:val_ppe}) and remark that
\begin{eqnarray} \!\!\!\!\!\!\!\!\!\!\!\!\!\!\!\!\!\!
\lambda_{ij,N_c}^0-\lambda_{ij}^0 & = &
\langle {\cal H}^{{\rm KS}}_{\rho^0_{N_c}}\phi_{i,N_c}^0,\phi_{j,N_c}^0
\rangle_{H^{-1}_\#,H^1_\#} - \langle {\cal H}^{{\rm KS}}_{\rho^0}\phi_i^0,\phi_j^0 \rangle_{H^{-1}_\#,H^1_\#}
  \nonumber \\
&=&  \langle {\cal H}^{{\rm KS}}_{\rho^0} (\phi_{i,N_c}^0-\phi_i^0), (\phi_{i,N_c}^0-\phi_j^0)   \rangle_{H^{-1}_\#,H^1_\#} \nonumber\\
&&+ \epsilon_i^0\int_\Gamma\phi_i^0(\phi_{j,N_c}^0 - \phi_{j}^0) + \epsilon_j^0\int_\Gamma\phi_j^0(\phi_{i,N_c}^0 - \phi_{i}^0\nonumber)\\
&& + \int_\Gamma  V^{\rm Coulomb}_{\phi_{i,N_c}^0\phi_{j,N_c}^0} (\rho_{N_c}^0 - \rho^0 )     \nonumber \\ &&
+ \int_\Gamma \bigg( \frac{de_{\rm xc}^{\rm LDA}}{d\rho}(\rho_{\rm c}+\rho^0_{N_c}) -\frac{de_{\rm xc}^{\rm LDA}}{d\rho}(\rho_{\rm c}+\rho^0) \bigg)
\phi_{i,N_c}^0\phi_{j,N_c}^0. \label{2K10}
\end{eqnarray}
As, from (\ref{eq:decompdisc}),
$$
\epsilon_i^0\int_\Gamma\phi_i^0(\phi_{j,N_c}^0-\phi_j^0) +
\epsilon_j^0\int_\Gamma\phi_j^0(\phi_{i,N_c}^0-\phi_i^0) = 
(\epsilon_i^0+\epsilon_j^0)[S_{N_c}^0]_{ij},
$$
we easily obtain, using the convergence of  $\Phi^0_{N_c}$ to $\Phi^0$ in $(H^1_\#(\Gamma)\cap L^\infty_\#(\Gamma))^\cN$, 
\begin{equation} \label{eq:1estimLambda}
\| \Lambda^0_{N_c} - \Lambda^0\|_{\rm F} \mathop{\longrightarrow}_{N_c \to \infty} 0.
\end{equation}
For $W \in (L^2_\#(\Gamma))^\cN$, we introduce the adjoint problem
\begin{equation}\label{eq:adjoint_KS}
\left\{ \begin{array}{l}
\mbox{find $\Psi_W \in \Phi^{0,\perp\!\!\!\perp}$ such that} \\
\forall Z \in \Phi^{0,\perp\!\!\!\perp}, \; a_\Phi(\Psi_W,Z) = (W,Z)_{L^2_\#},
\end{array}\right.
\end{equation}
the solution of whom exists and is unique by the coercivity assumption (\ref{eq:coerc}). Clearly,
\begin{equation} \label{eq:estim_Psi}
\| \Psi_W \|_{H^1_\#} \le C \| W \|_{L^2_\#}.
\end{equation}
In addition, it follows from standard elliptic regularity arguments that
$$
\| \Psi_W \|_{H^2_\#} \le C \| W \|_{L^2_\#},
$$
yielding
\begin{eqnarray} 
&& \| \Psi_W - \Pi_{N_c}\Psi_W  \|_{L^2_\#} \le  CN_c^{-2}
\|W \|_{L^2_\#} \label{eq:estim_Psi_deltaL2} \\
&& \| \Psi_W - \Pi_{N_c}\Psi_W  \|_{H^1_\#} \le CN_c^{-1}
\|W \|_{L^2_\#}. \label{eq:estim_Psi_delta}
\end{eqnarray}
Denoting by $\Psi = \Psi_{\Phi^0_{N_c} - \Phi^0}$ and  using (\ref{eq:decompdisc}), we get
\begin{eqnarray}
\| \Phi^0_{N_c} - \Phi^0 \|_{L^2_\#}^2 & = & (\Phi^0_{N_c} - \Phi^0,\Phi^0_{N_c} -
\Phi^0)_{L^2_\#} \nonumber\\
& = & (\Phi^0_{N_c} - \Phi^0,S^0_{N_c} \Phi^0)_{L^2_\#} + (\Phi^0_{N_c} -
\Phi^0,W^0_{N_c})_{L^2_\#} \nonumber\\
& = & (\Phi^0_{N_c} - \Phi^0,S^0_{N_c} \Phi^0)_{L^2_\#} +
a_{\Phi^0}(\Psi,W^0_{N_c}) \nonumber\\
& = & (\Phi^0_{N_c} - \Phi^0,S^0_{N_c} \Phi^0)_{L^2_\#} - 
a_{\Phi^0}(\Psi,S^0_{N_c} \Phi^0) +
a_{\Phi^0}(\Psi,\Phi^0_{N_c}-\Phi^0)  \nonumber \\
 & = & (\Phi^0_{N_c} - \Phi^0,S^0_{N_c} \Phi^0)_{L^2_\#}  - 
a_{\Phi^0}(\Psi,S^0_{N_c} \Phi^0) +
 a_{\Phi^0}(\Psi - \Pi_{N_c}\Psi,\Phi^0_{N_c}-\Phi^0)   \nonumber \\
 && + a_{\Phi^0}(\Pi_{N_c}\Psi,\Phi^0_{N_c}-\Phi^0). \label{3K11}
\end{eqnarray}
From the definition (\ref{eq:defaKS}), the last term in the above expression reads
$$
a_{\Phi^0}(\Pi_{N_c}\Psi,\Phi^0_{N_c}-\Phi^0) = \frac{1}{4} {E^{\rm KS}}''(\Phi^0)(\Pi_{N_c}\Psi,\Phi^0_{N_c}-\Phi^0) - \sum_{i=1}^N  \sum_{j=1}^N \lambda_{ij}^0
\int_\Gamma  (\phi^0_{j,N_c}-\phi^0_{j})\Pi_{N_c}\psi_i, 
$$
so that from the definition of the continuous and discrete eigenvalue problems
\begin{eqnarray}
4a_{\Phi^0}(\Pi_{N_c}\Psi,\Phi^0_{N_c}-\Phi^0) &=&{E^{\rm KS}}''(\Phi^0)(\Pi_{N_c}\Psi,\Phi^0_{N_c}-\Phi^0) - {E^{\rm KS}}'(\Phi^0_{N_c})(\Pi_{N_c}\Psi) +{E^{\rm KS}}'(\Phi^0)(\Pi_{N_c}\Psi)\nonumber \\
&&+ 4 \sum_{i=1}^N  \sum_{j=1}^N( \lambda_{ij,N_c}^0 - \lambda_{ij}^0)  \int_\Gamma  \phi^0_{j,N_c}\Pi_{N_c}\psi_i.
\end{eqnarray}
The definition of $\Pi_{N_c}$ and the fact that $\Psi \in
\Phi^{0,\perp\!\!\!\perp}$ yields
$$ 
\int_\Gamma \phi^0_{j,N_c}\Pi_{N_c} \psi_i   = \int_\Gamma  (\phi^0_{j,N_c} -\phi^0_j) \psi_i, 
$$ 
which  finally provides the estimate
\begin{eqnarray}
\| \Phi^0_{N_c} - \Phi^0 \|_{L^2_\#}^2 & = &(\Phi^0_{N_c} - \Phi^0,S^0_{N_c} \Phi^0)_{L^2_\#} -
a_{\Phi^0}(\Psi,S^0_{N_c} \Phi^0)  +
a_{\Phi^0}(\Psi - \Pi_{N_c}\Psi,\Phi^0_{N_c}-\Phi^0)  \nonumber\\
&&-\frac 14 \left({E^{\rm KS}}'(\Phi^0_{N_c})(\Pi_{N_c}\Psi) - {E^{\rm KS}}'(\Phi^0)(\Pi_{N_c}\Psi) -{E^{\rm KS}}''(\Phi^0)(\Phi^0_{N_c}-\Phi^0,\Pi_{N_c}\Psi) \right)\nonumber \\
&&+ \sum_{i=1}^N  \sum_{j=1}^N( \lambda_{ij,N_c}^0 - \lambda_{ij}^0)  \int_\Gamma (\phi^0_{j,N_c} -\phi^0_j) \psi_i .
\end{eqnarray}
Using Lemma~\ref{lem:estimR}, (\ref{eq:presetimH1}), (\ref{eq:estimS0Nc}) and (\ref{eq:estim_Psi_delta}), we infer
\begin{eqnarray}
\| \Phi^0_{N_c} - \Phi^0 \|_{L^2_\#} & \le & C \bigg(
\| \Phi^0_{N_c} - \Phi^0 \|_{L^2_\#}^2 + N_{ c}^{-1} \| \Phi^0_{N_c} - \Phi^0
\|_{H^1_\#} + \| \Phi^0_{N_c} - \Phi^0 \|_{L^2_\#}^{1+\alpha}
\| \Phi^0_{N_c} - \Phi^0 \|_{H^1_\#}  \nonumber \\ & & \qquad
+   \|\Lambda^0_{N_c} - \Lambda^0\|_{\rm F} \| \Phi^0_{N_c} - \Phi^0 \|_{L^2_\#}  \bigg). 
\end{eqnarray}
We thus obtain, using (\ref{eq:1estimLambda}) and   the above estimate, 
that asymptotically, when $N_{c}$ goes to infinity,
$$
\| \Phi^0_{N_c} - \Phi^0 \|_{L^2_\#} \le C \, N_{c}^{-1} \|
\Pi_{N_c} \Phi^0 - \Phi^0 \|_{H^1_\#}.
$$
Reasoning as in (\ref{eq:1estimKS}), we obtain that for each $s \ge 1$ such that $\Phi^0 \in \left(H^s_\#(\Gamma)\right)^\cN$ and each $0 \le r \le s$, there exists a constant $C$ such that
\begin{equation} \label{eq:estimHpos}
\| \Phi^0_{N_c} - \Phi^0 \|_{H^r_\#} \le C \, N_{c}^{-(s-r)} \|
\Pi_{N_c} \Phi^0 - \Phi^0 \|_{H^s_\#}.
\end{equation}
To proceed further, we need to make an assumption on the regularity of the exchange-correlation potential. In the sequel, we assume that
\begin{itemize}
\item either the function $\rho \mapsto e_{\rm xc}^{\rm LDA}(\rho)$ is in $C^{[m]}([0,+\infty))$;
\item or the function $\rho_c + \rho^0$ is positive everywhere. As it is continuous on $\R^3$, this is equivalent to assuming that there exists a constant $\eta > 0$ such that for all $x \in \R^3$,  $\rho_c(x) + \rho^0(x) \ge \eta$. 
\end{itemize}
It follows by standard elliptic regularity arguments that $\Phi^0$ then is in $(H^{m+1/2-\epsilon}_\#(\Gamma))^\cN$ for any $\epsilon > 0$, and we deduce from (\ref{eq:estimHpos}) that (\ref{KSHS}) holds true for all $0 \le s < m+1/2$. 

Then, following the same lines as in the proof of~(\ref{eq:pre_estim_lambda}),
we obtain the estimates
\begin{equation*}
\left| \int_\Gamma  V^{\rm Coulomb}_{\phi_{i,N_c}^0\phi_{j,N_c}^0} ( \rho^0_{N_c} - \rho^0 ) \right|  \le C  \| \rho^0_{N_c}-\rho^0 \|_{H^{-r}_\#},
\end{equation*}
and
\begin{eqnarray*}
\left|\int_\Gamma \bigg( \frac{de_{\rm xc}^{\rm LDA}}{d\rho}(\rho_{\rm c}+\rho^0) -
\frac{de_{\rm xc}^{\rm LDA}}{d\rho}(\rho_{\rm c}+\rho^0_{N_c}) \bigg)
\phi_{i,N_c}^0\phi_{j,N_c}^0 \right|   \le c \|\rho^0_{N_c}- \rho^0 \|_{H^{-r}_\#},
\end{eqnarray*}
valid for all $0\le r < m-3/2$. Using these estimates in (\ref{2K10}), we are lead to 
$$
|\lambda_{ij,N_c}^0-\lambda_{ij}^0|  \le  C \left(
\| \Phi^0 - \Phi^0_{N_c} \|_{H^1_\#}^2 +   \| \rho^0_{N_c} -  \rho^0 \|_{H^{-r}_\#}  \right).
$$
Now,
$$ 
\| \rho^0_{N_c}-\rho^0 \|_{H^{-r}_\#} = \sup_{w\in H^r_\#(\Gamma)}\frac {\dps\int_\Gamma  (\rho^0_{N_c}-\rho^0 )  w}{\|w\|_{H^r_\#}}.
$$
Noticing that
$$
\rho^0_{N_c}-\rho^0 =\sum_{i=1}^\cN |\phi^0_{i,N_c}|^2 -  \sum_{i=1}^\cN |\phi_{i}^0|^2 = \sum_{i=1}^\cN (\phi_{i,N_c}^0 - \phi_{i}^0) (\phi_{i,N_c}^0 +\phi_{i}^0), 
$$
we deduce
\begin{equation}
\label{3K10}
\| \rho^0_{N_c}-\rho^0 \|_{H^{-r}_\#} \le C  \| \Phi_{N_c}^0 -  \Phi^0 \|_{H^{-r}_\#},
\end{equation}
since $\Phi^0_{N_c}$ converges, therefore is uniformly bounded in $H^r_\#(\Gamma)$. Thus
\begin{equation} \label{eq:estim_Lambda}
|\Lambda^0_{N_c}-\Lambda^0|  \le  C \left(
\| \Phi^0_{N_c} - \Phi^0 \|_{H^1_\#}^2 + C \| \Phi^0_{N_c} -  \Phi^0 \|_{H^{-r}_\#}  \right).
\end{equation}
The derivation of estimates for $\| \Phi^0_{N_c} -  \Phi^0 \|_{H^{-r}_\#}$ follows exactly the same lines as the derivation of the $L^2$ estimate: starting from the definition
$$
\| \Phi^0_{N_c} -  \Phi^0   \|_{H^{-r}_\#} = \sup_{W \in (H^r_\#(\Gamma))^\cN}
\frac{(W,\Phi^0_{N_c} -  \Phi^0)_{L^2_\#} }{\|W\|_{H^r_\#}},
$$
and remarking that the solution $\Psi_W$ to the adjoint problem (\ref{eq:adjoint_KS}) satisfies 
$$
\|\Psi_W\|_{H^{r+2}_\#} \le C \| W\|_{H^r_\#},
$$
we proceed as in (\ref{3K11}) to get
\begin{eqnarray}
(W,\Phi^0_{N_c} -  \Phi^0)_{L^2_\#} 
& = & (W,S^0_{N_c} \Phi^0)_{L^2_\#} + (W,W^0_{N_c})_{L^2_\#} \nonumber\\
& = & (W,S^0_{N_c} \Phi^0)_{L^2_\#} +
a_{\Phi^0}(\Psi_W,W^0_{N_c}) \nonumber\\
& = & (W,S^0_{N_c} \Phi^0)_{L^2_\#} -
a_{\Phi^0}(\Psi_W,S^0_{N_c} \Phi^0) +
a_{\Phi^0}(\Psi_W,\Phi^0_{N_c}-\Phi^0)  \nonumber\\
& = & (W,S^0_{N_c} \Phi^0)_{L^2_\#} -
a_{\Phi^0}(\Psi_W,S^0_{N_c} \Phi^0) +
a_{\Phi^0}(\Psi_W - \Pi_{N_c}\Psi_W,\Phi^0_{N_c}-\Phi^0)  \nonumber\\
&&+a_{\Phi^0}(\Pi_{N_c}\Psi_W,\Phi^0_{N_c}-\Phi^0),
\end{eqnarray}
that yields
\begin{eqnarray}\| \Phi^0_{N_c} - \Phi^0 \|_{H^{-r}_\#} &\le& C \bigg(
\| \Phi^0_{N_c} - \Phi^0 \|_{L^2_\#}^2 + N_{ c}^{-1-r} \| \Phi^0_{N_c} - \Phi^0
\|_{H^1_\#} + \| \Phi^0_{N_c} - \Phi^0 \|_{H^{-r}_\#}
\| \Phi^0_{N_c} - \Phi^0 \|_{H^1_\#} \nonumber \\ & & \qquad
+   \|\Lambda^0_{N_c} - \Lambda^0\|_{\rm F} \| \Phi^0_{N_c} - \Phi^0 \|_{H^{-r}_\#}  \bigg).
\end{eqnarray}
The proof of (\ref{KSHS}) follows and then we get easily from (\ref{eq:estim_Lambda}) that
\begin{equation} \label{eq:estim_Lambdafinale}
\|\Lambda^0_{N_c}-\Lambda^0\|_{\rm F}  \le  C_\epsilon N_c^{-(2m-1-\epsilon)}.
\end{equation}
Hence (\ref{KSHS'}). Finally, (\ref{KSHS''}) is a straightforward consequence of Lemma~\ref{lem:estim_2}, (\ref{eq:coerc}), (\ref{eq:continuity_aPhi0}), and (\ref{eq:presetimH1}).

\subsection{Numerical results} 
In order to evaluate the quality of the error bounds obtained in Theorem~\ref{Th:KS-LDA}, we have performed numerical tests using the Abinit software~\cite{Abinit1} (freely available online, cf. http://www.abinit.org),  
whose main program allows one to find the total energy, charge density and electronic structure of systems (molecules and periodic solids) within Density Functional Theory (DFT), using pseudopotentials and a planewave basis. \\

We have run simulation tests with the Hartree functional (i.e. with $e^{\rm LDA}_{\rm xc}=0$), for which there is no numerical integration error. In this particular case, the problems (\ref{eq:min_KS_PW}) (solved by Abinit) and (\ref{eq:min_KS_Nc}) (analyzed in Theorem~\ref{Th:KS-LDA}) are identical. 

For Troullier-Martins pseudopotentials, the parameter $m$ in Theorem~\ref{Th:KS-LDA} is equal to $5$. Therefore, we expect the following error bounds (as functions of the cut-off energy $E_{\rm c} = \frac 12 \left( \frac{2\pi N_c}{L} \right)^{2}$) 
\begin{eqnarray} \label{estimationResNum1}
\| \Phi_{N_c}^0-\Phi^0\|_{H^1_\#} & \le & C_{1,\epsilon}
E_{\rm c}^{-2.25+\epsilon}, \\
\| \Phi_{N_c}^0-\Phi^0\|_{L^2_\#} & \le & C_{2,\epsilon}E_{\rm c}^{-2.75+\epsilon}, \\
|\epsilon_{i,N_c}^0-\epsilon_i^0| & \le & C_{3,\epsilon} E_{\rm c}^{-4.5+\epsilon}, \\
0 \le I^{\rm KS}_{N_c}-I^{\rm KS} & \le & C_{4,\epsilon} E_{\rm c}^{-4.5+\epsilon}.
\label{estimationResNum2}
\end{eqnarray} 

The first tests were performed with the Hydrogen molecule (H$_2$). The nuclei were clamped at the points with cartesian coordinates $r_1 = ( -0.7 ; 0; 0)$ and $ r_2 = ( 0.7 ; 0; 0)$ (in Bohrs). The simulation cell was a cube of side length $L=10$~Bohrs. The so-obtained numerical errors are plotted in log-scales in Figures \ref{FigH2a} and \ref{FigH2b}. The second series of tests were performed with the Nitrogen molecule (N$_2$). The nuclei were clamped at positions  $r_1 = ( -0.55  ; 0; 0)$ and $ r_2 = ( 0.55 ; 0; 0)$ (in Angstroms), and the simulation cell was a cube of side length $L=6$~Angstroms. The numerical errors for N$_2$ are plotted in Figures~\ref{FigN2a}, \ref{FigN2b} and~\ref{FigN2}. The  reference values for $\Phi^0$, $\epsilon_i^0$ and $I^{\rm KS}$ for both H$_2$ and N$_2$ are those obtained for a cut-off energy equal to $500$~Hartrees.

\begin{figure}[h!] 
\begin{tabular}{c  c }
\includegraphics[width=7cm]{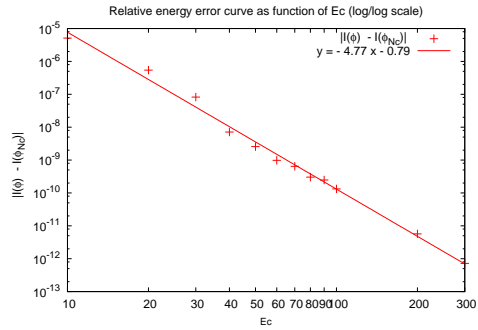}&
\end{tabular}	 
\caption{Error on the energy as a function of $E_{\rm c}$  for H$_2$}\label{FigH2a}
 \end{figure}
\begin{figure}[h!] 
\begin{tabular}{c  c }
\includegraphics[width=7cm]{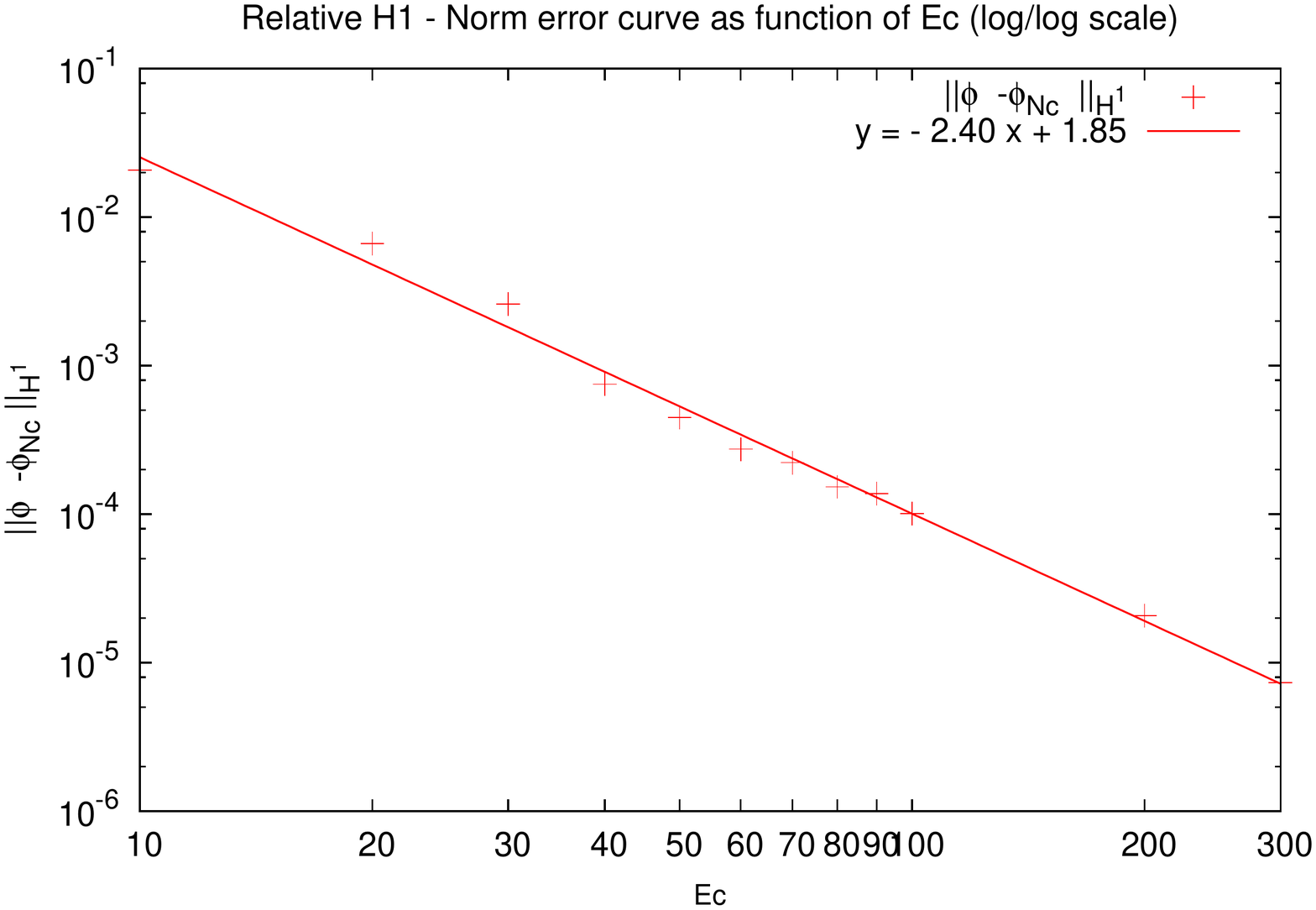}&
\includegraphics[width=7cm]{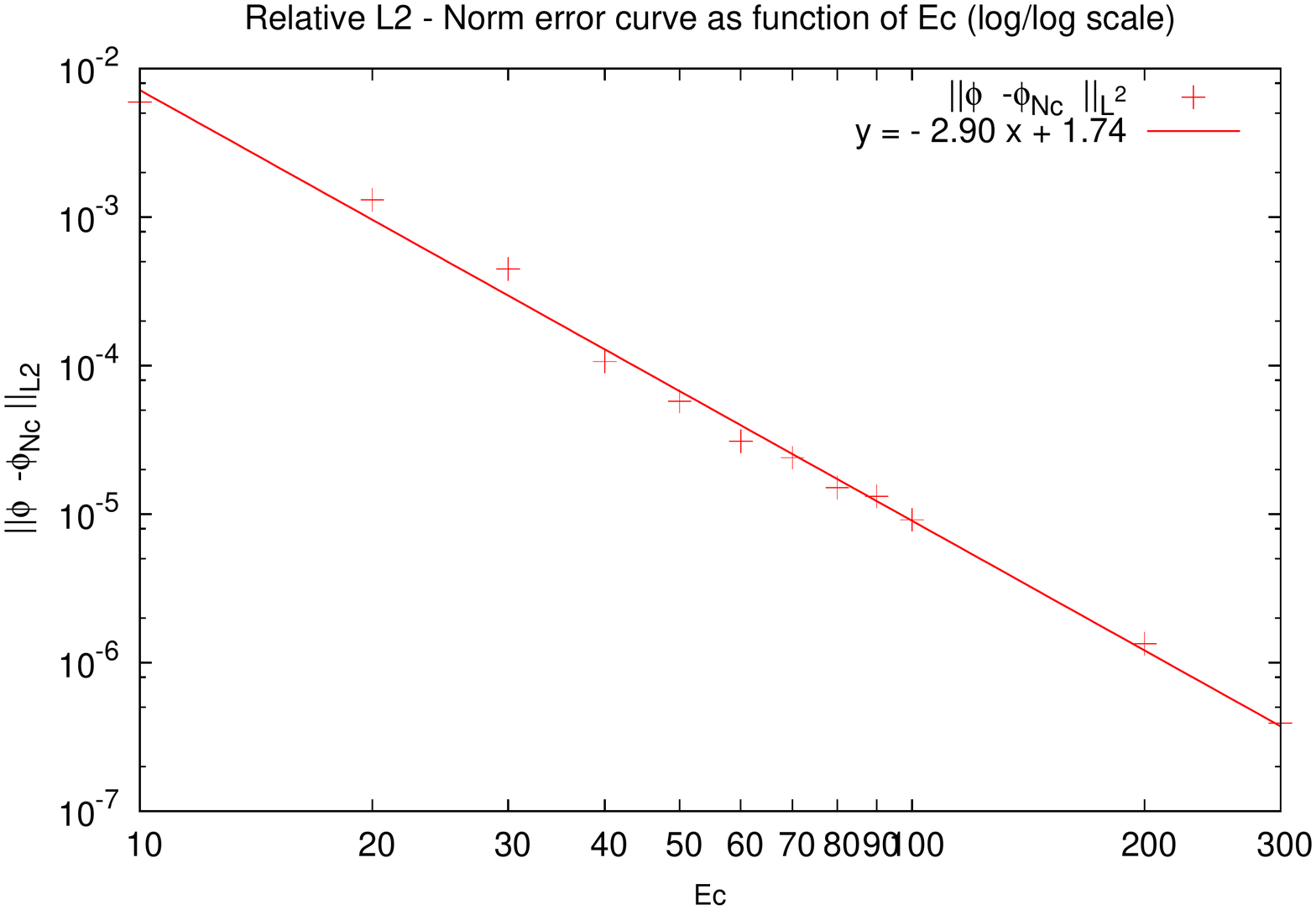}\\
\includegraphics[width=7cm]{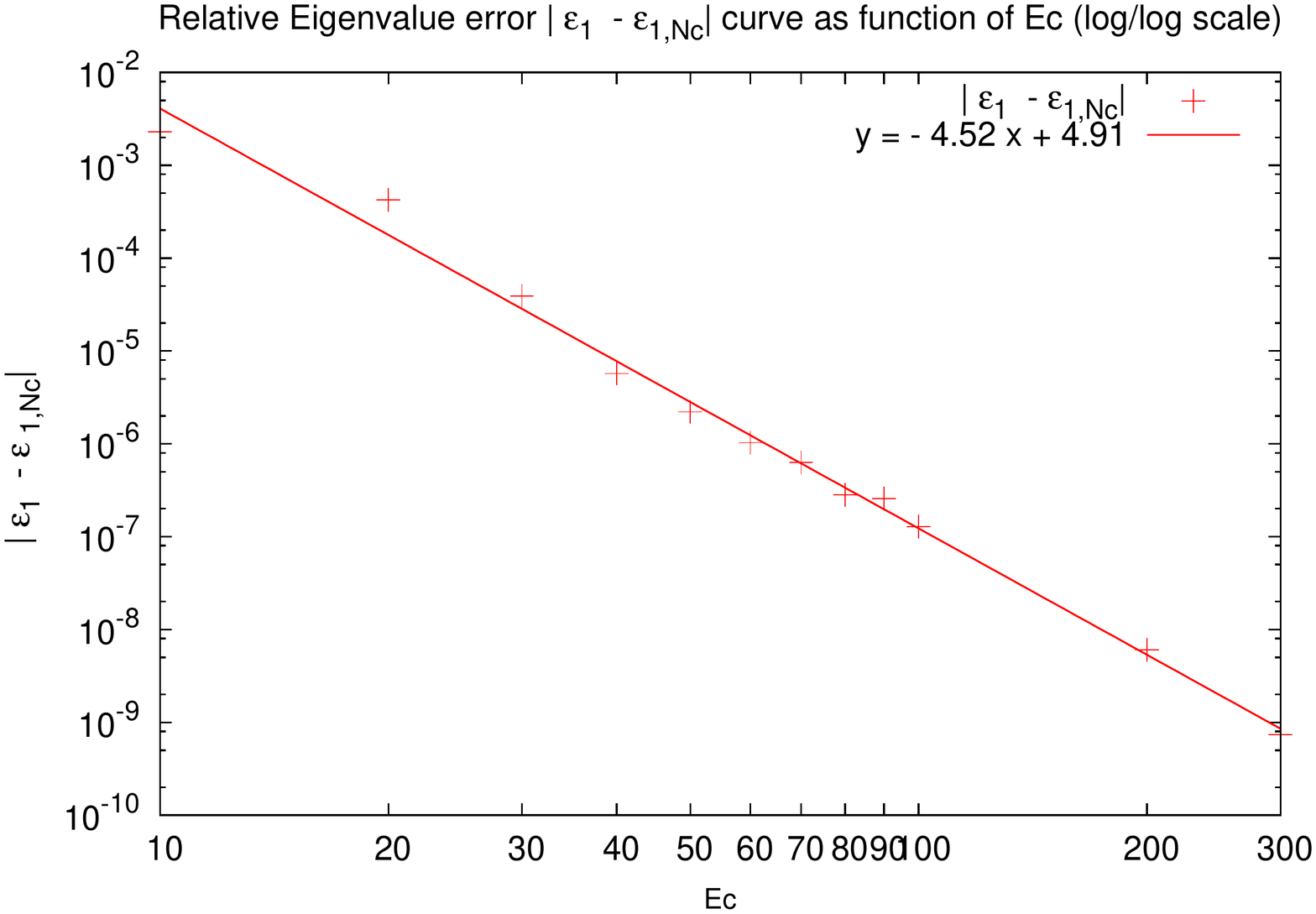}&
\includegraphics[width=7cm]{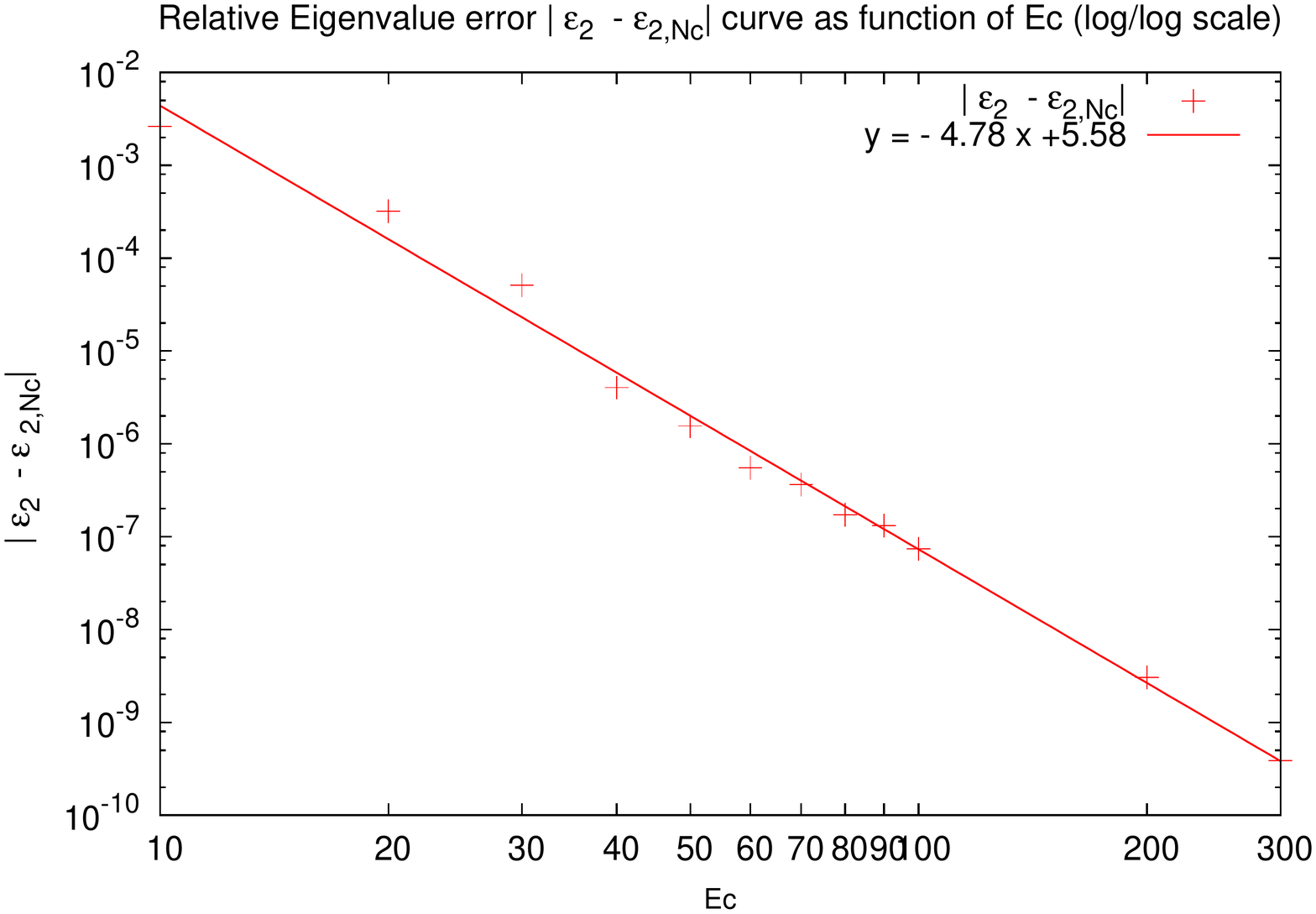}\\
\end{tabular}	 
\caption{Errors on $\| \Phi_{N_c}^0-\Phi^0\|_{H^1_\#}$ (left) and
$\| \Phi_{N_c}^0-\Phi^0\|_{L^2_\#}$ (right) and $|\epsilon_{i,N_c}^0-\epsilon_i^0| $ (bottom) as functions of $E_{\rm c}$ for H$_2$}\label{FigH2b}
 \end{figure}
\newpage

\begin{figure}[h!] 
\begin{tabular}{c  c }
\includegraphics[width=7cm]{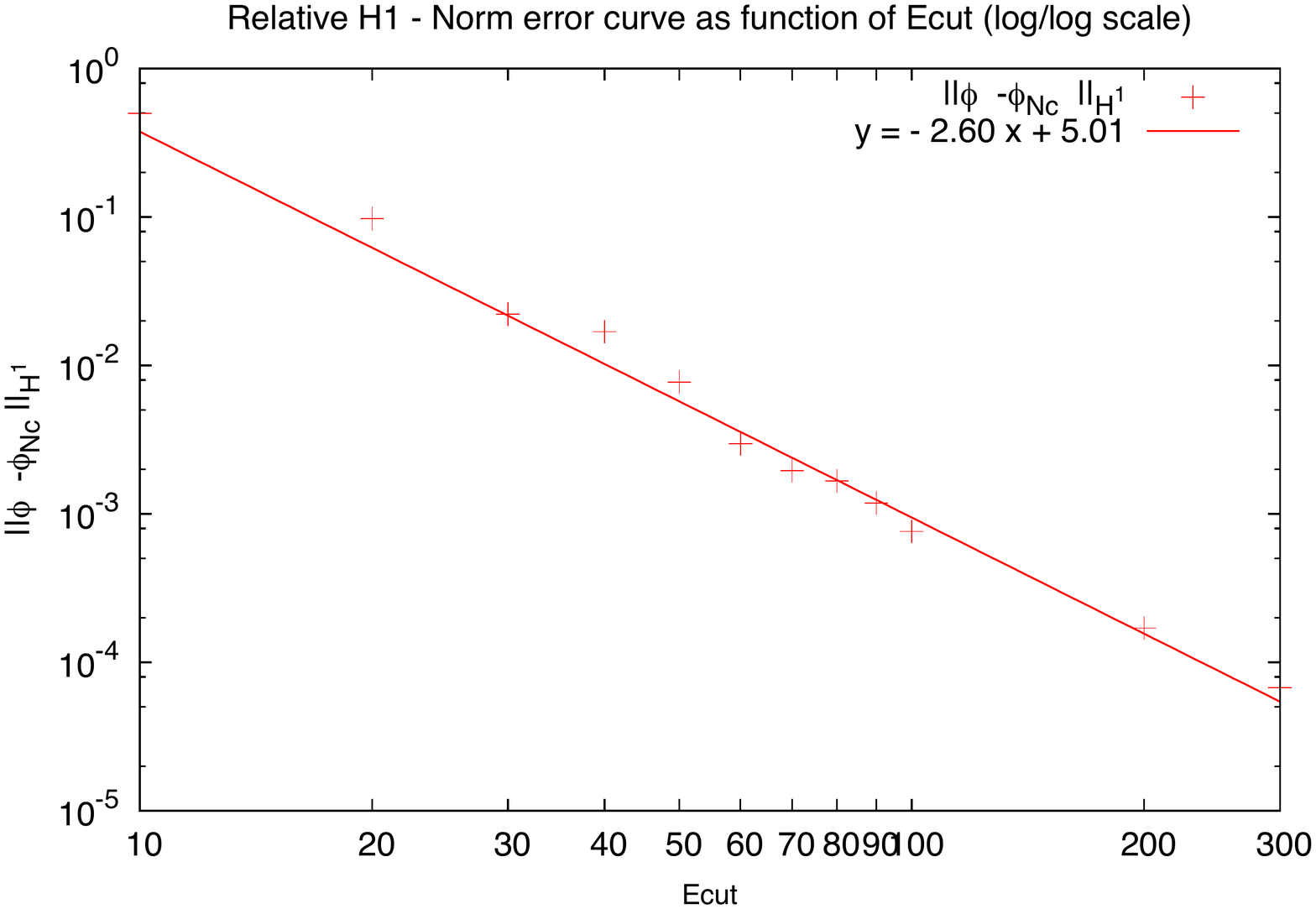}&
\includegraphics[width=7cm]{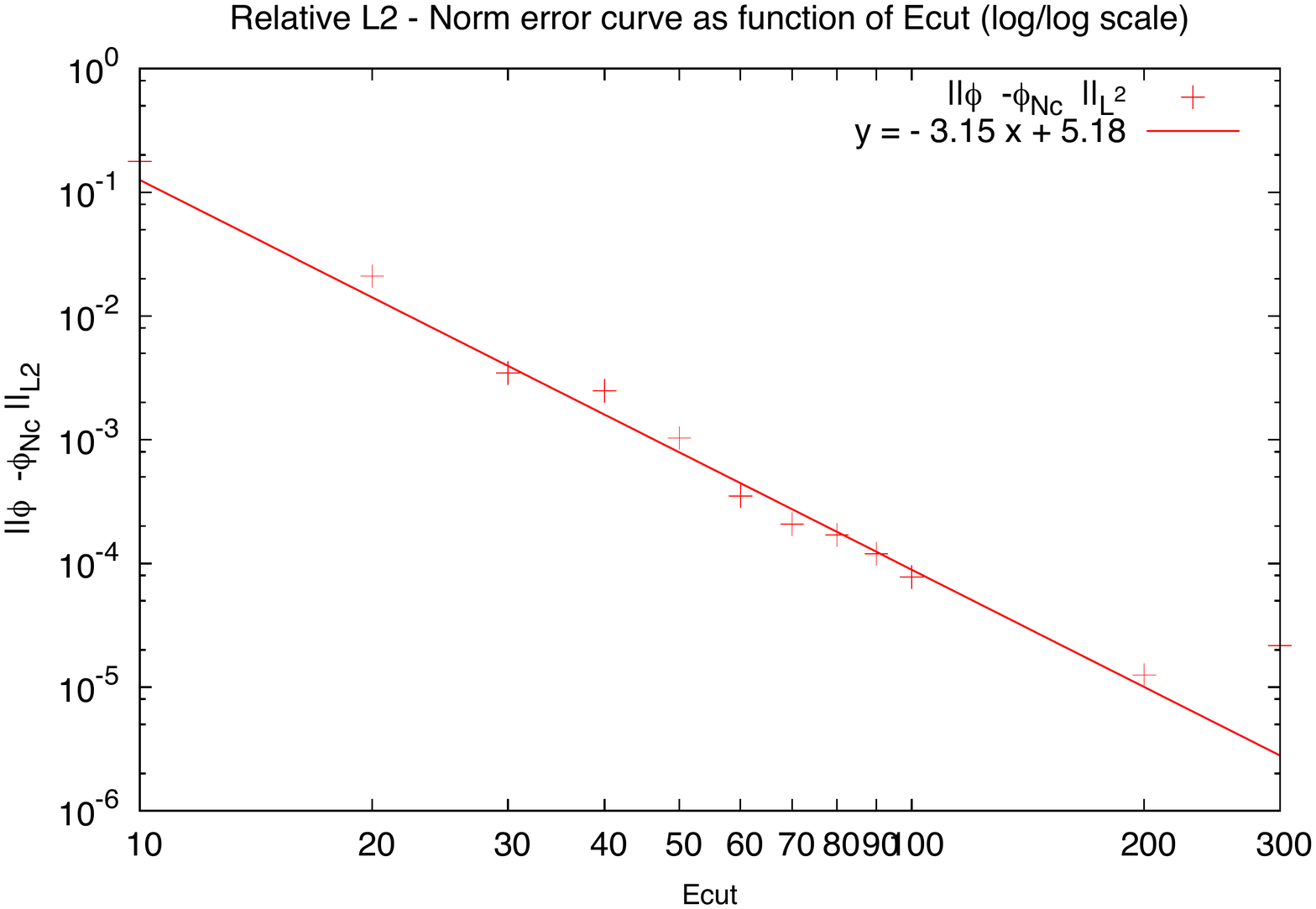}
\end{tabular}	 
\caption{Errors on $\| \Phi_{N_c}^0-\Phi^0\|_{H^1_\#}$ (left) and
$\| \Phi_{N_c}^0-\Phi^0\|_{L^2_\#}$ (right) as functions of $E_{\rm c}$ for N$_2$}\label{FigN2a}
 \end{figure}

\begin{figure}[h!] 
\begin{tabular}{c  c }
\includegraphics[width=7cm]{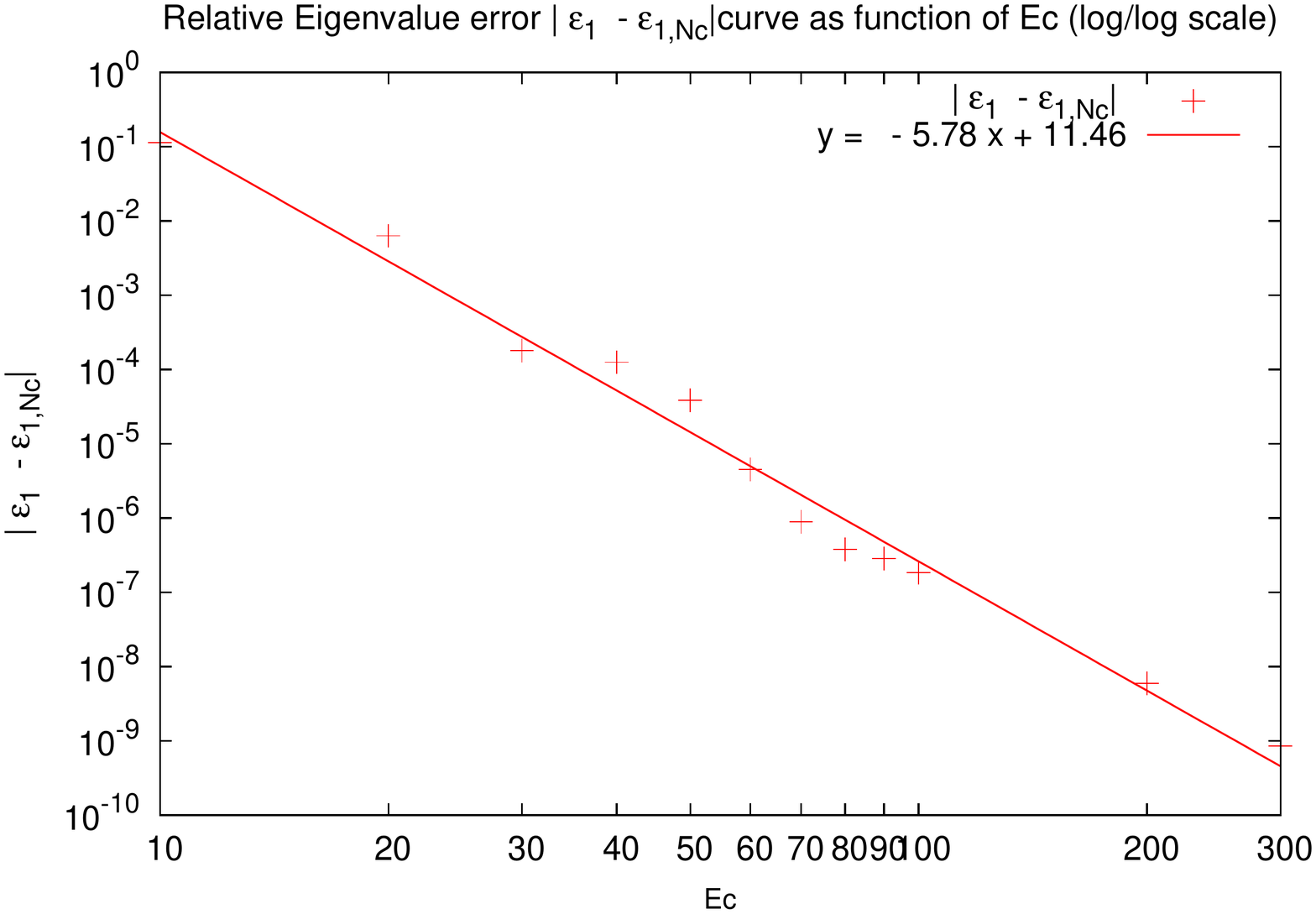}&
\includegraphics[width=7cm]{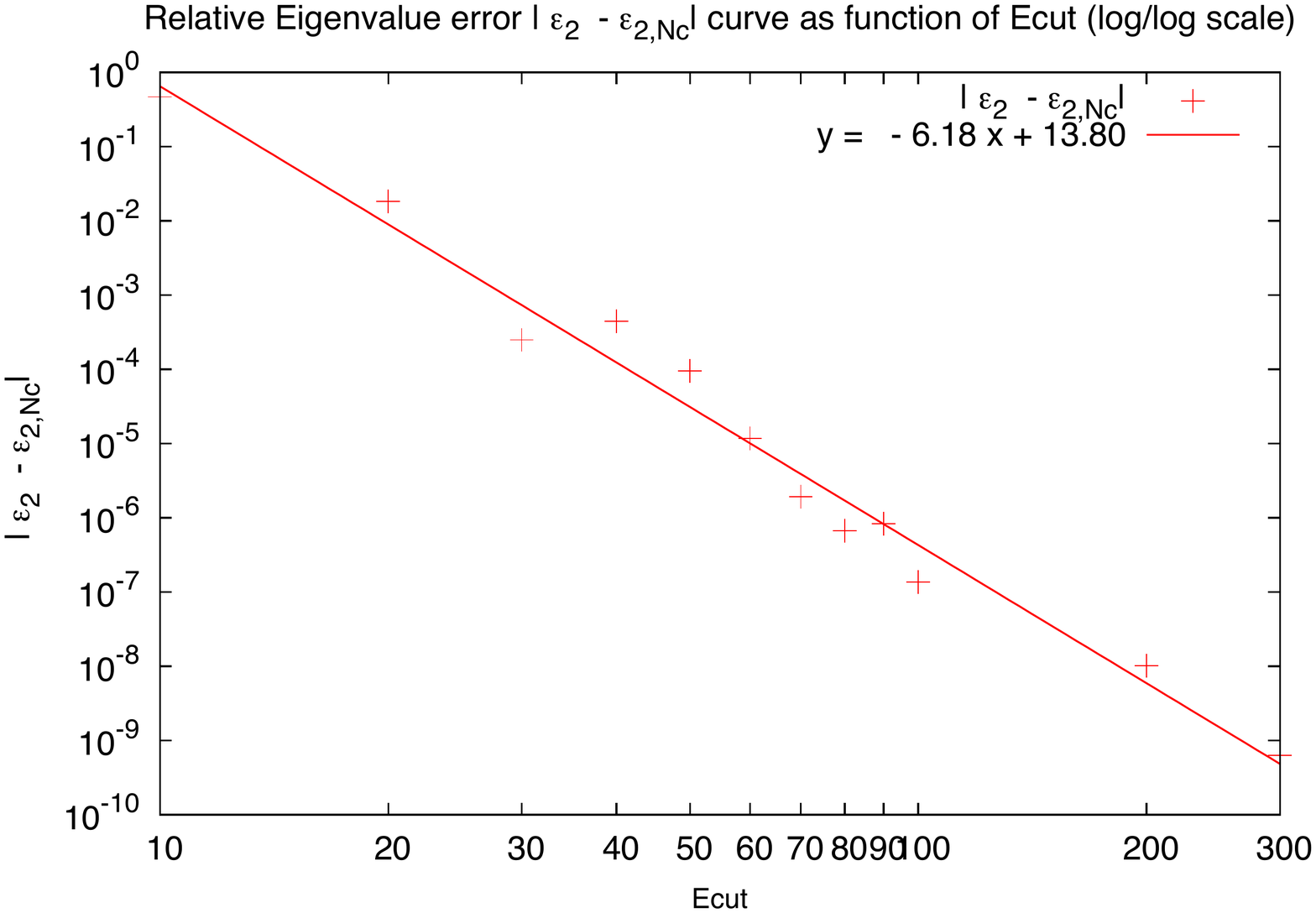}\\
\includegraphics[width=7cm]{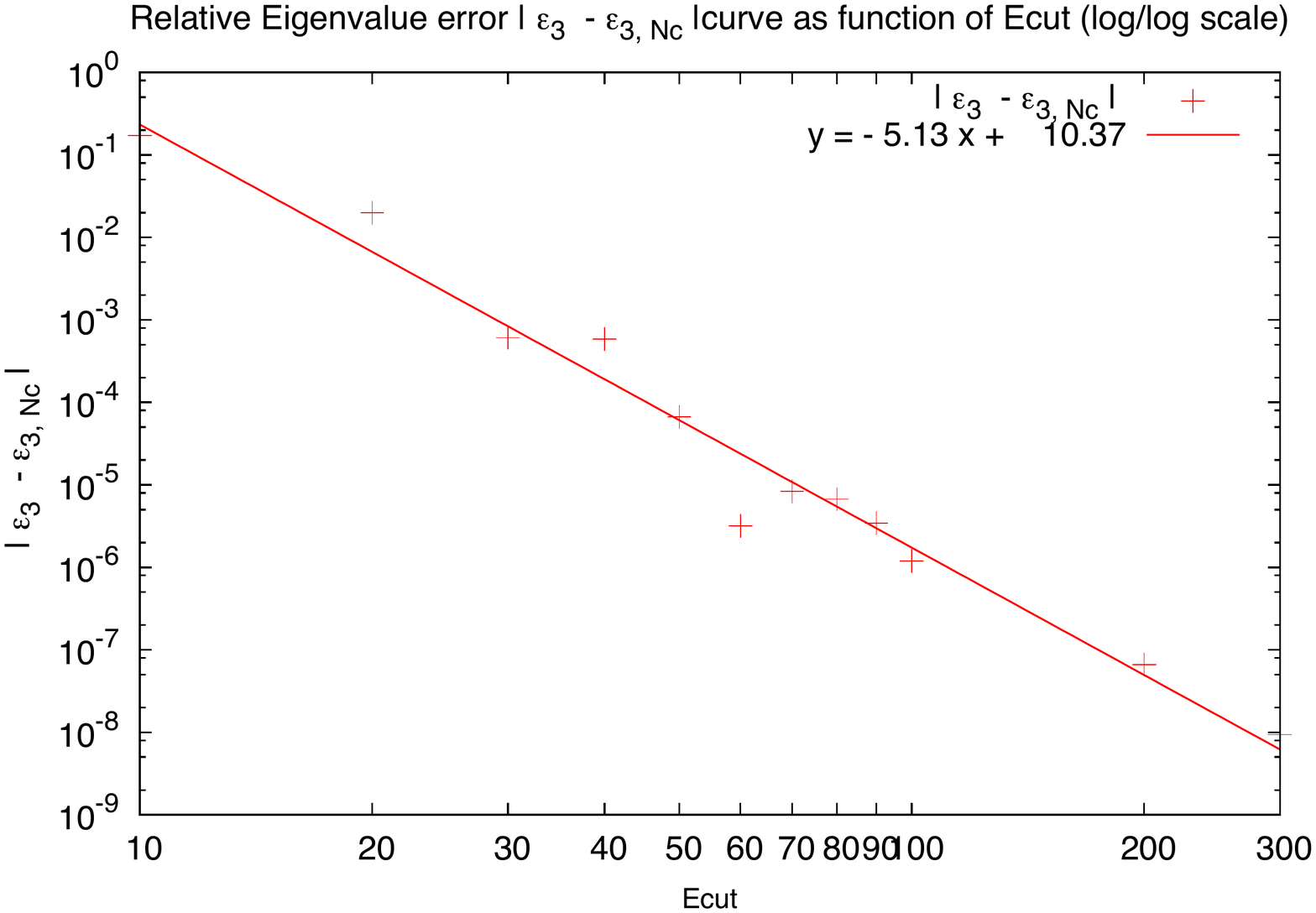}& 
\includegraphics[width=7cm]{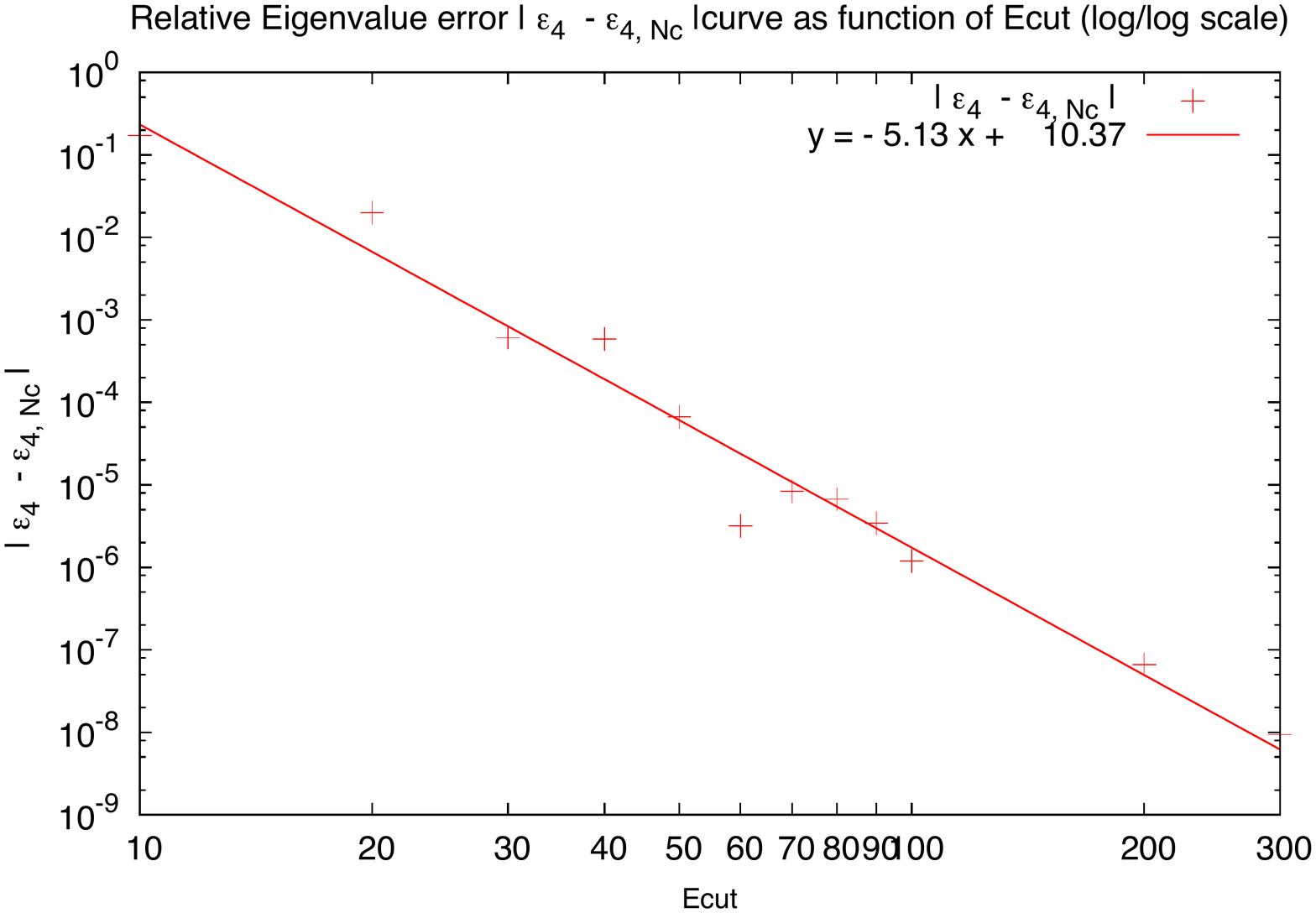}\\
\includegraphics[width=7cm]{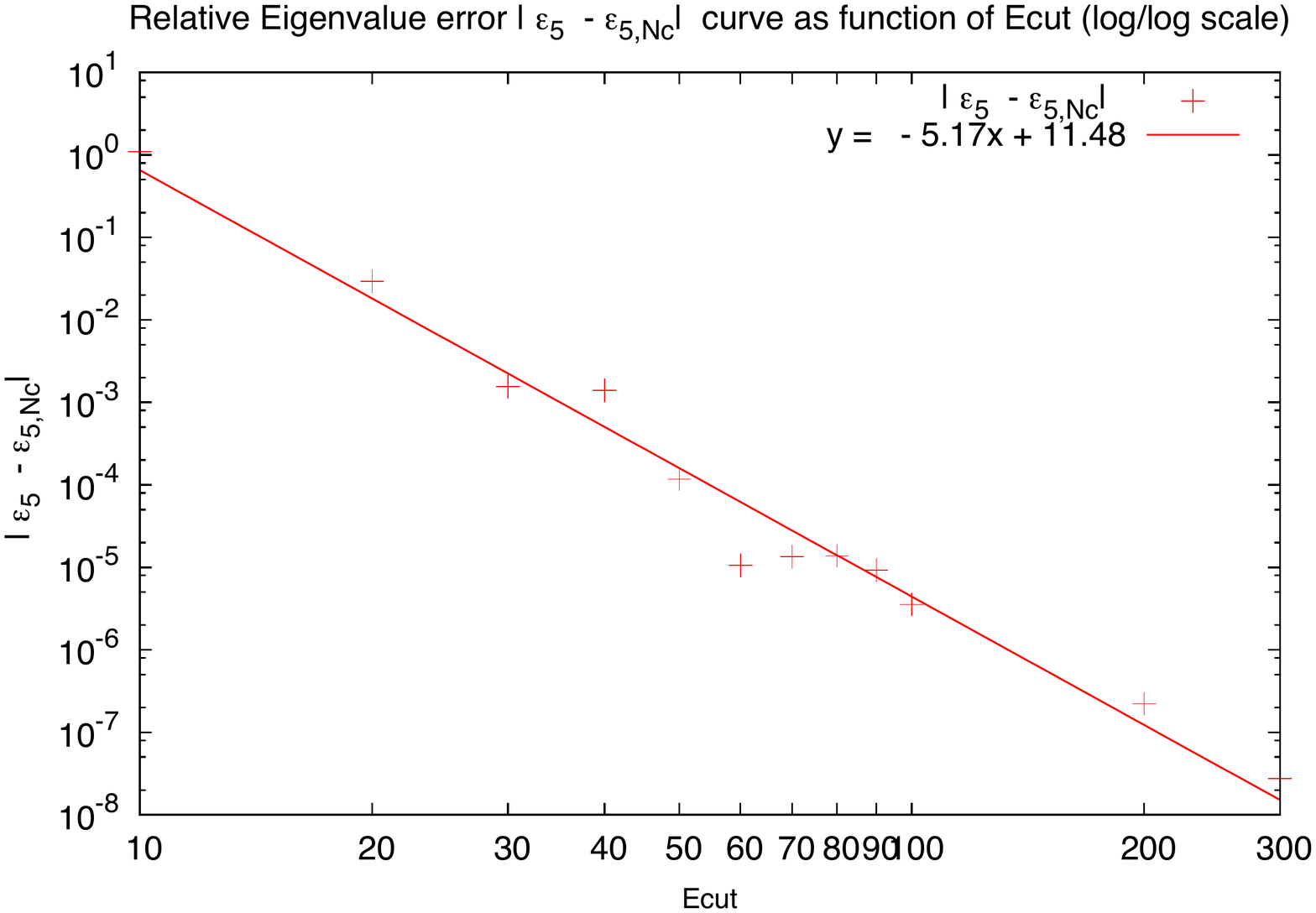}& 
\includegraphics[width=7cm]{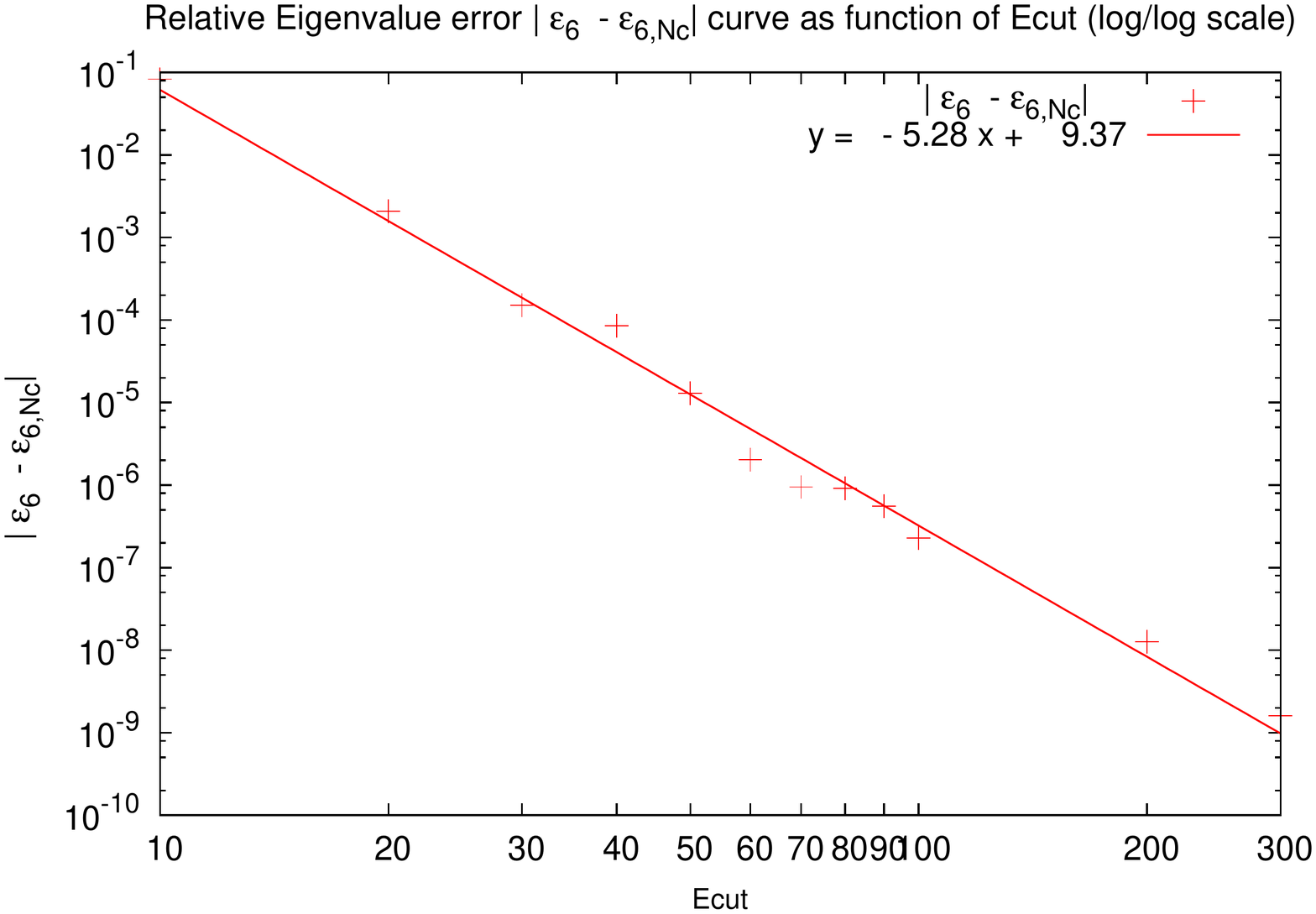} 
\end{tabular}	 
\caption{Errors on $|\epsilon_{i,N_c}^0-\epsilon_i^0| $  as functions of $E_{\rm c}$ for N$_2$}\label{FigN2b}
 \end{figure}

\newpage

\begin{figure}[h!] 
\begin{tabular}{c  c } \\
\includegraphics[width=7cm]{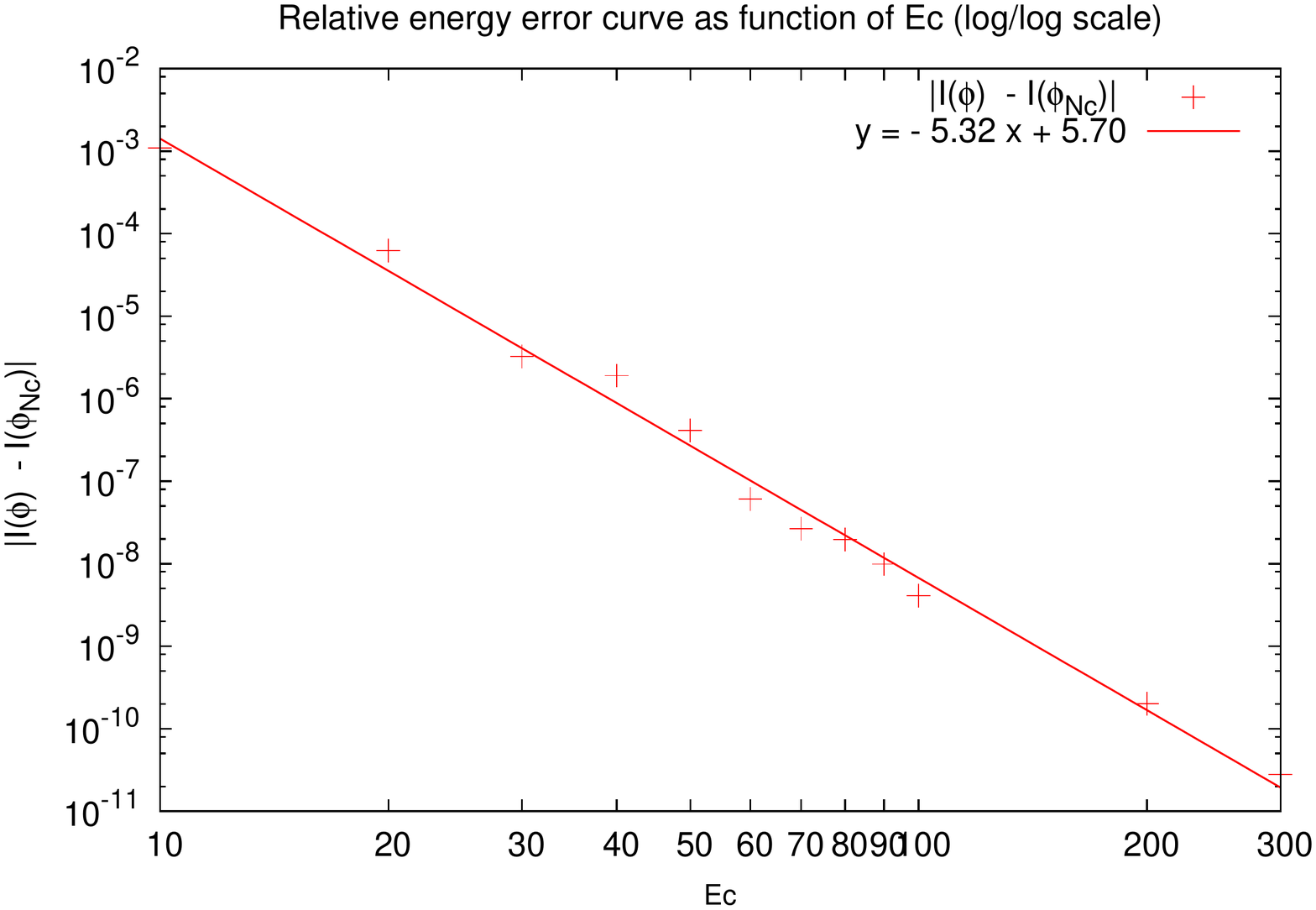}
\end{tabular}	 
\caption{Error on the energy as a function of $E_{\rm c}$  for N$_2$}\label{FigN2}
 \end{figure}

 These results are in good agreement with the {\it a priori} error estimates (\ref{estimationResNum1})-(\ref{estimationResNum2}) for both the H$_2$ and N$_2$ molecules

\section*{Acknowledgements} This work was done while E.C.
was visiting the Division of Applied Mathematics of Brown
University, whose support is gratefully acknowledged. 
This work was also partially supported by the ANR grant LN3M. We are also grateful to V.~Ehrlacher for her useful comments on a preliminary draft of this article.

\end{document}